\tikzset{shorten <>/.style={shorten >=#1,shorten <=#1}}
\newtheorem{thm}{Theorem}
\newenvironment{customthm}[1]
  {\innercustomthm}
  {\endinnercustomthm}
\numberwithin{thm}{section}
\newtheorem{prop}[thm]{Proposition}
\newtheorem{lem}[thm]{Lemma}
\newtheorem{cor}[thm]{Corollary}
\theoremstyle{definition}
\newtheorem{df}[thm]{Definition}
\newtheorem{ex}[thm]{Example}
\theoremstyle{remark}
\newtheorem{rk}[thm]{Remark}
\DeclareMathOperator{\Pic}{Pic}
\DeclareMathOperator{\Ann}{Ann}
\DeclareMathOperator{\ExtAP}{Ext_f(A,P)}
\DeclareMathOperator{\ExtAM}{Ext_f(\A,\M)}
\DeclareMathOperator{\CAP}{Z^2(A,P)}
\DeclareMathOperator{\CAM}{Z^2(\A,\M)}
\DeclareMathOperator{\HAM}{H^2(\A,\M)}
\DeclareMathOperator{\CAMS}{\mathcal{Z}^2(\A,\M)}
\DeclareMathOperator{\Fit}{Fitt}
\DeclareMathOperator{\Map}{Map}
\DeclareMathOperator{\Tr}{Tr}
\DeclareMathOperator{\HOM}{\mathscr{H}\text{\normalfont \kern -2.5pt {\emph{om}}}}
\DeclareMathOperator{\AUT}{\mathscr{A}\text{\normalfont \kern -2.5pt {\emph{ut}}}}
\DeclareMathOperator{\END}{\mathcal{E}\text{\kern -1.5pt {\emph{nd}}}}
\DeclareMathOperator{\DIV}{\mathscr{D}\text{\normalfont \kern -1pt {\emph{iv}}}}
\DeclareMathOperator{\WDIV}{\mathscr{WD}\text{\normalfont \kern -1pt {\emph{iv}}}}
\DeclareMathOperator{\PIC}{\mathscr{P}\text{\normalfont \kern -1pt {\emph{ic}}}}
\DeclareMathOperator{\PAR}{\mathscr{P}\text{\kern -1pt {\emph{ar}}}}
\DeclareMathOperator{\EFVECT}{\mathscr{EFV}\text{\kern -1pt {\emph{ect}}}}
\DeclareMathOperator{\STCOV}{\mathscr{S}\text{\normalfont \kern -1pt {\emph{t}}}\mathscr{C}\text{\normalfont \kern -1pt {\emph{ov}}}}
\DeclareMathOperator{\STDATA}{\mathscr{S}\text{\normalfont \kern -1pt {\emph{t}}}\mathscr{D}\text{\normalfont \kern -1pt {\emph{ata}}}}
\DeclareMathOperator{\QCOH}{\mathscr{QC}\text{\normalfont \kern -1pt {\emph{oh}}}}
\DeclareMathOperator{\Div}{Div}
\DeclareMathOperator{\EFVect}{EFVect}
\DeclareMathOperator{\Vect}{Vect}
\DeclareMathOperator{\WDiv}{WDiv}
\DeclareMathOperator{\NGG}{\mathbb{N}^{\GG}}
\DeclareMathOperator{\NGxGo}{\mathbb{N}^A/\langle e_0\rangle\times\mathbb{N}^A/\langle e_0\rangle}
\DeclareMathOperator{\ord}{\normalfont{\text{ord}}}
\newcommand{\mmu}{\pmb{\mu}}
\newcommand{\llambda}{\pmb{\lambda}}
\newcommand{\aalpha}{\pmb{\alpha}}
\newcommand{\rrho}{\pmb{\rho}}
\newcommand{\ggamma}{\pmb{\gamma}}
\newcommand{\PA}{\mathscr{P}_{\mathcal{A}}}
\newcommand{\QA}{\mathscr{Q}_{\mathcal{A}}}
\newcommand{\id}{\operatorname{id}}
\newcommand{\Hom}{\normalfont\text{Hom}}
\newcommand{\uuu}{\boldsymbol{\normalfont{u}}}
\newcommand{\sss}{\boldsymbol{\normalfont{s}}}
\newcommand{\aaa}{\boldsymbol{\normalfont{a}}}
\newcommand{\QQ}{\mathbb{Q}}
\newcommand{\GG}{A\times A}
\newcommand{\F}{\mathcal{F}}
\newcommand{\Gm}{\mathbb{G}}
\newcommand{\GmS}{\mathbb{G}_{m,S}}
\newcommand{\cyc}{\mbox{cyc}}
\newcommand{\Aut}{\normalfont{\mathsf{Aut}}}
\newcommand{\QCoh}{\normalfont{\mathsf{QCoh}}}
\newcommand{\X}{\mathscr{X}}
\newcommand{\Y}{\mathscr{Y}}
\newcommand{\NN}{\mathbb{N}}
\newcommand{\ZZ}{\mathbb{Z}}
\newcommand{\FF}{\mathbb{F}}
\newcommand{\J}{\mathcal{J}}
\newcommand{\I}{\mathcal{I}}
\newcommand{\Hj}{\mathcal{H}}
\newcommand{\GI}{{\mathcal{A}_{\mathscr{X}}}}
\newcommand{\A}{\mathcal{A}}
\newcommand{\PP}{\mathcal{P}}
\newcommand{\Q}{\mathcal{Q}}
\newcommand{\Li}{\mathcal{L}}
\newcommand{\Ei}{\mathcal{E}}
\newcommand{\SEt}{S_{{\normalfont\textsf{ét}}}}
\newcommand{\lala}{{\lambda, \lambda'}}
\newcommand{\et}{{\normalfont \textsf{ét}}}
\newcommand{\op}{{\normalfont \textsf{op}}}
\newcommand{\oj}{\mathcal{O}}
\newcommand{\C}{\mathcal{C}}
\newcommand{\Cov}{\mbox{D-}\mathscr{C}\kern -1.5pt ov}
\newcommand{\Ab}{\normalfont( \textsf{Ab})}
\newcommand{\Sch}{{\normalfont\textsf{Sch}}}
\newcommand{\m}{\mathfrak{m}}
\newcommand{\spec}{{\normalfont\text{Spec}}\, }
\newcommand{\stab}{{\normalfont\text{Stab}}}
\newcommand{\proj}{{\rm Proj}\,}
\newcommand{\M}{\mathcal{M}}
\newcommand{\N}{\mathbb{N}}
\newcommand{\CC}{\mathbb{C}}
\newcommand{\PPP}{\mathbb{P}}
\newcommand{\Af}{\mathbb{A}}
\let\amsamp=&
\gdef\pampmatrix{%
  \begingroup
  \let&=\amsamp
  \begin{pmatrix}%
}
\gdef\endpampmatrix{\end{pmatrix}\endgroup}
\newcommand{\pushright}[1]{\ifmeasuring@#1\else\omit\hfill$\displaystyle#1$\fi\ignorespaces}
\newcommand{\pushleft}[1]{\ifmeasuring@#1\else\omit$\displaystylonele#1$\hfill\fi\ignorespaces}
\title{{Building Data for Stacky Covers}}
\author{Eric Ahlqvist}
\address{School of Mathematics, University of Edinburgh, James Clerk Maxwell Building, Peter Guthrie Tait Road, Edinburgh, EH9 3FD, United Kingdom}
\email{p.e.l.ahlqvist@gmail.com, eric.ahlqvist@ed.ac.uk}
\begin{document}
\selectlanguage{english}
\begin{abstract}
We define \emph{stacky building data} for \emph{stacky covers} in the spirit of Pardini and give an equivalence of (2,1)-categories between the category of stacky covers and the category of stacky building data. We show that every stacky cover is a flat root stack in the sense of Olsson and Borne--Vistoli and give an intrinsic description of it as a root stack using stacky building data.
When the base scheme $S$ is defined over a field, we give a criterion for when a \emph{birational} building datum comes from a tamely ramified cover for a finite abelian group scheme, generalizing a result of Biswas--Borne.
\end{abstract}

\thanks{The author was supported by the Swedish Research Council 2015-05554 and the Knut and Alice Wallenberg Foundation 2021.0279.}

\maketitle

\setcounter{tocdepth}{1}
\tableofcontents

\section{Introduction}
The class of \emph{stacky covers} contains flat (classical) root stacks and flat stacky modifications in the sense of \cite{Rydh-comp}.
Root stacks first appeared in \cite{Matsuki--Olsson}, \cite{Abramovich--Graber--Vistoli}, and \cite{Cadman}. It was used by Abramovich--Graber--Vistoli in \cite{Abramovich--Graber--Vistoli} to define Gromov--Witten theory of Deligne--Mumford stacks and by Cadman--Chen \cite{Cadman--Chen} when counting rational plane curves tangent to a smooth cubic. Root stacks may also be used in birational geometry. For instance, Matsuki--Olsson used root stacks in the logarithmic setting to interpret the Kawamata--Viehweg vanishing theorem as an application of Kodaira vanishing for stacks \cite{Matsuki--Olsson}. Root stacks and stacky modifications where also used by Rydh in \cite{Rydh-comp} to prove compactification results for tame Deligne--Mumford stacks and by Bergh in \cite{Bergh} when constructing a functorial destackification algorithm for tame stacks with diagonalizable stabilizers. Bergh--Rydh also extended the latter result to remove the assumption that stabilizers are diagonalizable \cite{Bergh--Rydh}. The aim of this paper is to shed more light on these constructions in the flat case. We will do so by classifying stacky covers in terms of \emph{stacky building data} à la Pardini \cite{Pardini}.

A \emph{stacky cover} $\pi\colon \X\to S$ of a scheme $S$ consists of a tame stack $\X$ which has finite diagonalizable stabilizers at geometric points, together with a morphism $\pi\colon \X\to S$ which is
\begin{enumerate}
  \item flat, proper, of finite presentation,
  \item a coarse moduli space, and
  \item for any morphism of schemes $T\to S$, the base change $\pi|_T\colon \X_T\to T$ has the property that $(\pi|_T)_*$ takes invertible sheaves to invertible sheaves.
\end{enumerate}
For instance, if $\X\to S$ is a \emph{flat stacky modification}, that is, flat, proper, locally of finite presentation, and birational with finite diagonalizable stabilizers, then $\X\to S$ is a stacky cover. We prove a classification of stacky covers in terms of \emph{stacky building data}. To a stacky cover $\pi\colon \X\to S$ we associate an \'etale sheaf of abelian groups $\A$ over $S$ and construct a \emph{2-cocycle}
  \[
    f_\X\colon \A\times\A\to \DIV_S:=[\Af^1_S/\Gm_{m,S}]\,.
  \]
We show that $\X$ may be thought of as the stack parametrizing \emph{1-cochains with boundary $f_\X$}.

From $\A$ we construct two quasi-fine \'etale sheaves of monoids $P_\A$, $Q_\A$, and a flat Kummer homomorphism $\gamma_\A\colon P_\A\to Q_\A$. To the 2-cocycle $f_\X$ we associate a symmetric monoidal functor
$\Li\colon P_\A\to \DIV_{S_\et}$ where $\DIV_{S_\et}$ denotes the restriction of $[\Af^1_S/\Gm_{m,S}]$ to the small \'etale site of $S$. Hence we get a diagram
  \begin{equation}\label{eq:DF-datum}
    \begin{tikzcd} 
      P_\A \ar{r}{\Li} \ar{d}[swap]{\gamma_\A} & {\DIV_{S_\et}} \\ Q_\A & 
    \end{tikzcd}
  \end{equation}
and we refer to this as a \emph{Deligne--Faltings datum}.
We denote by $S_{(\A,\Li)}$ the associated root stack.
The main results are the following:

\begin{customthm}{\ref{thm:main}}
Let $\pi\colon \X\to S$ be a stacky cover. Then there exists a canonical (up to canonical isomorphism) \emph{building datum} $(\A,\Li)$ where $\A=\Pic_{\X/S}$ is the relative picard functor, and a canonical isomorphism of stacks \[\X\to S_{(\A,\Li)}\] where $S_{(\A,\Li)}$ is the root stack associated to the building datum $(\A,\Li)$.
\end{customthm}

\begin{customthm}{\ref{thm:build}}
We have an equivalence of (2,1)-categories \[\STCOV\simeq \STDATA\] between the category of stacky covers and the category of stacky building data.
\end{customthm}

A stacky cover will \'{e}tale locally on $S$ look like a quotient stack of a ramified Galois cover $X$ for a diagonalizable group $D(A)$, where $A$ is a finite abelian group. Such covers have been studied for example in \cite{Pardini} (Galois covers that are generically torsors) and \cite{Tonini} (general setting) and can be described combinatorially by giving a line bundle $\Li_\lambda$ for each $\lambda\in A$ together with global sections $s_{\lambda,\lambda'}\in \Gamma(S,\Li_\lambda^{-1}\otimes\Li^{-1}_{\lambda'}\otimes\Li_{\lambda+\lambda'})$ corresponding to the multiplication in $\oj_X$. These data are required to satisfy the appropriate axioms to constitute an associative and commutative algebra (see Remark \ref{rk:lin-bun}). This suggests that the quotient stack
$[X/D(A)]$
can also be described in a combinatorial way using line bundles and sections, or more precisely, as a root stack. Using constructions in \cite{Tonini} we show that the group $A$ gives rise to two (constant) quasi-fine and sharp monoids $P_A$ and $Q_A$ with a flat Kummer homomorphism between them.
These sit in an exact sequence of monoids $0\to P_A\to Q_A\to A\to 0$ which is the \emph{universal free extension} of $A$.
From the data of the cover $X$ we can then construct a symmetric monoidal functor $\Li_X\colon P_A\to \DIV_{S_\et}$ such that the root stack of the diagram
\[\begin{tikzcd} P_A \ar{r}{\Li_X} \ar{d}[swap]{\gamma_A} & {\DIV_{S_\et}} \\ Q_A & \end{tikzcd}\]
(compare with Diagram (\ref{eq:DF-datum})) is isomorphic to $[X/D(A)]$.

We will use root stacks in the language of Deligne--Faltings structures as in \cite{Borne-Vistoli}. The monoids and the symmetric monoidal functor in the main theorem are constructed intrinsically on $\X$ using the relative Picard functor $\Pic_{\X/S}$. When $\X$ is Deligne--Mumford then $\pi^*\Pic_{\X/S}\cong D(\I_\X)$, where $D(\I_\X)$ is the Cartier dual of the inertia stack.

The category of quasi-coherent sheaves on a root stack $\X/S$ associated to a Deligne--Faltings datum $(P,Q,\Li)$ is equivalent to the category of \emph{parabolic sheaves} on $S$ with respect to $(P,Q,\Li)$. When $S$ is defined over a field and is geometrically reduced and geometrically connected, we give a criterion for when a \emph{birational} building datum comes from a ramified $G$-cover (Definition \ref{df:ram-cov}), for some finite abelian group scheme $G$ over $k$, generalizing the main result of \cite{Biswas-Borne}. Our theorem looks as follows:

\begin{customthm}{\ref{thm:appl-main}}
Let $S$ be a scheme proper over a field $k$ and assume that $S$ is geometrically connected and geometrically reduced. Let $(\A,\Li)$ be a birational building datum and $(P_\A,Q_\A,\Li)$ the associated Deligne--Faltings datum.
Then the following are equivalent:
\begin{enumerate}
  \item There exists a finite abelian group scheme $G$ over $k$ and a ramified $G$-cover $X\to S$ with birational building datum $(\A,\Li)$;
  \item For every geometric point $\bar{s}$ in the branch locus, we have that
  \begin{enumerate}
    \item[(i)] the map $\Gamma(S,\A)\to \A_{\bar{s}}$ is surjective, and
    \item[(ii)] for every $\lambda\in \A_{\bar{s}}$, there exists an essentially finite, basic, parabolic vector bundle $(E,\rho)$ on $(S,P_\A,Q_\A,\Li)$ such that the morphism
    \[\bigoplus_{\lambda'}E_{e_\lambda-e_{\lambda'}}|_{\bar{s}}\xrightarrow{(E(e_{\lambda'})|_{\bar{s}})_{\lambda'}} E_{e_\lambda}|_{\bar{s}}\] is not surjective, where the direct sum is over all $\lambda'\in \Gamma(S,\A)$ such that $\lambda'_{\bar{s}}\neq 0$.
  \end{enumerate}
\end{enumerate}
\end{customthm}

Suppose that there exists a finite abelian group scheme $G$ over $k$ and a ramified $G$-cover $X\to S$ with ramification datum $(\A,\Li)$ as in Theorem \ref{thm:appl-main}. If $\X=S_{(\A,\Li)}=S_{(P_\A,Q_\A,\Li)}$ is the associated root stack then it follows that $\X\simeq [X/G]$.

\subsection*{Organization}
In Section \ref{sec:cov} we study the theory of ramified Galois covers for diagonalizable group schemes. The reader who is familiar with ramified covers may skip ahead. 

In Section \ref{sec:DF} we recall the theory developed in \cite{Borne-Vistoli} involving Deligne--Faltings structures and root stacks. We also review some properties of monoids and symmetric monoidal categories. In the end we investigate what it means for a root stack to be flat in terms of the monoids defining it.

In Section \ref{sec:sp-DF-data} we define and prove statements about the universal monoids $P_A$ and $Q_A$ that we use use to model the local charts of our stacky covers.
For instance, we show that the monoids $P_A$ and $Q_A$ associated to $A$ are quasi-fine and sharp and that the action of $P_A$ on $Q_A$ is free. When $A$ is an abelian group and $P$ a monoid, we identify \emph{free extensions of $A$ by $P$} and \emph{2-cocycles of $A$ with values in $P$}. We show that there is a universal 2-cocycle $A\times A\to P_A$ corresponding to a universal free extension $0\to P_A\to Q_A\to A\to 0$.

In Section \ref{sec:2-coc-in-sym-mon} we generalize the theory developed in Section \ref{sec:sp-DF-data} to the setting where $P$ and $Q$ are replaced by symmetric monoidal categories. We define the stack parametrizing 1-cocycles with a fixed coboundary. 

In Section \ref{sec:DF-from-cov} we look at the local structure of the stacks considered in this paper and show that to every ramified $D(A)$-cover $X\to S$ we may associate a Deligne--Faltings datum $(P_A,Q_A,\Li_X)$ and that the root stack $S_{(P_A,Q_A,\Li_X)}$ is isomorphic to $[X/D(A)]$.

In Section \ref{sec:birat} we show how to realize a stacky cover as a root stack using the relative Picard functor.

In Section \ref{sec:building-data} we define \emph{stacky building data} and show that there is an equivalence of (2,1)-categories between the category of stacky covers and the category of stacky building data.

In Section \ref{sec:application} we generalize the result of Biswas--Borne and give a criterion for when building data on a scheme over a field comes from a ramified abelian cover.

In Appendix \ref{sec:groups} we study the Cartier dual $D(H)$ of a non-flat closed subgroup $H$ of a multiplicative group. 

We use this in Section \ref{sec:birat} to show that $\pi_*D(\I_\X)\cong \Pic_{\X/S}$ when $\X$ is Deligne--Mumford.

\subsection*{Notation and conventions}
The letter $S$ will always denote the base scheme which we assume to be locally Noetherian.
The letter $A$ will always denote an abelian group. If $A$ is an abelian group, $\lambda\in A$, and $B$ is an $A$-graded $R$-module, then $B[\lambda]$ is the $A$-graded $R$-module with \[B[\lambda]_{\lambda'}=B_{\lambda+\lambda'}\,.\]

When $\X$ is a stack and $f\colon T\to \X$ is an object, we write $\stab(T)$ or $\stab(f)$ for the sheaf of groups $\underline{\Aut}_f$.
The unit object of a symmetric monoidal category will be denoted by $\mathbbm{1}$.

\subsection*{Acknowledgements}
I want to thank my PhD supervisor David Rydh for many invaluable discussions and his great enthusiasm for the subject. I would like to thank Niels Borne, Magnus Carlson, Martin Olsson, Fabio Tonini, and Angelo Vistoli for useful discussions. 
I would also like to thank the anonymous referee for many good suggestions for improvement.

\section{Ramified covers}\label{sec:cov}

Throughout this section we will always assume that $A$ is an abelian group and $G=D(A)$ the corresponding finite diagonalizable group over the base scheme $S$. This means that \[\begin{split}G&=D_S(A)\\ &=\HOM_{grp}(A_S,\Gm_m)\\ &\cong \spec\oj_S[A]\,.\end{split}\] We also write \[\oj_S[G]=\oj_S[A]\] for the group algebra of $A$ over $\oj_S$.

\begin{df}[{{\cite[Definition 2.1]{Tonini}}}]\label{def:cover}
Let $G\to S$ be a finite flat diagonalizable group scheme of finite presentation. A \emph{G-cover over} $S$ is a finite locally free morphism $f\colon X\to S$ together with an action of $G$ such that there exists an fppf cover $\{U_i\to S\}$ and an isomorphism of $\oj_{U_i}[G]$-comodules \[(f_*\oj_X)|_{U_i}\cong \oj_{U_i}[G]\,,\] where the comodule structure on the right hand side is the regular representation.
\end{df}

\begin{rk}\label{rk:lin-bun}
This means that we have a splitting \[f_*\oj_X\cong \bigoplus_{\lambda\in A}\Li_\lambda\] where each $\Li_\lambda$ is a line bundle and $\Li_0=\oj_S$. We also have multiplication morphisms
\[\Li_\lambda\otimes \Li_{\lambda'}\to \Li_{\lambda+\lambda'}\]
which we think of as global sections \[s_{\lambda,\lambda'}\in \Li^{-1}_\lambda\otimes\Li^{-1}_{\lambda'}\otimes\Li_{\lambda+\lambda'}\,.\] These global sections will have the following properties:
\begin{enumerate}
\item $s_{0,\lambda}=1\quad$ $\forall \lambda\in A$;
\item $s_{\lambda,\lambda'}=s_{\lambda',\lambda}\quad$ $\forall \lambda,\lambda'\in A$;
\item $s_{\lambda,\lambda'}s_{\lambda+\lambda',\lambda''}=s_{\lambda',\lambda''}s_{\lambda'+\lambda'',\lambda}\quad$ $\forall \lambda,\lambda',\lambda''\in A$.
\end{enumerate}
Note that equality $a=b$ here means that the element $a$ is sent to $b$ under the canonical isomorphism of line bundles. For instance, $s_{\lambda,\lambda'}=s_{\lambda',\lambda}$ means that $s_{\lambda,\lambda'}\mapsto s_{\lambda',\lambda}$ under the canonical isomorphism $\Li^{-1}_\lambda\otimes\Li^{-1}_{\lambda'}\otimes\Li_{\lambda+\lambda'}\cong \Li^{-1}_{\lambda'}\otimes\Li^{-1}_{\lambda}\otimes\Li_{\lambda+\lambda'}$.
\end{rk}

\begin{df}
Let $S$ be a scheme. A \emph{generalized effective Cartier divisor} is a pair $(\Li,s)$ consisting of
\begin{enumerate}
  \item a line bundle $\Li$ on $S$ and
  \item a global section $s\in \Gamma(S,\Li)$.
\end{enumerate}
\end{df}

\begin{rk}
Note that each pair $(\Li^{-1}_\lambda\otimes\Li^{-1}_{\lambda'}\otimes\Li_{\lambda+\lambda'}, s_{\lala})$ forms a generalized effective Cartier divisor.
Note that the data of a generalized effective Cartier divisor $(\Li,s)$ is equivalent to the data of a morphism of stacks $S\to [\Af^1/\Gm_{m}]$.
\end{rk}

\begin{rk}
By Remark \ref{rk:lin-bun} we may replace the fppf cover in Definition \ref{def:cover} by a \emph{Zariski} cover. This is however not always possible if one allows non-diagonalizable group schemes as in \cite[Definition 2.1.2]{Tonini-thesis}.
\end{rk}

\begin{rk}
The ramification locus of a ramified cover which is generically a torsor has pure codimension~1 \cite[Theorem 6.8]{Altman-Kleiman}.
\end{rk}

\begin{rk}\label{rk:cov2}
Any finite locally free morphism $f\colon X\to S$ of rank 2 is a $\mu_2$-cover if $2$ is invertible in $\Gamma(S,\oj_S)$. Indeed, there is a trace map $T\colon f_*\oj_X\to \oj_S$ sending a section $x$ to the trace of the matrix corresponding to multiplication by $x$. The composition $\oj_S\to f_*\oj_X\to\oj_S$ is multiplication by 2 and if 2 is invertible, we get that \[\oj_S\to f_*\oj_X\xrightarrow{\frac{1}{2}T}\oj_S\] is the identity and hence
\[f_*\oj_X\cong \oj_S\oplus \Li\,,\] where $\Li=\ker T$ is a line bundle.
It remains to show that the multiplication $\Li\otimes\Li\to f_*\oj_X$ lands in $\oj_S$. This can be checked on stalks so we may assume that $\Li$ is trivial. Take $x\in\Gamma(s,\Li)$. Then multiplication by $x$ is given by a $2\times2$-matrix
\[\begin{pmatrix}
0 & b \\ x & d
\end{pmatrix}\] since the multiplication $\oj_S\otimes \Li\to f_*\oj_X$ is just the module action, and hence lands in $\Li$. But $\Li=\ker T$ and hence $d=0$. Hence we conclude that $X\to S$ is a $\mu_2$-cover.
\end{rk}


\begin{ex} Here is a list of examples of ramified covers.
\begin{enumerate}
\item The map on spectra induced by the inclusion $\ZZ\to \ZZ[x]/(x^2-2)$ is a $\mu_2$-cover when $x$ has weight 1.
\item The map on spectra induced by the inclusion $\mathbb{C}[s]\to \mathbb{C}[s,x,y]/(x^2-sy,y^2-sx,xy-s^2)$ is a $\mu_3$-cover when $x$ has weight 1 and $y$ has weight 2.
\item The map $\proj \mathbb{C}[x,y,z]/(z^2-x^2)\to \mathbb{P}^1_{\mathbb{C}}=S$ induced by the inclusion $\mathbb{C}[x,y]\to \mathbb{C}[x,y,z]/(x^2-z^2)$ is a $\mu_2$-cover when $z$ has weight 1.
We have $\proj \mathbb{C}[x,y,z]/(z^2-x^2)\cong \spec (\oj_S\oplus\oj_S(-1))$ with multiplication given by $x^2\in \Gamma(S,\oj_S(2))$.
\item In view of Remark \ref{rk:cov2}, any degree 2 finite surjective morphism of varieties $X\to S$ over an algebraically closed field, where $X$ is Cohen--Macaulay and $S$ is regular, is a $\mu_2$-cover (flatness follows from \cite[Corollary 18.17]{Eisenbud-CA}).
\item In particular, a K3 surface obtained as a double cover of $\PPP^2$ branched along a sextic is a $\mu_2$-cover.
\end{enumerate}
\end{ex}

See also Example \ref{ex:non-example} for a non-example, which could be mistaken for a $G$-cover in the sense of Definition \ref{def:cover}.

\subsection*{Covers and 2-cocycles}
\begin{df}[\cite{Patchkoria_1977_1, Patchkoria_1977_3, Patchkoria_2018}]\label{df:coc}
Let $A$ be an abelian group. A \emph{(commutative) 2-cocycle} of $A$ with values in a monoid $P$ is a function
\[
f\colon A\times A\to P
\]
that satisfies the following properties:
\begin{enumerate}
\item $f(0,\lambda)=0\quad$ $\forall \lambda\in A$;
\item $f(\lambda,\lambda')=f(\lambda',\lambda)\quad$ $\forall \lambda,\lambda'\in A$;
\item $f(\lambda,\lambda')+f(\lambda+\lambda',\lambda'')=f(\lambda',\lambda'')+f(\lambda'+\lambda'',\lambda)\quad$ $\forall \lambda,\lambda',\lambda''\in A$.
\end{enumerate}
\end{df}

\begin{rk}
Note that the set of 2-cocycles of $A\times A\to P$ form a monoid under pointwise addition.
\end{rk}

\begin{df}\label{def:cap}
We denote the monoid of 2-cocycles $A\times A\to P$ by $\CAP$.
\end{df}

\begin{rk}
There is a bijection between the set of 2-cocycles $A\times A\to \N$ and the set $\Hom(P_A,\mathbb{N})$ of \emph{rays} as in \cite[Notation 3.11]{Tonini}, where $P_A$ is the universal monoid we define in Definition \ref{df:P}. This will be explained in detail in Section \ref{sec:sp-DF-data}.
\end{rk}

Recall that if $\Li$ is a line bundle on a scheme $S$ then we have a bijection 
  \[
    \Bigg\{s\in\Gamma(S,\Li): s\mbox{ is regular}\Bigg\}\Big/\vspace{-2pt}\sim\ \to 
    \left\{
      \begin{split}
        & \mbox{effective Cartier divisors } \\ 
        & D \mbox{ such that }\oj_S(D)\cong \Li
      \end{split} 
    \right\}\,,
  \]
where $s\sim s'$ if there is a unit $u\in \Gamma(S,\oj_S^\times)$ such that $us=s'$.

Let $S$ be a \emph{normal} scheme and let $X\to S$ be a $D(A)$-cover with branch locus $B=\bigcup_{i\in I}D_i$ (union of irreducible components with $I$ finite), which is generically a torsor. Then the global sections \[s_{\lambda,\lambda'}\in\Gamma(S,\Li_\lambda^{-1}\otimes \Li_{\lambda'}^{-1}\otimes \Li_{\lambda+\lambda'})\] are all regular and correspond to divisors $D_{\lambda,\lambda'}$ such that \[\oj_S(D_{\lambda,\lambda'})\cong \Li_\lambda^{-1}\otimes \Li_{\lambda'}^{-1}\otimes \Li_{\lambda+\lambda'}\,.\]
Hence we get a 2-cocycle \[\begin{split}f_X\colon A\times A & \to \mathbb{N}^I \\ (\lambda,\lambda') & \mapsto \mbox{ord}_{D_i}(D_{\lambda,\lambda'})\,,\end{split}\] which we refer to as the \emph{2-cocycle} of the cover $X$. The cover $X$ is determined by $f_X$ (see Proposition \ref{prop:structure}).

If $X=X_1\, \tensor*[^{}_{\varphi_1}]{\vee}{^{}_{\varphi_2}} X_2$ then \[f_X(\lambda,\lambda')=f_{X_1}(\varphi_1(\lambda),\varphi_1(\lambda'))+f_{X_2}(\varphi_2(\lambda),\varphi_2(\lambda'))\,.\]

%
%

\subsection*{Structure of $D(A)$-covers over a normal scheme}
Let $p\colon X\to S$ be a $D(A)$-cover with multiplication in $p_*\oj_X$ given by \[s_{\lambda,\lambda'}\in \Gamma(S,\Li_\lambda^{-1}\otimes\Li_{\lambda'}^{-1}\otimes\Li_{\lambda+\lambda'})\,.\] Assume that the cover is \emph{generically a torsor}. Then the global sections $s_{\lambda,\lambda'}$ are all regular since they are generically isomorphisms.
If $S$ is normal then \[\cyc\colon \Div(S)\to \WDiv(S)\] is injective and hence $\cyc(\Li_\lambda^{-1}\otimes\Li_{\lambda'}^{-1}\otimes\Li_{\lambda+\lambda'},s_{\lambda,\lambda'})$
determines $(\Li_\lambda^{-1}\otimes\Li_{\lambda'}^{-1}\otimes\Li_{\lambda+\lambda'},s_{\lambda,\lambda'})$.

Let $C$ be an irreducible component of the branch locus $B$ and let $\spec \oj_{X,C}=X\times_S\spec \oj_{S,C}$. Then \[\spec \oj_{X,C}\to \spec \oj_{S,C}\] is an affine ramified $G$-cover. For each $\lambda\in A$ we let $v_\lambda$  be a generator for the graded piece of $\oj_{X,C}$ corresponding to $\lambda$ (the graded pieces are free since we are over a local ring).

\begin{prop}\label{prop:structure}
With the setup just described, we have an isomorphism \[\Li_\lambda^{-1}\otimes\Li_{\lambda'}^{-1}\otimes\Li_{\lambda+\lambda'}\cong \oj_S(\sum_{\substack{C\subseteq B \\ irred}}\ord_C(s_{\lambda,\lambda'})[C])\] sending $s_{\lambda,\lambda'}$ to the canonical global section. Let $s$ be a uniformizer of $\oj_{S,C}$. Then $\ord_C(s_{\lambda,\lambda'})$ can be determined by the formula
\[\ord_C(s_{\lambda,\lambda'})=\min\{n\in \mathbb{N}:s^n\in (v_\lambda v_{\lambda'}:v_{\lambda+\lambda'})\}\] where $(v_\lambda v_{\lambda'}:v_{\lambda+\lambda'})$ is the ideal quotient.
\end{prop}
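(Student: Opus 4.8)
The plan is to treat the two assertions separately: first the divisor formula for the line bundle $\Li_\lambda^{-1}\otimes\Li_{\lambda'}^{-1}\otimes\Li_{\lambda+\lambda'}$, and then the local computation of the integers $\ord_C(s_{\lambda,\lambda'})$.

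For the first assertion I would invoke the correspondence recalled just before the statement. Since the cover is generically a torsor, each $s_{\lambda,\lambda'}$ is a regular section, so it determines an effective Cartier divisor $D_{\lambda,\lambda'}$ with $\oj_S(D_{\lambda,\lambda'})\cong \Li_\lambda^{-1}\otimes\Li_{\lambda'}^{-1}\otimes\Li_{\lambda+\lambda'}$ and with $s_{\lambda,\lambda'}$ as canonical section. Because $S$ is normal, $\cyc\colon \Div(S)\to\WDiv(S)$ is injective, so $D_{\lambda,\lambda'}$ is recovered from its associated Weil divisor $\sum_C \ord_C(s_{\lambda,\lambda'})[C]$. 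To see that the sum ranges only over components of $B$, note that on $S\setminus B$ the cover is a torsor, hence every $s_{\lambda,\lambda'}$ is invertible there, forcing $\ord_C(s_{\lambda,\lambda'})=0$ for $C\not\subseteq B$. This yields the claimed isomorphism.

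For the second assertion I would localize at a codimension-one component $C$ of $B$, where $R=\oj_{S,C}$ is a DVR with uniformizer $s$. Each graded piece of $\oj_{X,C}$ is free of rank one over $R$ with generator $v_\lambda$, and the multiplication is encoded by structure constants $c_{\lambda,\lambda'}\in R$ via $v_\lambda v_{\lambda'}=c_{\lambda,\lambda'}v_{\lambda+\lambda'}$. Reading $s_{\lambda,\lambda'}$ in the induced basis $v_\lambda^{-1}\otimes v_{\lambda'}^{-1}\otimes v_{\lambda+\lambda'}$ identifies it with $c_{\lambda,\lambda'}$ times this basis vector, so that $\ord_C(s_{\lambda,\lambda'})=v_s(c_{\lambda,\lambda'})$, the valuation of the structure constant. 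It then remains to compute the ideal quotient in $\oj_{X,C}$. Since the cover is generically a torsor, every structure constant is generically a unit, hence nonzero; in particular multiplication by $v_{\lambda+\lambda'}$ on the free $R$-module $\oj_{X,C}$ is injective, i.e.\ $v_{\lambda+\lambda'}$ is a non-zero-divisor. Cancelling it in $v_\lambda v_{\lambda'}=c_{\lambda,\lambda'}v_{\lambda+\lambda'}$ gives $(v_\lambda v_{\lambda'}:v_{\lambda+\lambda'})=(c_{\lambda,\lambda'})$ as ideals of $\oj_{X,C}$. Writing $c_{\lambda,\lambda'}=u s^{m}$ with $u\in R^\times$ and $m=v_s(c_{\lambda,\lambda'})$, the quantity to evaluate becomes $\min\{n:s^n\in (s^m)\}$ in $\oj_{X,C}$. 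As $\oj_{X,C}$ is finite flat over the local ring $R$, the element $s$ is a non-zero-divisor (by flatness) and lies in the Jacobson radical (it belongs to $\m_R$ and $\oj_{X,C}$ is semilocal), so $s^n\in(s^m)$ holds if and only if $n\ge m$. Hence the minimum equals $m=\ord_C(s_{\lambda,\lambda'})$, as claimed.

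The main obstacle is the passage from the intrinsic order $\ord_C(s_{\lambda,\lambda'})$, which is computed on the base in the DVR $R$, to a divisibility statement inside the finite $R$-algebra $\oj_{X,C}$, which is in general neither reduced nor local but only semilocal. One must check both that $v_{\lambda+\lambda'}$ is a non-zero-divisor, so that the ideal quotient is genuinely principal, and that divisibility of $s^n$ by $s^m$ in $\oj_{X,C}$ still detects $n\ge m$; the flatness and semilocality of $\oj_{X,C}$ over $R$ are precisely what make the latter work, while the generically-a-torsor hypothesis is exactly what guarantees that each relevant $v_\mu$ is a non-zero-divisor.
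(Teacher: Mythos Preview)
Your proof is correct and follows essentially the same approach as the paper's: both parts match in structure, with the first reducing to the Cartier--Weil divisor correspondence and the second localizing at the DVR $\oj_{S,C}$ to read off the structure constant $c_{\lambda,\lambda'}=u\,s^{m}$. The only difference is in the justification of the final step: the paper simply writes down the presentation $\oj_{X,C}=\oj_{S,C}[\{v_\lambda\}]/(v_\lambda v_{\lambda'}-s^{\ord(s_{\lambda,\lambda'})}v_{\lambda+\lambda'})$ and invokes freeness over $\oj_{S,C}$ (so one can expand in the basis $\{v_\mu\}$ and compare coefficients), whereas you argue via the non-zero-divisor property of $v_{\lambda+\lambda'}$ and the Jacobson radical to reach the same conclusion. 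Your version is more explicit about why the ideal quotient computation in the non-local, possibly non-reduced ring $\oj_{X,C}$ still recovers the $R$-valuation; the paper leaves this implicit in the word ``free''.
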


\begin{proof}
The pair $(\Li_\lambda^{-1}\otimes\Li_{\lambda'}^{-1}\otimes\Li_{\lambda+\lambda'},s_{\lala})$ determines an effective Cartier divisor which in turn gives the Weil divisor
\[\sum_{\substack{C\subseteq B \\ irred}}\ord_C(s_{\lambda,\lambda'})[C]\]
since $s_{\lala}$ has support in $B$. This proves the first part.

To prove the second part we consider the cover $\spec\oj_{X,C}\to \spec\oj_{S,C}$. We have \[\oj_{X,C}=\oj_{S,C}[\{v_\lambda\}_{\lambda\in A}]/(\{v_\lambda v_{\lambda'}-s^{\ord(s_\lala)}v_{\lambda+\lambda'}\}_{(\lambda,\lambda')\in A^2})\,.\]
This proves the second part since $\oj_{X,C}$ is free over $\oj_{S,C}$.
\end{proof}

\begin{rk}\label{rk:hens}
Note that we could replace $\oj_{S,C}$ by its strict henselization \cite[\href{https://stacks.math.columbia.edu/tag/0AP3}{Tag 0AP3}]{stacks-project}.
\end{rk}

\begin{rk}
The function \[\begin{split}A\times A & \to \mathbb{N} \\ (\lambda,\lambda') & \mapsto \ord_C(s_{\lambda,\lambda'})\end{split}\] is a 2-cocycle.
\end{rk}

In \cite{Pardini}, Pardini gives an explicit description of Proposition \ref{prop:structure} in the case when $X$ is normal and $S$ is smooth over over an algebraically closed field $k$ whose characteristic does not divide $|A|$. In this setting, if $C$ is an irreducible component of the branch locus $B$, then the stabilizer group of a component in $p^{-1}(C)$ is always cyclic \cite[Lemma 1.1]{Pardini} (i.e., its group of characters is cyclic). For every such $C$, the corresponding stabilizer group $D(N)$ acts via some character $\psi\in N\cong D(D(N))$ (which generates $N$) on the cotangent space $\m_T/\m^2_T$, where $T$ is any component of $p^{-1}(C)$. The character $\psi$ is independent of the choice of $T$.
This means that to every component $C$ we may associate a cyclic group together with a generator.
Hence we may write \[B=\sum_{N}\sum_{\psi} D_{N,\psi}\,, \] where we sum over cyclic quotients $A\twoheadrightarrow N$ and generators $\psi\in N$.

Let $i\colon A\to \N$ be the dual of the inclusion $D(N)\to D(A)$ composed with the map $N\to \N$ defined by $x\mapsto \min\{a:\psi^a=x\}$. For $\lambda,\lambda'\in A$ (and $N$, $\psi$ as above), Pardini defines \[\varepsilon_{\lambda,\lambda'}^{N,\psi}=\begin{cases}0\,, & \mbox{if }i(\lambda)+i(\lambda')<|N|\,, \\ 1\,, & \mbox{otherwise}\,.\end{cases}\]
We have that $\varepsilon_{(-,-)}^{N,\psi}$ is a 2-cocycle and the following theorem is part of \cite[Theorem 2.1]{Pardini}.

\begin{thm}
Let $C$ be a component with cyclic group $N$ and generator $\psi$. Let \[s_{\lambda,\lambda'}\in \Gamma(S,\Li_\lambda^{-1}\otimes\Li_{\lambda'}^{-1}\otimes\Li_{\lambda+\lambda'})\] be the global section corresponding to the multiplication $\Li_\lambda\otimes\Li_{\lambda'}\to \Li_{\lambda+\lambda'}$. Then \[\ord_C(s_{\lambda,\lambda'})=\varepsilon_{\lambda,\lambda'}^{N,\psi}\,.\]
\end{thm}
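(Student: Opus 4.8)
The plan is to pass to the strict henselization along $C$ and compute the order in an explicit model of the cover coming from its inertia. By Remark~\ref{rk:hens} we may replace $\oj_{S,C}$ by its strict henselization $R:=\oj_{S,C}^{\mathrm{sh}}$, a discrete valuation ring with uniformizer $s$ and algebraically closed residue field; write $B:=\oj_{X,C}^{\mathrm{sh}}=\bigoplus_{\lambda\in A}R\,v_\lambda$ with multiplication $v_\lambda v_{\lambda'}=s^{\ord_C(s_{\lambda,\lambda'})}v_{\lambda+\lambda'}$, as in the proof of Proposition~\ref{prop:structure}. Fix a component $T$ of $p^{-1}(C)$. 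Since $X$ is normal and $C$ has codimension~$1$, the local ring $\oj_T$ is a discrete valuation ring, and because $\mathrm{char}\,k\nmid |A|$ the extension $R\hookrightarrow\oj_T$ is tamely ramified. Its inertia group is the stabilizer $D(N)$, which by hypothesis acts on the cotangent space through the generator $\psi\in N$; hence the extension is totally ramified of degree $e:=|N|$ and we may write $\oj_T\cong R[t]/(t^e-us)$ for a unit $u$, with $D(N)$ acting by $g\cdot t=\psi(g)\,t$. In particular the normalized valuation $\nu$ of $\oj_T$ satisfies $\nu(t)=1$ and $\nu(s)=e$ (as $t^e=us$).

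First I would read off the $N$-grading on $\oj_T$ induced by the $D(N)$-action. Since $t$ has weight $\psi$, the component of weight $x\in N$ is spanned over $R$ by the powers $t^a$ with $\psi^a=x$, and the relation $t^e=us$ folds all of these onto the lowest, so $(\oj_T)_x=R\,t^{\,m(x)}$ with $m(x)=\min\{a\ge 0:\psi^a=x\}$. Composing with the surjection $A\twoheadrightarrow N$ Cartier-dual to $D(N)\hookrightarrow D(A)$, this says precisely $(\oj_T)_{\bar\lambda}=R\,t^{\,i(\lambda)}$, where $i$ is the function of the statement. The key step is then to identify $B$ with the algebra induced from the inertia, $B\cong\mathrm{Ind}_{D(N)}^{D(A)}\oj_T$; equivalently, that the graded projection $B\to\oj_T$ sends the generator $v_\lambda$ of the invertible module $\Li_\lambda=B_\lambda$ to a unit multiple of $t^{\,i(\lambda)}$ for every $\lambda$. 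That $D(A)$ acts simply transitively on the components of $p^{-1}(C)$ through $D(A)/D(N)$, together with the fact that each $v_\lambda$ is a non-zero-divisor in $B$ (its norm $v_\lambda v_{-\lambda}=s^{\ord_C(s_{\lambda,-\lambda})}$ is a non-zero-divisor in $R$, and $B$ is $R$-free), forces the $T$-component of $v_\lambda$ to generate $(\oj_T)_{\bar\lambda}$. Taking $v_\lambda=t^{\,i(\lambda)}$ reduces everything to a computation in $\oj_T$.

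Finally I would compute directly. In $\oj_T$ we have $v_\lambda v_{\lambda'}=t^{\,i(\lambda)+i(\lambda')}$ while $v_{\lambda+\lambda'}=t^{\,i(\lambda+\lambda')}$. Since $\bar\lambda+\bar{\lambda'}=\psi^{\,i(\lambda)+i(\lambda')}$ in $N$ and $0\le i(\lambda),i(\lambda')\le e-1$, we get $i(\lambda+\lambda')=i(\lambda)+i(\lambda')-e\cdot\varepsilon^{N,\psi}_{\lambda,\lambda'}$, the correction being $0$ when $i(\lambda)+i(\lambda')<e=|N|$ and $1$ otherwise. Using $t^e=us$ this yields $v_\lambda v_{\lambda'}=(us)^{\varepsilon^{N,\psi}_{\lambda,\lambda'}}v_{\lambda+\lambda'}$, and comparing with $v_\lambda v_{\lambda'}=s^{\ord_C(s_{\lambda,\lambda'})}v_{\lambda+\lambda'}$ (cancelling the non-zero-divisor $v_{\lambda+\lambda'}$) gives $\ord_C(s_{\lambda,\lambda'})=\varepsilon^{N,\psi}_{\lambda,\lambda'}$, as claimed. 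I expect the main obstacle to be the middle step: establishing the clean local model $B\cong\mathrm{Ind}_{D(N)}^{D(A)}\oj_T$ with $v_\lambda=t^{\,i(\lambda)}$. This is exactly where tameness is indispensable, since it guarantees both that $\oj_T$ is the standard tamely ramified extension $R[t]/(t^e-us)$ and that the sections $v_\lambda$ meet each component in a generator rather than acquiring spurious vanishing.
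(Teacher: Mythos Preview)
Your proposal is correct and follows essentially the same strategy as the paper: localize and strictly henselize along $C$, reduce to a single component $T$ where the cover is the totally ramified $D(N)$-extension $R[t]/(t^e-us)$ with $t$ of weight $\psi$, and then read off $\ord_C(s_{\lambda,\lambda'})$ from $t^{i(\lambda)}t^{i(\lambda')}$. The only cosmetic difference is that the paper phrases the reduction via the factorization $X_C^{sh}\to X'_C\to S_C^{sh}$ through the trivial $D(K)$-torsor $X'_C$ (with $K=\ker(A\twoheadrightarrow N)$) and then replaces the base by a component of $X'_C$, whereas you project $B$ onto $\oj_T$ and invoke the induced-cover description; both amount to the same identification $v_\lambda\leftrightarrow t^{i(\lambda)}$, and you have correctly flagged this as the one step needing care.
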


For completeness, we give a proof.

\begin{proof}
Let $\phi\colon A\to N$ be the dual of the inclusion $D(N)\to D(A)$. The cover \[X_C^{sh}=X\times_S\spec\oj_{S,C}^{sh}\to S_C^{sh}=\spec \oj_{S,C}^{sh}\] factors as \[X_C^{sh}\to X'_C\to S_C^{sh}\] where the first arrow is a totally ramified $D(N)$-cover and the second is a trivial $D(K)$-torsor for $K=\ker \phi$ (since char$(k)\nmid |A|$).
Hence $\Gamma(X'_C,\oj_{X'_C})$ is just a product of copies of $\oj_{S,C}^{sh}$ and we may replace $S_C^{sh}$ by one of the connected components of $X'_C$. Hence we may assume that $X_C^{sh}\to S_C^{sh}$ is connected which implies that $R=\Gamma(X_C^{sh},\oj_{X_C}^{sh})$ is a local ring.
By \cite[Lemma 1.2]{Pardini}, we may choose a generator $x_\psi$ for the line bundle of $X_C^{sh}$ of weight $\psi$ such that $x_\psi$ is a generator of the maximal ideal of $R$. Let $s=x_{\psi}^{|N|}$. Then $s$ is a generator for the maximal ideal in $\oj_{S,C}^{sh}$ since $R$ is normal by assumption.
It follows that the line bundle $\Li_{\lambda,C}$ is generated by $x_\lambda=x_{\psi}^{i(\lambda)}$ and since  \[x_{\lambda}x_{\lambda'}=s_{\lambda,\lambda'}x_{\lambda+\lambda'}\]
we get that
\[s_{\lambda,\lambda'}=\begin{cases}1\,, & \mbox{if }i(\lambda)+i(\lambda')<|N|\,, \\ s\,, & \mbox{otherwise}\,.\end{cases}\qedhere\]
\end{proof}

\subsection*{Branch locus}
In this subsection we consider only $D(A)$-covers $X\to S$ such that there is an open dense subscheme $U\subseteq S$ such that $U\times_SX\to U$ is a $D(A)$-torsor.

We define the ramification locus $R\subset X$ of a $G$-cover $\pi\colon X\to S$ as the set of points where $X$ is ramified, i.e., the set of points in $X$ where $\Omega_{X/S}$ do not vanish. Hence there is a canonical scheme structure on the ramification locus, namely \[\spec \oj_X/\Ann(\Omega_{X/S})\,.\] However, there are several ways to put a scheme structure on the branch locus (the set-theoretic image of the ramification locus) $B\subset S$ of a $G$-cover. We will compare two possible choices of ideals defining the branch locus:

\begin{enumerate}
\item the discriminant ideal $d(\pi)\subseteq \oj_S$, and
\item the ideal $\oj_S\cap\Ann(\Omega_{X/S})=\Ann(\pi_*\Omega_{X/S})$.
\end{enumerate}

\begin{rk}
One could also consider the zeroth Fitting ideal $\Fit_0(\Omega_{X/S})\subseteq \Ann(\Omega_{X/S})$ which has the same radical as $\Ann(\Omega_{X/S})$. But we will not do this here.
\end{rk}

\begin{lem}\label{lem:disc}
If $X\to S$ is a $D(A)$-cover where $S=\spec R$ and all line bundles defining $X$ are trivial, then $X$ is the spectrum of the ring \[R_X=R[\{v_\lambda\}_{\lambda\in A}]/(\{v_\lambda v_{\lambda'}-s_{\lambda,\lambda'}v_{\lambda+\lambda'}\}_{\lambda,\lambda'\in A})\] and the discriminant $d(\pi)$ of the cover is given by the formula
\[d(\pi)=\left(|A|^{|A|}\prod_{\lambda\in A}s_{\lambda,-\lambda}\right)\,.\]
\end{lem}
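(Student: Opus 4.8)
The plan is to compute $d(\pi)$ directly as the determinant of the trace form in the global basis $\{v_\lambda\}_{\lambda\in A}$ of $\pi_*\oj_X$. Since all the line bundles are trivial, $B:=\Gamma(X,\oj_X)$ is free over $R$ with this basis, so the discriminant ideal is generated by $\det\big(\Tr(v_\lambda v_\mu)\big)_{\lambda,\mu\in A}$, where $\Tr$ denotes the trace of the $R$-linear multiplication endomorphism on $B$. The entire argument reduces to understanding this matrix.

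First I would compute the trace of multiplication by a single homogeneous generator $v_\gamma$. In the basis $\{v_\lambda\}$ this map sends $v_\lambda\mapsto v_\gamma v_\lambda=s_{\gamma,\lambda}v_{\gamma+\lambda}$, so its matrix has the entry $s_{\gamma,\lambda}$ in position $(\gamma+\lambda,\lambda)$ and zeros elsewhere in that column. A diagonal contribution can occur only when $\gamma+\lambda=\lambda$, i.e. $\gamma=0$. Using $s_{0,\lambda}=1$ (property (1) of the sections in Remark \ref{rk:lin-bun}) one sees that $v_0$ acts as the identity, hence $v_0=1$ and $\Tr(v_0)=|A|$, while $\Tr(v_\gamma)=0$ for every $\gamma\neq 0$. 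In other words, homogeneous elements of nonzero degree are traceless.

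Next I would feed this into the trace form. From $v_\lambda v_\mu=s_{\lambda,\mu}v_{\lambda+\mu}$ we get $\Tr(v_\lambda v_\mu)=s_{\lambda,\mu}\Tr(v_{\lambda+\mu})$, which vanishes unless $\mu=-\lambda$ and equals $|A|\,s_{\lambda,-\lambda}$ when $\mu=-\lambda$. Thus the trace-form matrix $M=\big(\Tr(v_\lambda v_\mu)\big)$ is a monomial (generalized permutation) matrix whose unique nonzero entry in row $\lambda$ lies in column $-\lambda$, realizing the involution $\sigma\colon\lambda\mapsto-\lambda$ of $A$. Expanding the determinant, the only permutation producing a nonzero term is $\sigma$ itself, so $\det M=\operatorname{sgn}(\sigma)\prod_{\lambda\in A}\big(|A|\,s_{\lambda,-\lambda}\big)=\operatorname{sgn}(\sigma)\,|A|^{|A|}\prod_{\lambda\in A}s_{\lambda,-\lambda}$.

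Finally, since $d(\pi)$ is the \emph{ideal} generated by $\det M$, the sign $\operatorname{sgn}(\sigma)=\pm 1$ is a unit and may be discarded, yielding $d(\pi)=\big(|A|^{|A|}\prod_{\lambda\in A}s_{\lambda,-\lambda}\big)$ as claimed. I do not expect a serious obstacle: the computation is elementary once the two structural observations are in place, namely the identification $v_0=1$ (so that $\Tr(v_0)=|A|$) and the traceless\-ness of positive-degree generators, which is precisely what forces $M$ to be supported on the antidiagonal indexed by $\lambda\mapsto-\lambda$. The only mild subtlety worth flagging is that the sign of the negation permutation is genuinely nontrivial in general, but it is harmless here because we are computing an ideal rather than a single element.
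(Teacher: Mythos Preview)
Your proof is correct and follows essentially the same approach as the paper: both compute the trace of multiplication by each $v_\gamma$ (finding it to be $|A|$ or $0$ according as $\gamma=0$ or not), deduce that the trace-form matrix has its only nonzero entries $|A|\,s_{\lambda,-\lambda}$ in position $(\lambda,-\lambda)$, and conclude that its determinant is $\pm|A|^{|A|}\prod_{\lambda}s_{\lambda,-\lambda}$. Your write-up is slightly more explicit about the permutation structure and why the sign is irrelevant for the ideal, but there is no substantive difference.
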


\begin{proof}
The first assertion is trivial and we prove the second. The discriminant $d(\pi)$ is the determinant of the map $R_X\to (R_X)^\vee$ defined on the generators by \[v_{\lambda}\mapsto \Tr(m_{-\cdot v_{\lambda}})\] where $\Tr(m_{-\cdot v_{\lambda}})$ is the map $R_X\to R$ sending a generator $v_{\lambda'}$ to the trace of the map $m_{v_{\lambda'}v_{\lambda}}$ given by multiplication by $v_{\lambda'}v_{\lambda}=s_{\lambda,\lambda'}v_{\lambda+\lambda'}$. The matrix of $m_{v_{\lambda}v_{\lambda'}}$ in the basis $\{v_{\lambda''}\}_{\lambda''\in A}$ will have no element on the diagonal if $\lambda+\lambda'\neq 0$, and $s_{\lambda, -\lambda}$ at every entry of the diagonal otherwise. Hence the trace of the matrix will be either 0 or $|A|s_{\lambda, -\lambda}$. This means that 
  \[
    \Tr(m_{-\cdot v_{\lambda}})=|A|s_{\lambda,-\lambda}v_{-\lambda}^*\,,
  \]
where $v_{-\lambda}^*$ is the dual of $v_{-\lambda}$. Hence $d(\pi)$ is the determinant of the map $v_{\lambda}\mapsto |A|s_{\lambda,-\lambda}v_{-\lambda}^*$ and we conclude that 
  \[
    d(\pi)=\pm|A|^{|A|}\prod_{\lambda\in A}s_{\lambda,-\lambda}\,.\qedhere
  \]
\end{proof}

\begin{ex}[Non-example]\label{ex:non-example}
  Let $p\geq 3$ be an odd prime number and let $K$ be the cyclotomic field obtained by adding a primitive $p$th root of unity to $\mathbb{Q}$. Let $\oj_K\subset K$ be the ring of integers. Then $\spec\oj_K\to \spec \ZZ$ is not a ramified $D(\ZZ/(p-1))$-cover since the discriminant of $K$ is a power of $p$ whereas $p-1$ divides the discriminant of any $D(\ZZ/(p-1))$-cover by Lemma \ref{lem:disc}.
\end{ex}
  
Despite the fact that $\spec\oj_K\to \spec \ZZ$ of the previous example is not a ramified $D(\ZZ/(p-1))$-cover, we have that the stack quotient $[\spec\oj_K/\ZZ/(p-1)]$ is a stacky cover (Definition \ref{df:st-cov}). Also see Example \ref{ex:cy}.

\begin{lem}\label{lem:branch}
If $\pi\colon X\to S$ is a $D(A)$-cover, then \[d(\pi)\subseteq\Ann(\pi_*\Omega_{X/S})\,.\]
\end{lem}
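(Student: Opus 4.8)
The plan is to reduce to a completely explicit affine situation and then exhibit a ``Casimir'' element which witnesses that the discriminant annihilates the differentials. Since the inclusion of quasi-coherent ideals $d(\pi)\subseteq\Ann(\pi_*\Omega_{X/S})$ may be checked locally on $S$, and since a $D(A)$-cover trivializes Zariski-locally (Remark~\ref{rk:lin-bun}), I may assume $S=\spec R$ with all the $\Li_\lambda$ trivial, so that $B:=\Gamma(X,\oj_X)$ is the free $R$-algebra of Lemma~\ref{lem:disc}, with $R$-basis $\{v_\lambda\}_{\lambda\in A}$, relations $v_\lambda v_{\lambda'}=s_{\lambda,\lambda'}v_{\lambda+\lambda'}$, and unit $v_0=1$; here $d(\pi)=(\delta)$ with $\delta=|A|^{|A|}\prod_{\lambda}s_{\lambda,-\lambda}$. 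As $\pi$ is affine, $\pi_*\Omega_{X/S}$ is $\Omega_{B/R}$ viewed as an $R$-module, so $\Ann(\pi_*\Omega_{X/S})=\Ann_R(\Omega_{B/R})$ and it suffices to prove $\delta\cdot\Omega_{B/R}=0$.

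Next I would set up the Noether different. Write $m\colon B\otimes_R B\to B$ for the multiplication and $I=\ker m$, so that $\Omega_{B/R}=I/I^2$ with $B$ acting through $m$. The key elementary observation is that any $\tau\in\Ann_{B\otimes_R B}(I)$ forces $m(\tau)\in\Ann_B(\Omega_{B/R})$: for $x\in I$ one has $m(\tau)\cdot(x\bmod I^2)=\tau x\bmod I^2=0$ since $\tau x\in\tau I=0$. Thus it is enough to produce an element $\tau\in\Ann_{B\otimes_R B}(I)$ with $m(\tau)=\delta\cdot 1_B$.

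The candidate is the rescaled Casimir element of the trace form. By the computation in Lemma~\ref{lem:disc} the trace form is anti-diagonal, namely $\Tr(v_\lambda v_{\lambda'})=|A|\,s_{\lambda,-\lambda}$ if $\lambda'=-\lambda$ and $0$ otherwise, so the trace-dual of $v_\lambda$ is $\tfrac{1}{|A|s_{\lambda,-\lambda}}v_{-\lambda}$; multiplying by $\delta$ clears denominators and produces the honest element $\tilde v_\lambda=|A|^{|A|-1}\big(\prod_{\mu\neq\lambda}s_{\mu,-\mu}\big)v_{-\lambda}\in B$. I then set
\[
\theta=\sum_{\lambda\in A}v_\lambda\otimes\tilde v_\lambda\in B\otimes_R B.
\]
A direct computation gives $m(\theta)=\sum_\lambda v_\lambda\tilde v_\lambda=\sum_\lambda|A|^{|A|-1}\big(\prod_{\mu}s_{\mu,-\mu}\big)1_B=\delta\cdot 1_B$, using $v_\lambda v_{-\lambda}=s_{\lambda,-\lambda}\cdot 1$. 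The crux is to check $\theta\in\Ann_{B\otimes_R B}(I)$, i.e. $(v_\gamma\otimes 1)\theta=(1\otimes v_\gamma)\theta$ for every $\gamma$ (these elements generate $I$). Expanding both sides and comparing the coefficient of $v_\nu\otimes v_{\gamma-\nu}$, the two coefficients share a common factor and differ only by the bracket
\[
s_{\nu,-\nu}\,s_{\gamma,\nu-\gamma}-s_{\nu-\gamma,\gamma-\nu}\,s_{\gamma,-\nu},
\]
which vanishes: it is precisely property~(3) of Remark~\ref{rk:lin-bun} applied to the triple $(\nu-\gamma,\gamma,-\nu)$, combined with the symmetry~(2). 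Granting this, $\theta$ kills all generators of $I$, hence $\delta\cdot 1_B=m(\theta)$ annihilates $\Omega_{B/R}$, giving $\delta\in\Ann_R(\Omega_{B/R})$ and therefore $d(\pi)\subseteq\Ann(\pi_*\Omega_{X/S})$.

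The main obstacle is exactly this verification that $\theta\in\Ann(I)$. One cannot simply use an honest trace-dual basis, because $\delta$ is in general a zerodivisor (it vanishes along the branch locus, which is the whole point of the comparison), so one is forced to work with the adjugate-rescaled duals $\tilde v_\lambda$ and establish the symmetry of $\theta$ on the nose rather than after inverting $\delta$. This is the only nonformal input, and it is where the associativity axiom~(3) of the system $\{s_{\lambda,\lambda'}\}$ is used in an essential way. I would close by remarking that the argument is a concrete incarnation of the classical fact that the discriminant ideal of a finite locally free algebra annihilates its module of Kähler differentials.
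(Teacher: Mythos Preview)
Your argument is correct and takes a genuinely different route from the paper. The paper proceeds by a direct combinatorial manipulation: for each fixed $\lambda$ of order $n$, it rewrites the product $\prod_{\mu}s_{\mu,-\mu}$ using the cocycle identities so as to extract the factor $s_{\lambda,\lambda}s_{\lambda^2,\lambda}\cdots s_{\lambda^{n-1},\lambda}=v_\lambda^{n}\in R$, and then uses $n\mid|A|$ together with $nv_\lambda^{n-1}\,dv_\lambda=0$ to kill $dv_\lambda$. This is elementary but somewhat ad hoc, requiring a separate bookkeeping for $n$ even and $n$ odd.

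Your approach via the Noether different is more structural: you exhibit a single element $\theta\in B\otimes_R B$ with $m(\theta)=\delta$ and $\theta\cdot I=0$, which immediately yields $\delta\cdot\Omega_{B/R}=0$ for all generators at once. The only real computation is the verification that $(v_\gamma\otimes 1-1\otimes v_\gamma)\theta=0$, and as you observe this reduces after factoring out the common product $\prod_{\mu\neq\nu,\nu-\gamma}s_{\mu,-\mu}$ to the single cocycle relation for the triple $(\nu-\gamma,\gamma,-\nu)$. This is cleaner, places the result in the classical framework comparing the discriminant with the Noether/K\"ahler different, and makes transparent why associativity~(3) is exactly what is needed. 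The paper's argument has the minor advantage of being entirely self-contained (no need to introduce $I/I^2$ or $B\otimes_R B$), but yours is the more conceptual of the two.
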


\begin{proof}
Throughout this proof we write $A$ multiplicatively. We may reduce to the case where $X$ and $S$ are affine and all line bundles of the cover are trivial. Let $v_\lambda$ be a generator for the graded piece of $\oj_X$ corresponding to $\lambda\in A$. The module of Kähler differentials is generated by the elements $dv_\lambda$. Let $n=n_\lambda$ be the order of $\lambda\in A$. We know that $v_\lambda^{n}$ lies in the zeroth piece so $nv_\lambda^{n-1}dv_\lambda=0$ and hence $nv_\lambda^{n-1}$ annihilates $dv_\lambda$. Hence it is enough to show that the generator of $d(\pi)$ contains a factor $nv_\lambda^{n-1}$ for each $\lambda\in A$. We expand the part of the product (in the formula for $d(\pi)$ given in Lemma \ref{lem:disc}) which is indexed by elements in the subgroup generated by $\lambda$. We will also group the elements two-by-two: $(s_{\lambda,\lambda^{-1}}s_{\lambda^{-1},\lambda})$, $(s_{\lambda^2,\lambda^{-2}}s_{\lambda^{-2},\lambda^2}), \dots$ except when $n$ is even, where one element will be grouped alone. Most importantly, we use the \emph{associativity} (and commutativity) of $\oj_X$: $s_{\lambda,\lambda^{n-1}}=s_{\lambda^{n-1},\lambda}=s_{\lambda,\lambda^{n-1}}s_{1, \lambda^2}=s_{\lambda^{n-1}, \lambda^2}s_{\lambda, \lambda}$. For $n$ even, we get
  \[
    \begin{split}
      d(\pi) & =  |A|^{|A|}\prod_{\lambda'\in A}s_{\lambda',(\lambda')^{-1}} \\ & =
                  |A|^{|A|}\tilde{s}(s_{\lambda,\lambda^{n-1}}s_{\lambda^{n-1},\lambda})(s_{\lambda^2,\lambda^{n-2}}s_{\lambda^{n-2},\lambda^2})\dots \\ & =
                  |A|^{|A|}\tilde{s}(s_{\lambda,\lambda}s_{\lambda^2,\lambda^{n-1}})^2(s_{\lambda^2,\lambda}s_{\lambda^3,\lambda^{n-2}})^2\dots(s_{\lambda^{n/2-1},\lambda}s_{\lambda^{n/2},\lambda^{n/2}})^2s_{\lambda^{n/2},\lambda^{n/2}} \\ & =
                  |A|^{|A|}\tilde{s}'s_{\lambda,\lambda}s_{\lambda^2,\lambda}\dots s_{\lambda^{n/2-1},\lambda}s_{\lambda,\lambda^{n/2-1}}\dots s_{\lambda^2,\lambda}s_{\lambda,\lambda} \\ & =
                  |A|^{|A|}\tilde{s}'((\dots(v_\lambda v_\lambda)v_\lambda)\dots(v_\lambda(v_\lambda v_\lambda))\dots) \\ & =
                  |A|^{|A|}\tilde{s}'v_\lambda^{n}\,,\end{split}
  \] 
for some elements $\tilde{s}, \tilde{s}'$, and hence $d(\pi)$ annihilates $dv_\lambda$ since $n$ divides $|A|$. Note here that, in the forth equality, we simply pick the first element from each parenthesis and write these as a product, followed by that same product written out backwards (each parenthesis was raised to a power of two). All other elements in the parentheses becomes a factor of $\tilde{s}'$. 

Similarly, when $n$ is odd we get
\[\begin{split}d(\pi)& =
|A|^{|A|}\prod_{\lambda\in A}s_{\lambda,\lambda^{-1}} \\ & =
|A|^{|A|}\tilde{s}(s_{\lambda,\lambda^{n-1}}s_{\lambda^{n-1},\lambda})(s_{\lambda^2,\lambda^{n-2}}s_{\lambda^{n-2},\lambda^2})\dots \\ & =
|A|^{|A|}\tilde{s}(s_{\lambda,\lambda}s_{\lambda^2,\lambda^{n-1}})^2\dots(s_{\lambda^{\frac{n-1}{2}-1},\lambda}s_{\lambda^{\frac{n-1}{2}},\lambda^{\frac{n+3}{2}}})^2(s_{\lambda^{\frac{n-1}{2}},\lambda}s_{\lambda^{\frac{n+1}{2}},\lambda^{\frac{n+1}{2}}})^2 \\ & =
|A|^{|A|}\tilde{s}''s_{\lambda,\lambda}s_{\lambda^2,\lambda}\dots s_{\lambda^{\frac{n-1}{2}},\lambda}s_{\lambda^{\frac{n-1}{2}},\lambda}\dots s_{\lambda,\lambda} \\ & =
|A|^{|A|}\tilde{s}''v_\lambda^{n}\,,
\end{split}\] which again annihilates $dv_\lambda$ since $nv_\lambda^{n-1}$ annihilates $dv_\lambda$. Since $\lambda$ was arbitrary we conclude that $d(\pi)\subseteq \Ann(\pi_*\Omega_{X/S})$.
\end{proof}

The inclusion in Lemma \ref{lem:branch} is most often strict.

\begin{ex}
Consider the $\mmu_{3,S}$-cover $\pi\colon X:=\spec \ZZ[s,x,y]/(x^2-sy,y^2-sx,xy-s^2)\to S:=\spec \ZZ[s]$ where $x$ has weight 1 and $y$ weight 2. We have \[\begin{split}\Ann(\pi_*\Omega_{X/S}) & = (3s^2) \\ d(\pi) & = (27s^4)\,.\end{split}\]
\end{ex}

\section{Deligne--Faltings structures and root stacks}\label{sec:DF}

In this section we discuss the notions of Deligne--Faltings structures and root stacks associated to a Deligne--Faltings structure together with a homomorphism of monoids. The main reference for this section is \cite{Borne-Vistoli}. All monoids are assumed to be \emph{commutative}.

We first recall some basic definitions about monoids:

\begin{df}\label{mon-defs}
A monoid $P$ is called
\begin{enumerate}
\item \emph{finitely generated} if there is a number $n\in \N$ and a surjection $\N^n\to P$;
\item \emph{sharp} if $P^\times=\{0\}$;
\item \emph{integral} if $p,q,q'\in P$ and $p+q=p+q'$ implies that $q=q'$;
\item \emph{u-integral} if $P^\times$ acts freely on $P$;
\item \emph{quasi-integral} if $p,q\in P$ and $p+q=p$ implies that $q=0$;
\item \emph{fine} if it is integral and finitely generated;
\item \emph{quasi-fine} if it is quasi-integral and finitely generated.
\end{enumerate}
\end{df}

\begin{rk}
A monoid $M$ is integral if and only if the canonical map $M\to M^{gp}$ is injective.
\end{rk}

\subsection*{Deligne--Faltings structures}

\begin{df}\label{df:div}
We denote by $\DIV_{S_{\et}}$ the restriction of $\DIV_S=[\Af^1_S/\Gm_{m,S}]$ to the small \'{e}tale site of $S$.
\end{df}

The following definition is very closely related to the notion of a log structure (see Remark \ref{rk:log}):

\begin{df}
Let $S$ be a scheme. A \emph{pre-Deligne--Faltings structure} (\emph{pre-DF-structure} short) \[\Li\colon \PP\to \DIV_{\SEt}\] consists of
\begin{enumerate}
\item a presheaf $\PP$ of monoids on $S_\et$, and
\item a symmetric monoidal functor $\Li\colon \PP\to \DIV_{\SEt}$.
\end{enumerate}
A pre-DF-structure is called a \emph{Deligne--Faltings structure} (\emph{DF-structure} short) if $\PP$ is a sheaf and $\Li$ has trivial kernel (Recall that the ``zero'' in $\DIV_{\SEt}$ is $(\oj_S, 1)$).
\end{df}

\begin{rk}
Given a pre-Deligne--Faltings structure there is a notion of the \emph{associated Deligne--Faltings structure} \cite[Proposition 3.3]{Borne-Vistoli}.
\end{rk}

\begin{rk}
Here we view $\PP$ as a symmetric monoidal category where all arrows are identities and the tensor product is given by the binary operation in $\PP$. A symmetric monoidal category is a braided monoidal category such that for each pair of objects $a$ and $b$, the diagram \[\xymatrix{a\otimes b\ar[rd]^{\id}\ar[r]^{\gamma_{a,b}} & b\otimes a\ar[d]^{\gamma_{b,a}} \\ & a\otimes b}\] commutes, where $\gamma_{a,b}\colon a\otimes b\to b\otimes a$ is the braiding isomorphism.
By a symmetric monoidal functor we mean a braided monoidal functor as in \cite[IV, \S 2, p. 257]{Maclane}, i.e., a monoidal functor which commutes with the braiding.
\end{rk}

\begin{rk}\label{rk:log}
The notion of a Deligne--Faltings structure is equivalent to the notion of a \emph{u-integral} log structure \cite[Theorem 3.6]{Borne-Vistoli}, that is, a log structure $\rho\colon M\to \oj_S$ such that the action of $\rho^{-1}\oj_S^\times\simeq \oj_S^\times$ on $M$ is free. If $\PP\to \DIV_{S_\et}$ is a Deligne--Faltings structure, then the corresponding log structure is given by the projection \[\PP\times_{\DIV_{S_\et}}\oj_S\to \oj_S\,,\]
where the map $\oj_S\to \DIV_{S_\et}$ sends $f\in \Gamma(U,\oj_U)$ to $(\oj_U,f)$.
\end{rk}

\begin{rk}
Note that $\Li$ may have trivial kernel but still map different elements to isomorphic objects. For example, let $P$ be the constant monoid $\N^2$ and $\Li$ a line bundle on $S$ with a global section $s$, and assume that $\Li$ is non-trivial or that $s$ vanishes at some point of $S$. Then the symmetric monoidal functor which sends both $(0,1)$ and $(1,0)$ to $(\Li,s)$ has trivial kernel.
\end{rk}

\begin{df}[{\cite[Definition 4.1]{Borne-Vistoli}}]\label{df:Kummer}
A morphism of monoids $\varphi\colon P\to Q$ is called \emph{Kummer} if
\begin{enumerate}
\item it is injective and
\item for every $q\in Q$ there is an $n\in \N$ and $p\in P$ such that $\varphi(p)=nq$.
\end{enumerate}
A morphism of \'etale sheaves of monoids $\PP\to \Q$ is called \emph{Kummer} if for every geometric point $\overline{x}\in S$, $\PP_{\overline{x}}\to \Q_{\overline{x}}$ is Kummer.
\end{df}

\begin{df}
A \emph{chart} for a sheaf of monoids $\PP$ is a finitely generated monoid $P$ together with a homomorphism of monoids $P\to \PP(S)$ such that the induced morphism $P_S\to \PP$ is a cokernel in the category of sheaves of monoids.
An \emph{atlas} for $\PP$ consists of an \'{e}tale covering ${U_i\to S}$ together with charts $P_i\to \PP(U_i)$ for each $i$.
If $\varphi\colon \PP\to \Q$ is Kummer then a \emph{chart} for $\varphi$ consists of charts $P\to \PP(S)$, $Q\to \Q(S)$, and a Kummer homomorphism $P\to Q$ such that the induced diagram \[\begin{tikzcd}P\ar{r}\ar{d} & Q\ar{d} \\ \PP(S)\ar{r} & \Q(S)\end{tikzcd}\] commutes.
\end{df}

\begin{df}
A sheaf of monoids $\PP$ is
\begin{enumerate}
\item \emph{sharp} if $\PP(U)$ is sharp for every object $U$ in the site, i.e., $\PP(U)$ has a unique invertible element, namely 0, and
\item \emph{coherent} if it is sharp and has an atlas.
\end{enumerate}
\end{df}

\begin{lem}[{\cite[Lemma 4.7]{Borne-Vistoli}}]
Let $\PP$ be a coherent sheaf of monoids. Let \[\begin{tikzcd}P\ar{r}{\varphi}\ar{d} & Q\ar{d} \\ \PP(S)\ar{r} & \Q(S)\end{tikzcd}\] be a chart for a Kummer homomorphism $\PP\to \Q$ and denote by $K_P$ and $K_Q$ the kernels of $P_S\to \PP$ and $Q_S\to \Q$ respectively. If $U\to S$ is \'{e}tale and $q\in K_Q(U)$, then there is an \'{e}tale cover $\{U_i\to U\}$, integers $n_i\in \N$, and elements $p_i\in K_P(U_i)$ for all $i$ such that $\varphi(p_i)=n_iq|_{U_i}$.
\end{lem}

\begin{proof}
Since $P\to Q$ is Kummer, there is an \'{e}tale cover $\{U_i\to U\}$, integers $n_i\in \N$, and elements $p_i\in P_S(U_i)$ for all $i$ such that $\varphi(p_i)=n_iq|_{U_i}$. But the image of $p_i$ in $\Q(U_i)$ is zero and since $\PP\to \Q$ has trivial kernel we see that $p_i$ must map to zero in $\PP(U_i)$, i.e., $p_i\in K_P(U_i)$.
\end{proof}

A morphism of sharp monoids $\phi\colon P\to Q$ induces a map of schemes $\spec \ZZ[Q]\to \spec \ZZ[P]$ and we may ask what property A the map $\phi$ need to have for the map on spectra to have a property B.

\begin{df}\label{df:int-mor}
A morphism $\phi\colon P\to Q$ of monoids is called
\begin{enumerate}
  \item \emph{integral} if it satisfies the following condition:
  Whenever $q_1,q_2\in Q$ and $p_1,p_2\in P$ satisfy $\phi(p_1)+q_1=\phi(p_2)+q_2$ there exist $q'\in Q$ and $p_1', p_2'\in P$ such that \[\begin{split}q_1 & =\phi(p_1')+q'\,, \\ q_2 & =\phi(p_2')+q'\,, \\ p_1 & +p_1' = p_2+p_2'\,;\end{split}\]
  \item \emph{flat} if it is integral and satisfy the following supplementary condition:
  Whenever $q\in Q$ and $p_1,p_2\in P$ satisfy $\phi(p_1)+q=\phi(p_2)+q$ there exist $q'\in Q$ and $p'\in P$ such that \[\begin{split}q & =\phi(p')+q'\,, \\ p_1 & +p' = p_2+p'\,.\end{split}\]
\end{enumerate}

\end{df}


\begin{rk}
One may think of the property of being integral as allowing us to complete every pair of solid arrows to a commutative square: \[\begin{tikzcd}q'\ar[dashed]{r}{p_1'}\ar[dashed]{d}{p_2'} & q_1\ar{d}{p_1} \\ q_2\ar{r}{p_2} & q_0\,.\end{tikzcd}\]
Flatness, in addition, allows us to complete every pair of solid arrows to: \[\begin{tikzcd}q'\ar[dashed]{r}{p'} & q\arrow[r, shift left=0.5ex]{}{p_1}\arrow[r, shift left=-0.5ex,swap]{}{p_2} & q_0\,,\end{tikzcd}\]
such that the two compositions agree.
\end{rk}

\begin{rk}\label{rk:flatGroup}
Let $P\hookrightarrow Q$ be an injective morphism of integral monoids. Note that we can complete every pair of arrows to a commutative square: \[\begin{tikzcd}q''\ar[dashed]{r}{p_2}\ar[dashed]{d}{p_1} & q_1\ar{d}{p_1} \\ q_2\ar{r}{p_2} & q\end{tikzcd}\] in the \emph{associated group} $Q^{gp}$, if we put $q''=q_1-p_2=q_2-p_1$. Hence if there is a $p\in P$ such that
$q''+p\in Q$, $p_1-p\in P$, and $p_2-p\in P$, then $P\hookrightarrow Q$ is integral.
\end{rk}

\begin{lem}\label{lem:flat}
Let $f\colon \spec \ZZ[Q]\to \spec \ZZ[P]$ be the morphism induced by a morphism $\phi\colon P\to Q$ of integral monoids. Then $f$ is flat if and only if $\phi$ is flat if and only if $\phi$ is integral and injective.
\end{lem}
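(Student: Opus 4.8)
The plan is to establish the two biconditionals separately: first the purely combinatorial equivalence ``$\phi$ flat $\iff$ $\phi$ integral and injective'', and then the ring-theoretic equivalence ``$f$ flat $\iff$ $\phi$ integral and injective''. The combinatorial one is immediate from the fact that $P$ and $Q$ are integral, hence cancellative. If $\phi$ is flat it is integral by Definition~\ref{df:int-mor}; for injectivity I feed a pair $p_1,p_2$ with $\phi(p_1)=\phi(p_2)$ into the supplementary flatness condition with $q=0$, obtaining $q',p'$ with $p_1+p'=p_2+p'$, and then cancel $p'$. Conversely, if $\phi$ is integral and injective, then given $\phi(p_1)+q=\phi(p_2)+q$ I cancel $q$ in $Q$ to get $\phi(p_1)=\phi(p_2)$, hence $p_1=p_2$ by injectivity, so that $q'=q$ and $p'=0$ witness the supplementary condition. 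From now on I may use ``$\phi$ flat'' and ``$\phi$ integral and injective'' interchangeably.

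For the implication $\phi$ integral and injective $\implies f$ flat, I would prove directly that $\ZZ[Q]$ is a flat $\ZZ[P]$-module by realizing it as a direct sum of filtered unions of free rank-one modules. Write $x^{p}$ ($p\in P$) and $y^{q}$ ($q\in Q$) for the monomial bases, so that the $\ZZ[P]$-module structure is $x^{p}\cdot y^{q}=y^{\phi(p)+q}$. Identify $P$ with its image and grade $\ZZ[Q]$ by the abelian group $\Gamma=Q^{gp}/\phi^{gp}(P^{gp})$; since $\phi(P)$ lies in degree $0$ this is a grading by $\ZZ[P]$-modules, so $\ZZ[Q]=\bigoplus_{c\in\Gamma}M_c$ with $M_c=\bigoplus_{q\in Q_c}\ZZ\,y^q$ and $Q_c=\{q\in Q:\bar q=c\}$, and it suffices to show each $M_c$ is flat. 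Order $Q_c$ by divisibility, $q\preceq q'$ iff $q'\in\phi(P)+q$; each principal submodule $\ZZ[P]\cdot y^{q}$ is free of rank one (by injectivity of $p\mapsto\phi(p)+q$, which is cancellation in $Q$), and $M_c$ is the union of these. The key point is that integrality of $\phi$ makes $(Q_c,\preceq)$ downward directed: two elements $q_1,q_2\in Q_c$ differ by an element of $\phi^{gp}(P^{gp})$, so $\phi(p_1)+q_1=\phi(p_2)+q_2$ in $Q$ for suitable $p_i\in P$, and Definition~\ref{df:int-mor} produces a common lower bound $q'$ with $q_1,q_2\in\phi(P)+q'$. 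Hence $M_c$ is a filtered union of free rank-one $\ZZ[P]$-modules, so it is a filtered colimit of free modules and therefore flat by Lazard's theorem.

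The converse $f$ flat $\implies\phi$ integral and injective is the hard part, and is essentially Kato's flatness criterion for monoid algebras. For injectivity I would use that $\ZZ[P]$ is reduced (it embeds in $\ZZ[P^{gp}]$, the group algebra of an abelian group, which is reduced), so every non-zero-divisor of $\ZZ[P]$ acts injectively on the flat module $\ZZ[Q]$: if $\phi(p_1)=\phi(p_2)$ with $p_1\neq p_2$, then $\theta=x^{p_1}-x^{p_2}$ annihilates $y^{0}\neq 0$, while $\theta$ is a non-zero-divisor in $\ZZ[P]$ (this is clean when $P^{gp}$ is torsion-free, since then $x^{p_1-p_2}-1$ is a non-zero-divisor; the torsion case needs the extra bookkeeping already present in Kato's argument), forcing $p_1=p_2$. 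For integrality I would run the colimit description above in reverse: were $\phi$ not integral, some indexing poset $Q_c$ would fail to be downward directed, and the resulting relation $x^{p_1}y^{q_1}-x^{p_2}y^{q_2}=0$ in $M_c$ could not be resolved by the equational criterion for flatness, contradicting flatness of $\ZZ[Q]$. I expect this converse --- extracting the combinatorial splitting data of Definition~\ref{df:int-mor} from module-theoretic flatness via the equational criterion --- to be the main obstacle, and in the final write-up I would either carry out that criterion in detail or invoke Kato's theorem (and Ogus' treatment) for the equivalence $f$ flat $\iff\phi$ integral and injective.
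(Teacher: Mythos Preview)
The paper's own proof is a bare citation to Ogus, so your proposal is considerably more explicit. Your combinatorial equivalence (flat $\iff$ integral and injective) is correct and uses only cancellativity, and your forward ring-theoretic implication via the $Q^{gp}/\phi^{gp}(P^{gp})$-grading and Lazard is the standard argument and is carried out cleanly. For the converse you end up deferring to Kato/Ogus, which is exactly where the paper's citation lands, so in substance the two approaches converge; yours just unpacks the easy half.

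One point to tighten: your heuristic for extracting injectivity from ring-theoretic flatness via the non-zero-divisor $x^{p_1}-x^{p_2}$ genuinely fails in the torsion case, not merely as a matter of bookkeeping. In $\ZZ[\ZZ/n\ZZ]$ one has $(t-1)(1+t+\cdots+t^{n-1})=0$, so $t-1$ is a zero-divisor; worse, over a field of characteristic prime to $|P^{gp}_{\mathrm{tors}}|$ the implication ``$f$ flat $\Rightarrow\phi$ injective'' is outright false (e.g.\ $P=Q=\ZZ/2\ZZ$, $\phi=0$, over $\QQ$: then $\QQ[P]\cong\QQ\times\QQ$ is semisimple and every module is flat). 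The statement over $\ZZ$ survives precisely because $\ZZ[P]$ is not semisimple, and the argument really does need the full Kato criterion rather than a zero-divisor trick. Since you already plan to invoke Ogus for this direction, this is not a gap in your final write-up, but you should drop the injectivity sketch rather than present it as something that only needs patching.
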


\begin{proof}
See \cite[Remark 4.6.6]{Ogus}.
\end{proof}

\subsection*{Root stacks}

\begin{df}
Let $\Li\colon \PP\to \DIV_{\SEt}$ be a symmetric monoidal functor and $j\colon \PP\to \Q$ a Kummer homomorphism of sheaves of monoids. This will be referred to as a \emph{Deligne--Faltings datum}.
\end{df}

\begin{df}[{\cite[Definition 4.16]{Borne-Vistoli}}]
Let $\Li\colon \PP\to \DIV_{\SEt}$ be a symmetric monoidal functor and $j\colon \PP\to \Q$ a homomorphism of sheaves of monoids. The \emph{root stack} associated to this Deligne--Faltings datum, denoted $S_{\PP,\Q,\Li}$ or $S_{\Q/\PP}$ is the fibered category over $S$ associated with the following pseudo-functor:
Let $f\colon T\to S$ be a morphism of schemes. This gives a symmetric monoidal functor $f^*\Li\colon f^*\PP\to \DIV_{T_{\et}}$ by pulling back $\Li$.
We also get a morphism of sheaves of monoids $f^*\PP\to f^*\Q$ and we define the category $(f^*\Li)(f^*\Q/f^*\PP)$ with
\begin{enumerate}
\item objects: pairs $(\Ei,\alpha)$, where $\Ei\colon f^*\Q\to \DIV_{T_\et}$ is a symmetric monoidal functor, and $\alpha\colon f^*\Li\to \Ei\circ f^*j$ is an isomorphism of symmetric monoidal functors. This is pictured in the following diagram: 
  \[
    \begin{tikzcd}f^*\PP\arrow{r}[name=U]{{f^*\Li}}\ar{d}[left]{f^*j}  & \DIV_{T_\et}  \\ f^*\Q\arrow[Rightarrow, from=U, shorten <>=10pt]{}[swap]{\alpha}\ar{ur}[below]{{\Ei}} & \,,
    \end{tikzcd}
  \]
\item morphisms $(\Ei',\alpha')\to (\Ei,\alpha)$ given by an isomorphism $\Ei'\to \Ei$ such that the diagram
  \[
    \begin{tikzcd}[row sep=1.5em, column sep=1.5em] & f^*\Li\ar{ddr}{\alpha}\ar{ddl}[swap]{\alpha'} & \\
    & & \\
    \Ei'\circ f^*j\ar{rr} & & \Ei\circ f^*j
    \end{tikzcd}
  \]
commutes.
The category $(f^*\Li)(f^*\Q/f^*\PP)$ is obviously a groupoid.
\end{enumerate}
If we have a commutative diagram of schemes
  \[\begin{tikzcd}[row sep=1.5em, column sep=1.5em]
    T'\ar{rr}{h}\ar{ddr}[swap]{f'} & & T\ar{ddl}{f} \\
    & & & \\
    & S &
  \end{tikzcd}\]
and an object $(\Ei,\alpha)$ in $(f^*\Li)(f^*\Q/f^*\PP)$, then we get a symmetric monoidal functor \[h^*\Ei\colon h^*f^*\PP=f'^*\PP\to \DIV_{T'_\et}\,.\]
We may also pull back $\alpha\colon f^*\Li\to \Ei\circ f^*j$ to an isomorphism $h^*\alpha\colon h^*f^*\Li\to h^*\Ei\circ h^*f^*j$
and if we pre-compose with the natural isomorphism $f'^*\Li\to h^*f^*\Li$ we get a natural isomorphism
  \[
    h^*\alpha\colon f'^*\Li\to h^*\Ei\circ f'^*j\,.
  \]
The pair $(h^*\Ei,h^*\alpha)$ defines an object in $(f'^*\Li)(f'^*\Q/f'^*\PP)$. This defines a functor \[h^*\colon (f^*\Li)(f^*\Q/f^*\PP) \to (f'^*\Li)(f'^*\Q/f'^*\PP)\] on the level of objects.
If $(\Ei',\alpha')\to (\Ei,\alpha)$ is a morphism in $(f^*\Li)(f^*\Q/f^*\PP)$, then via pullback we get an isomorphism $h^*\Ei'\to h^*\Ei$ such that the corresponding diagram over $(f')^*\Li$ commutes.

This is the pseudo-functor corresponding to the root stack $S_{\PP,\Q,\Li}$.
\end{df}

\begin{df}
When $(\PP,\Q,\Li)$ is a Deligne--Faltings datum and $\Li^a\colon \PP^a\to \DIV_S$ the associated Deligne--Faltings structure (see \cite[Proposition 3.3]{Borne-Vistoli}) then we write $\PP^a\to \Q^a$ for the pushout of \[\begin{tikzcd}\PP\ar{r}\ar{d} & \PP^a \\ \Q & \,.\end{tikzcd}\]
\end{df}

\begin{lem}
Let $\PP\to \Q$ be an integral homomorphism of monoids. Then $\PP^a\to \Q^a$ is integral.
\end{lem}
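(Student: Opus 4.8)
The plan is to recognise $\PP^a\to\Q^a$ as a cobase change of $\PP\to\Q$ and then to prove that integral morphisms of monoids are stable under cobase change. By the definition of $\Q^a$ as the pushout of the span $\PP^a\leftarrow\PP\to\Q$, the morphism $\PP^a\to\Q^a$ is exactly the pushout of $\PP\to\Q$ along the structural map $\PP\to\PP^a$. Since integrality of a morphism of sheaves of monoids (Definition \ref{df:int-mor}) is phrased purely in terms of elements and may therefore be checked on stalks, and since the stalk of a pushout of sheaves of monoids is the pushout of the stalks, it suffices to prove the following statement about honest monoids: if $\phi\colon P\to Q$ is integral and $\psi\colon P\to P'$ is arbitrary, then the induced map $\phi'\colon P'\to Q':=Q\oplus_P P'$ is integral.

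First I would make the pushout explicit as $Q'=(Q\oplus P')/{\sim}$, writing $[q,r]$ for classes, so that $\phi'(r)=[0,r]$ and the canonical map from $Q$ is $q\mapsto[q,0]$. Given a relation $\phi'(p_1)+w_1=\phi'(p_2)+w_2$ with $w_i=[q_i,r_i]$, unwinding the defining equivalence produces $\alpha,\beta\in P$ with $q_1+\phi(\alpha)=q_2+\phi(\beta)$ in $Q$ and $p_1+r_1+\psi(\beta)=p_2+r_2+\psi(\alpha)$ in $P'$. The first of these is precisely of the shape to which integrality of $\phi$ applies, yielding $c_1,c_2\in P$ and $q'\in Q$ with $q_i=\phi(c_i)+q'$ and $\alpha+c_1=\beta+c_2$. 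Using the pushout relation $[\phi(c_i)+q',r_i]=[q',r_i+\psi(c_i)]$, I would then take $w'=[q',0]$ and $\bar p_i=r_i+\psi(c_i)$, for which the two conditions $w_i=\phi'(\bar p_i)+w'$ hold immediately; only the source equation $p_1+\bar p_1=p_2+\bar p_2$ then remains to be verified.

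The hard part will be this last equation. A direct manipulation with $\alpha+c_1=\beta+c_2$ gives $p_1+\bar p_1+\psi(\beta)=p_2+\bar p_2+\psi(\beta)$, so the identity I want follows as soon as $\psi(\beta)$ may be cancelled in $P'$. When $P'$ is integral this is automatic, and it holds whenever the associated monoid $\PP^a$ is cancellative. In the general (merely quasi-integral) situation cancellation can fail, and I expect this to be the single genuine obstacle; I would handle it by invoking the stability of integral morphisms under cobase change from \cite{Ogus} — the same source used for Lemma \ref{lem:flat} — whose argument bypasses cancellation, for instance through the flatness criterion together with $\ZZ[\Q^a]\cong\ZZ[\Q]\otimes_{\ZZ[\PP]}\ZZ[\PP^a]$ and the preservation of flatness under base change. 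The explicit computation above is then the concrete shadow of that result, and everything outside the cancellation step is a routine unwinding of the pushout. Finally, the stalkwise reduction of the first paragraph returns the statement from monoids to sheaves.
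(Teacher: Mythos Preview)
Your proposal is correct and lands on the same argument as the paper: the paper's proof is simply a citation to \cite[Proposition 4.6.3(1)]{Ogus}, which is precisely the stability of integral morphisms under cobase change, applied to the defining pushout $\Q^a=\Q\oplus_\PP\PP^a$. Your explicit unwinding is a nice illustration of why the cancellation issue makes a bare-hands proof awkward, though your aside about recovering the result via the flatness criterion and $\ZZ[-]$ is slightly off---Lemma \ref{lem:flat} only identifies ``flat'' with ``integral and injective'' for \emph{integral} (cancellative) monoids, so one cannot bootstrap the general cobase-change statement from it; the Ogus reference is really needed as stated.
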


\begin{proof}
See \cite[Proposition 4.6.3(1)]{Ogus}.
\end{proof}

\begin{prop}\label{prop:flat-crit}
Let $\PP\to \Q$ be a homomorphism of fine sheaves of monoids. The stack $S_{\PP,\Q,\Li}$ is flat over $S$ if for every geometric point $\bar{x}\in S$, the morphism $\PP_{\bar{x}}^a\to \Q_{\bar{x}}^a$ is integral.
\end{prop}

\begin{proof}
This can be checked \'{e}tale locally. Hence we may assume that there is a chart \[\xymatrix{P\ar[r]\ar[d] & Q\ar[d] \\ \PP\ar[r] & \Q\,.}\] By \cite[Proposition 4.18]{Borne-Vistoli} we have that $S_{P,Q,\Li}$ is isomorphic to the stack $S\times_{[\spec \ZZ[P]/D(P)]}[\spec \ZZ[Q]/D(Q)]$ where $D(P),D(Q)$ denotes the Cartier duals of $P$ and $Q$
respectively (or equivalently, of $P^{gr}$ and $Q^{gr}$), and the action is given by the obvious grading of $\ZZ[P]$ and $\ZZ[Q]$ respectively. We have a commutative diagram \[\xymatrix{[\spec \ZZ[Q]/G]\ar[r]\ar[d] & [\spec\ZZ[Q]/D(Q)]\ar[d] \\ \spec\ZZ[P]\ar[r] & [\spec \ZZ[P]/D(P)]}\] where $G=D(Q^{gp}/P^{gp})$. Since $D(P)$ is fppf over $S$, we get that $\spec\ZZ[Q]\to \spec\ZZ[P]$ is flat $\Leftrightarrow$
$[\spec\ZZ[Q]/D(Q)]\to [\spec\ZZ[P]/D(P)]$ is flat $\Rightarrow$ $S_{P,Q,\Li}\to S$ is flat.
By \cite[Proposition 3.17]{Borne-Vistoli}, we may assume that $P\cong \PP_{\bar{x}}^a$ and $Q\cong \Q_{\bar{x}}^a$ and hence we are done by Lemma \ref{lem:flat}.
\end{proof}

\begin{rk}
  The converse of Proposition \ref{prop:flat-crit} does not hold as the flatness of $S_{P,Q,\Li}\to S$ does also depend on $\Li$. Consider for example the cuspidal cubic $S=\spec R$, where $R=\CC[t^2, t^3]\subseteq \CC[t]$. Let $P=\langle 2,3\rangle \hookrightarrow Q=\NN$ be the submonoid generated by the elements 2 and 3. Then the inclusion $P\hookrightarrow Q$ is clearly not integral and hence not flat. If we take $\Li$ to be the symmetric monoidal functor generated by sending $2\mapsto (\oj_S, t^2)$ and $3\mapsto (\oj_S, t^3)$, then the root stack $S_{P,Q,\Li}$ is the normalization of $S$ which is not flat. On the other hand, if we let $\Li$ be the symmetric monoidal functor generated by $2\mapsto (\oj_S, 0)$ and $3\mapsto (\oj_S, 0)$, then it is not hard to see that $S_{P,Q,\Li}\cong \spec (\oj_S[\varepsilon]/(\varepsilon^2))$ which is clearly flat over $S$.  
\end{rk}

\section{Special Deligne--Faltings data}\label{sec:sp-DF-data}
Let $A$ be a finite abelian group. The idea of the following section is to introduce monoids $P_A$ and $Q_A$ together with a homomorphism $\gamma_A\colon P_A\to Q_A$ such that every $D(A)$-cover $X\to S$ will give rise to a symmetric monoidal functor $\Li_X\colon P_A\to \DIV S$ and such that the root stack associated to the Deligne--Faltings datum
  \[\begin{tikzcd}
    P_A\ar{r}{\Li_X}\ar{d}{\gamma_A} & \DIV S \\ Q_A &
  \end{tikzcd}\]
is isomorphic to $[X/D(A)]$.
The monoids $P_A,Q_A$ and the morphism $\gamma_A$ will depend only on the group $A$ but $\Li_X$ will depend on $X$ and the action of $D(A)$.

\subsection*{Free extensions and 2-cocycles}
Whenever we write \emph{monoid} we mean \emph{commutative} monoid.

\begin{rk}
Recall that an action of a monoid $P$ on a set $S$, written $(p,s)\mapsto ps$, is \emph{free} if there exists a basis $T\subseteq S$. That is, a subset $T\subseteq S$ such that the induced function $P\times T\to S$ sending $(p,t)$ to $pt$ is a bijection.
If $Q$ is a monoid and $P$ a submonoid, we get an action of $P$ on $Q$ by addition.
\end{rk}

\begin{df}\label{df:ext}
Let $A$ be an abelian group and $P$ a monoid. A \emph{free extension of $A$ by $P$} (with a chosen basis) is an exact sequence $E$ of monoids
\[
  0\to P\xrightarrow{\gamma} Q \xrightarrow{m} A\to 0
\]
together with a set-theoretic section  $\iota\colon A\to Q$ such that $\iota(0)=0$ and $Q$ is free over $P$ with basis $\iota(A)$. This means that the function $\varphi_E\colon P\times A\to Q$ sending $(p,\lambda)$ to $\gamma(p)+\iota(\lambda)$ is a bijection.
\end{df}

\begin{rk}\label{rk:mon-ram-cov}
  Note that, given a free extension as in Definition \ref{df:ext}, we get a splitting of $\ZZ[Q]$ as a $\ZZ[P]$-module
  \[
  \ZZ[Q]\cong \bigoplus_{\lambda\in A}\ZZ[P]\,,
  \]
  where $(p,\lambda)$ has degree $\lambda$. Hence $\spec\ZZ[Q]\to \spec\ZZ[P]$ is a ramified $D(A)$-cover (Definition \ref{def:cover}).
\end{rk}

\begin{rk}
Definition \ref{df:ext} also makes sense when $A$ is just a monoid. A free extension of $A$ with values in $P$ without the choice of a section $\iota$ is called a \emph{Schreier extension} (see \cite{Redei, Strecker, Inasaridze_1965} or \cite[Definition 4.1]{Patchkoria_2018}).
\end{rk}

\begin{rk}
When $P$ is sharp and $Q$ is quasi-integral, the section $\iota$ of Definition \ref{df:ext} is uniquely determined. Indeed, if we have sections $\iota_1$ and $\iota_2$, both making $Q$ free over $P$ with basis $\iota_1(A)$ and $\iota_2(A)$ respectively, then for any $\lambda\in A$, there are elements $p_1$ and $p_2$ in $P$ such that $p_2+\iota_1(\lambda)=\iota_2(\lambda)$ and $p_1+\iota_2(\lambda)=\iota_1(\lambda)$. Hence $p_1+p_2+\iota_1(\lambda)=p_1+\iota_2(\lambda)=\iota_1(\lambda)$. But then $p_1+p_2=0$ since $Q$ is quasi-integral and $p_1=p_2=0$ since $P$ is sharp.   
\end{rk}

\begin{df}
A 1-morphism of extensions from $0\to P\xrightarrow{\gamma} Q \xrightarrow{m} A\to 0$ to $0\to P\xrightarrow{\gamma'} Q' \xrightarrow{m'} A\to 0$, with sections $\iota$ and $\iota'$ respectively, is an isomorphism $\varphi\colon Q\to Q'$ such that the diagrams 
  \[
    \begin{tikzcd}
      P\ar{r}{\gamma}\ar[equal]{d} & Q \ar{r}{m}\ar{d}{\varphi} & A \ar[equal]{d} \\
      P\ar{r}{\gamma'} & Q' \ar{r}{m'} & A
    \end{tikzcd}  
    \,,\quad 
    \begin{tikzcd}
      Q \ar{d}{\varphi} & \ar{l}[swap]{\iota} A \ar[equal]{d}\\
      Q' & \ar{l}[swap]{\iota'} A
    \end{tikzcd}
  \]
commute.
We denote by $\ExtAP$ the 1-category of free extensions of $A$ by $P$ with 1-morphisms between them.
\end{df}

Recall that $\CAP$ denotes the set of commutative 2-cocycles (Definition \ref{def:cap}). The following proposition shows that $\ExtAP$ is naturally equivalent to a set. 

\begin{prop}\label{prop:ext-coc}
There is an equivalence
\[
\ExtAP\simeq \CAP\,.
\]
\end{prop}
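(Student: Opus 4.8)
The plan is to construct mutually inverse maps between the category $\ExtAP$ and the set $\CAP$, and to check that the construction on $\ExtAP$ is constant on isomorphism classes, so that $\ExtAP$ is equivalent to a set that is in bijection with $\CAP$. Given a free extension $0\to P\xrightarrow{\gamma} Q\xrightarrow{m} A\to 0$ with section $\iota\colon A\to Q$, I would define a function $f\colon A\times A\to P$ as follows. For $\lambda,\lambda'\in A$ the element $\iota(\lambda)+\iota(\lambda')$ lies in the fiber of $m$ over $\lambda+\lambda'$, which is $\gamma(P)+\iota(\lambda+\lambda')$ by the defining bijection $\varphi_E\colon P\times A\to Q$; hence there is a unique $p\in P$ with
\[
  \iota(\lambda)+\iota(\lambda')=\gamma(p)+\iota(\lambda+\lambda')\,,
\]
and I set $f(\lambda,\lambda')=p$. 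Uniqueness of $p$ is exactly the freeness of $Q$ over $P$ with basis $\iota(A)$ together with the injectivity of $\gamma$.

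**Verifying the cocycle axioms.**
Next I would check that $f$ satisfies the three axioms of Definition \ref{df:coc}. Axiom (1), $f(0,\lambda)=0$, follows from $\iota(0)=0$. Axiom (2), symmetry, follows because $Q$ is commutative, so $\iota(\lambda)+\iota(\lambda')=\iota(\lambda')+\iota(\lambda)$ and the defining $p$ is the same. For the associativity axiom (3) I would compute $\iota(\lambda)+\iota(\lambda')+\iota(\lambda'')$ in two ways, grouping first the pair $(\lambda,\lambda')$ and then $(\lambda',\lambda'')$, and read off the resulting expressions as $\gamma\bigl(f(\lambda,\lambda')+f(\lambda+\lambda',\lambda'')\bigr)+\iota(\lambda+\lambda'+\lambda'')$ versus $\gamma\bigl(f(\lambda',\lambda'')+f(\lambda+\lambda'+\lambda'',\dots)\bigr)+\cdots$; cancelling $\iota(\lambda+\lambda'+\lambda'')$ and using injectivity of $\gamma$ yields the relation. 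This uses associativity and commutativity of addition in $Q$ and, once more, the uniqueness coming from freeness.

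**The inverse construction.**
Conversely, given $f\in\CAP$, I would build an extension by putting $Q=P\times A$ as a set and defining addition by
\[
  (p,\lambda)+(p',\lambda')=(p+p'+f(\lambda,\lambda'),\lambda+\lambda')\,,
\]
with $\gamma(p)=(p,0)$, $m(p,\lambda)=\lambda$, and $\iota(\lambda)=(0,\lambda)$. The cocycle axioms are precisely what is needed for this to be a commutative monoid with unit $(0,0)$: axiom (3) gives associativity, axiom (2) gives commutativity, and axiom (1) ensures $(0,0)$ is the identity and that $\iota(0)=0$. The sequence is exact by construction and $\varphi_E$ is tautologically the identity bijection, so this is a genuine object of $\ExtAP$.

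**Checking the two constructions are inverse and well-definedness on isomorphism classes.**
Finally I would verify that the two assignments are mutually inverse: starting from $f$, forming $Q=P\times A$, and extracting the cocycle returns $f$ on the nose; and starting from an extension $(Q,\iota)$, the map $\varphi_E^{-1}\colon Q\to P\times A$ is an isomorphism of extensions to the standard model built from its cocycle, compatible with sections. The one point requiring a little care — and the step I expect to be the main (though still mild) obstacle — is showing that a $1$-morphism of extensions $\varphi\colon Q\to Q'$ commuting with both $\gamma,\gamma'$ and the sections $\iota,\iota'$ forces the two associated cocycles to be \emph{equal}, not merely cohomologous. This is where the rigidity built into the definition of a $1$-morphism (that it respects the chosen sections) is essential: applying $\varphi$ to $\iota(\lambda)+\iota(\lambda')=\gamma(f(\lambda,\lambda'))+\iota(\lambda+\lambda')$ and using $\varphi\circ\iota=\iota'$ and $\varphi\circ\gamma=\gamma'$ gives $\iota'(\lambda)+\iota'(\lambda')=\gamma'(f(\lambda,\lambda'))+\iota'(\lambda+\lambda')$, whence $f'=f$ by uniqueness. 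This shows every object is isomorphic to the standard model of its cocycle by a \emph{unique} such morphism, so $\ExtAP$ is equivalent to the discrete category $\CAP$, completing the proof.
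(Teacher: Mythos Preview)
Your proof is correct and follows essentially the same approach as the paper. The paper packages the definition of the cocycle as the composite $A\times A\xrightarrow{\iota\times\iota} Q\times Q\xrightarrow{\nabla} Q\xrightarrow{r} P$ where $r$ is the retraction coming from $\varphi_E^{-1}$, which is exactly your unique $p$; the inverse construction $Q=P\times_f A$ is identical, and the paper likewise notes that isomorphic extensions yield the same cocycle, leaving the remaining verifications to the reader.
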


\begin{proof}
Let $E$ be a free $A$-extension $0\to P\xrightarrow{\gamma} Q \xrightarrow{m} A\to 0$ with basis $\iota\colon A\to Q$ and let $\varphi_E\colon P\times A\to Q$ be the induced bijection.
Let $\nabla\colon Q\times Q\to Q$ be the addition. Then we get a canonical function
\[
f_E\colon A\times A\to Q\times Q\xrightarrow{\nabla}Q\to P
\]
where $A\times A\to Q\times Q$ and $r\colon Q\to P$ are the canonical inclusion and projection respectively, obtained via $\varphi_E$.
Since $Q$ is commutative and associative and since \[A\xrightarrow{\iota}Q\xrightarrow{r}P\] is zero we conclude that $f_E\colon A\times A\to P$ is a 2-cocycle.
We define
\[\begin{split}\Psi\colon \ExtAP & \to\CAP \\
E & \mapsto f_E
\end{split}\]
on objects. Two isomorphic extensions will give the same 2-cocycle and hence $\Psi$ is in fact a functor. 

Conversely, given a 2-cocycle $f\colon A\times A\to P$, define a monoid $Q=P\times_fA$ with underlying set $P\times A$ and addition given by
\[
(p,\lambda)+(p',\lambda')=(p+p'+f(\lambda,\lambda'),\lambda+\lambda')\,.
\]
This gives a free $A$-extension $0\to P\to Q\to A\to 0$ with $\gamma\colon P\to Q$ and $\iota\colon A\to Q$ the canonical inclusions. Hence we get a functor $\Theta\colon \CAP \to \ExtAP$.
We leave to the reader to check that $\Theta$ and $\Psi$ are quasi-inverse to each other.
\end{proof}

\begin{rk}
Note that the bijection in Proposition \ref{prop:ext-coc} provides $\ExtAP$ with the structure of a monoid.
\end{rk}



\subsection*{The universal extension and the universal 2-cocycle}
Let $A$ be an abelian group. We will define a \emph{universal} free extension $E_A$ of $A$
\[
0\to P_A\to Q_A\to A\to 0
\]
such that for any extension $E : 0\to P\to Q\to A\to 0$, there exists unique morphisms $P_A\to P$ and $Q_A\to Q$ such that the diagram
  \[
    \begin{tikzcd}
      P_A \ar{r}\ar{d} & P\ar{d} \\
      Q_A\ar{r}\ar{d} & Q\ar{d} \\
      A\ar[equal]{r} & A
    \end{tikzcd}
  \]
commutes.
The corresponding 2-cocycle $A\times A\to P_A$ is called the \emph{universal} 2-cocycle of $A$.

\begin{df}[{{\cite[Definition 4.1]{Tonini}}}]\label{df:congr-rel}
Let $R_A\subset \NGG\times\NGG$ be the congruence relation generated by the relations \[\begin{split}e_{\lambda,\lambda'} & \sim e_{\lambda',\lambda} \\ e_{0,\lambda} & \sim 0 \\ e_{\lambda,\lambda'}+e_{\lambda+\lambda',\lambda''} & \sim e_{\lambda',\lambda''}+e_{\lambda'+\lambda'',\lambda} \,.\end{split}\]
\end{df}

\begin{df}\label{df:P}
We define $P_A = \N^{\GG}/R_A$.
\end{df}

\begin{rk}\label{rk:uni-coc}
There is a function $e_{(-,-)}\colon A\times A\to P_A$ sending $(\lambda,\lambda')$ to $e_{\lambda,\lambda'}$ which by definition of $R_A$ is a 2-cocycle. This will be referred to as the \emph{universal} 2-cocycle. One immediately checks that any 2-cocycle $A\times A\to P$ factors uniquely through
$e_{(-,-)}\colon A\times A\to P_A$.
\end{rk}


\begin{df}\label{df:Q}
We define
$Q_A=P_A\times_{e_{(-,-)}} A$\,,
i.e., $Q_A$ is the monoid in the universal free extension of $A$ corresponding to the universal 2-cocycle $e_{(-,-)}\colon A\times A\to P_A$.
That is, $Q_A$ has underlying set $P_A\times A$ with addition $(p,\lambda)+(p',\lambda')=(p+p'+e_{\lambda,\lambda'},\lambda+\lambda')$.
Let $\gamma_A\colon P_A\to Q_A$ be the canonical inclusion $p\mapsto (p,0)$.
\end{df}

\begin{lem}\label{lem:free-kummer}
  Let $A$ be a finite abelian group. The morphism $\gamma_A\colon P_A\to Q_A$ is a Kummer homomorphism and the induced action of $P_A$ on $Q_A$ is free.
  \end{lem}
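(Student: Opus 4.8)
The plan is to verify the two conditions of Definition \ref{df:Kummer} for $\gamma_A$ directly from the explicit description of $Q_A$ given in Definition \ref{df:Q}, and then to exhibit a concrete basis to settle the freeness claim. Since the underlying set of $Q_A$ is $P_A\times A$ and $\gamma_A(p)=(p,0)$, injectivity of $\gamma_A$ is immediate: if $(p,0)=(p',0)$ then $p=p'$. This disposes of condition (1) of the Kummer property with no work.

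For condition (2), I would fix an arbitrary $q=(p,\lambda)\in Q_A$ and compute its positive multiples using the addition rule $(a,\mu)+(a',\mu')=(a+a'+e_{\mu,\mu'},\mu+\mu')$. A short induction on $k$ yields, for every $k\geq 1$,
\[
  kq=\Bigl(kp+\sum_{i=1}^{k-1}e_{i\lambda,\lambda},\,k\lambda\Bigr)\,.
\]
The point is then that choosing $n$ to be any multiple of the order of $\lambda$ in $A$ (for instance $n=|A|$, which works uniformly) makes the second coordinate $n\lambda$ vanish, so that
\[
  nq=\Bigl(np+\sum_{i=1}^{n-1}e_{i\lambda,\lambda},\,0\Bigr)=\gamma_A\Bigl(np+\sum_{i=1}^{n-1}e_{i\lambda,\lambda}\Bigr)\,.
\]
This realizes $nq$ in the image of $\gamma_A$ for some $n\in\N$ and $p'\in P_A$, which is exactly condition (2). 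Hence $\gamma_A$ is Kummer.

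For freeness of the induced action, recall that $P_A$ acts on $Q_A$ by addition through $\gamma_A$, so $p'\cdot(p,\lambda)=(p',0)+(p,\lambda)=(p'+p,\lambda)$, using the cocycle relation $e_{0,\lambda}=0$. I would then take $T=\iota(A)=\{(0,\lambda):\lambda\in A\}$ as the candidate basis: the induced map $P_A\times T\to Q_A$ sends $(p',(0,\lambda))$ to $(p',\lambda)$, which is a bijection onto $P_A\times A=Q_A$. Thus $\iota(A)$ is a basis and the action is free. In fact this is nothing but the free-extension structure already built into $Q_A=P_A\times_{e_{(-,-)}}A$ via Definition \ref{df:ext} and Proposition \ref{prop:ext-coc}, so freeness holds essentially by construction.

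There is no substantive obstacle in this lemma; the only point requiring any care is the inductive computation of $nq$ together with the observation that taking $n$ divisible by $\ord(\lambda)$ annihilates the $A$-coordinate and thereby pushes $nq$ back into $\gamma_A(P_A)$. Finiteness of $A$ is used precisely at this step, guaranteeing that such an $n$ exists.
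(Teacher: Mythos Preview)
Your proof is correct and follows essentially the same approach as the paper: the paper computes $\ord(\lambda)(p,\lambda)=(\ord(\lambda)p+e_{\lambda,\lambda}+e_{2\lambda,\lambda}+\dots+e_{(\ord(\lambda)-1)\lambda,\lambda},0)$ to establish the Kummer property and then says the rest follows readily from the definitions. You have simply unpacked this in more detail, including the explicit basis for freeness, but the argument is the same.
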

  
  \begin{proof}
  For any $(p,\lambda)\in Q_A$ we have $\ord(\lambda)(p,\lambda)=(\ord(\lambda)p+e_{\lambda,\lambda}+e_{2\lambda,\lambda}+\dots+e_{(\ord(\lambda)-1)\lambda,\lambda},0)$. The rest follows readily from the definitions.
  \end{proof}

\subsection*{The monoid $Q_A^+$}

There are two monoid homomorphisms
  \[
    \Sigma, \Pi\colon \mathbb{N}^{\GG}\to \mathbb{N}^A
  \]
which are defined on the basis by
  \[\begin{split}
    \Sigma(e_\lala) & =e_\lambda+e_{\lambda'} \\ \Pi(e_\lala) & = e_{\lambda+\lambda'}\,.
  \end{split}\]
Consider the induced monoid homomorphism
  \[
    \NGG\xrightarrow{(\Sigma,\Pi)}\NGxGo\,.
  \]
We have that
  \[
    (\NGxGo)/(\Sigma,\Pi)(R_A)\cong \mathbb{Z}^A/\langle e_0\rangle\,,
  \]
where $(\Sigma,\Pi)(R_A)$ is the induced congruence relation on $\NGxGo$.
Indeed, $0\sim e_{0,\lambda}$ maps to $0\sim(e_0+e_\lambda,e_\lambda)= (e_\lambda,e_\lambda)$ in $\NGxGo$ and $\mathbb{Z}^A/\langle e_0\rangle$ is obtained as the quotient of $\NGxGo$ by the diagonal.
The induced map is \[\begin{split}\varphi_A\colon P_A & \to \ZZ^A/\langle e_0\rangle \\ e_{\lambda,\lambda'} & \mapsto e_\lambda+e_{\lambda'}-e_{\lambda+\lambda'}\,.\end{split}\]

\begin{rk}
The monoid $P_A$ need not be integral. For instance, it is not integral when $A=\ZZ/2\ZZ\times \ZZ/2\ZZ\times \ZZ/2\ZZ$ (here we used Macaulay 2). This means that the morphism $\varphi_A$ need not be injective.
\end{rk}

\begin{df}
Define $m\colon \ZZ^A/\langle e_0\rangle\to A$ to be the group homomorphism defined on generators by $e_\lambda\mapsto \lambda$.
\end{df}

\begin{rk}\label{rk:1}
As in \cite[Definition 4.4]{Tonini} we may consider the short exact sequence of abelian groups
\[\begin{split}0\to K\to \ZZ^{A}/\langle e_0\rangle & \xrightarrow{m} A\to 0 \\ e_{\lambda} & \mapsto \lambda \,.\end{split}\]
and $\varphi_A\colon P_A\to \ZZ^{A}/\langle e_0\rangle$ factors through $P_A\to K$, which is the groupification of $P_A$ \cite[Lemma 4.5]{Tonini}.
\end{rk}

\begin{df}
We define $Q_A^+ = \N^A/\langle e_0\rangle\,.$
\end{df}

\begin{rk}\label{rk:2}
  By \cite[Lemma 4.5]{Tonini} $P_A\to \varphi_A(P_A)$ is the associated integral monoid and hence we identify $P_A^{int}$ with $\varphi_A(P_A)$.
  Similarly, the map $Q_A\to \ZZ^A/\langle e_0\rangle\,;\ (p,\lambda)\mapsto \varphi_A(p)+e_{\lambda}$ is a homomorphism and the image
  is the associated integral monoid
  $Q_A^{int}\cong\langle P_A^{int},Q_A^+\rangle\subset \ZZ^A/\langle e_0\rangle$. Also $Q_A^{int}\cong P_A^{int}\times_{e_{(-,-)}} A$.
  \end{rk}

  \begin{ex}
  If $A=\ZZ/3\ZZ$, then we have $P_A^{int}\subset Q_A^{int}\subset \ZZ^2$ with $P_A^{int}=\langle (2,-1),(-1,2)\rangle\cong \N^2$ and $Q_A^{int}=\langle (1,0),(0,1),(2,-1),(-1,2)\rangle$.
  This is illustrated in Figure \ref{fig:monoids}.
  
  \begin{figure}[h]
  \begin{center}
  \begin{tikzpicture}[scale=1]

  \foreach\l[count=\y] in {-1,0,...,5}
  {
  \draw[dashed] (-2,\y) -- (5.5,\y);
  }
  \draw[->,thick] (-2,2) -- (6,2);
  \draw[->,thick] (0,0) -- (0,8);
  \foreach \x in {-1,0,...,5}
  {
  \draw[dashed] (\x,0) -- (\x,7.5);
  }
  
  \foreach \z in {0,1,...,5}
  {
  \foreach \w in {0,1,...,5}{
  \fill[blue] (\z,\w+2) circle[radius=2pt];
  }
  }
  \fill[blue] (-1,5) circle[radius=2pt];
  \fill[blue] (-1,6) circle[radius=2pt];
  \fill[blue] (3,1) circle[radius=2pt];
  \fill[blue] (4,1) circle[radius=2pt];
  
  \draw[blue] (0,2) -- (4.5,-0.25);
  \draw[blue] (0,2) -- (-2.25,6.5);

  \fill[red] (-1,4) circle[radius=2pt];
  \fill[red] (-1,7) circle[radius=2pt];
  \fill[red] (2,1) circle[radius=2pt];
  \fill[red] (0,5) circle[radius=2pt];
  \fill[red] (5,1) circle[radius=2pt];
  \fill[red] (3,2) circle[radius=2pt];
  \fill[red] (1,3) circle[radius=2pt];
  \fill[red] (2,4) circle[radius=2pt];
  \fill[red] (3,5) circle[radius=2pt];
  \fill[red] (4,6) circle[radius=2pt];
  \fill[red] (5,7) circle[radius=2pt];
  \fill[red] (-2,6) circle[radius=2pt];
  \fill[red] (4,0) circle[radius=2pt];
  \fill[red] (1,6) circle[radius=2pt];
  \fill[red] (4,3) circle[radius=2pt];
  \fill[red] (0,2) circle[radius=2pt];
  \fill[red] (2,7) circle[radius=2pt];
  \fill[red] (5,4) circle[radius=2pt];

  \end{tikzpicture}
  \end{center}
  \caption{The red dots represents the elements of $P_A^{int}\subset Q_A^{int}$ and the blue dots represents the elements of $Q_A^{int}\setminus P_A^{int}$ with $A=\ZZ/3\ZZ$.}\label{fig:monoids}
  \end{figure}
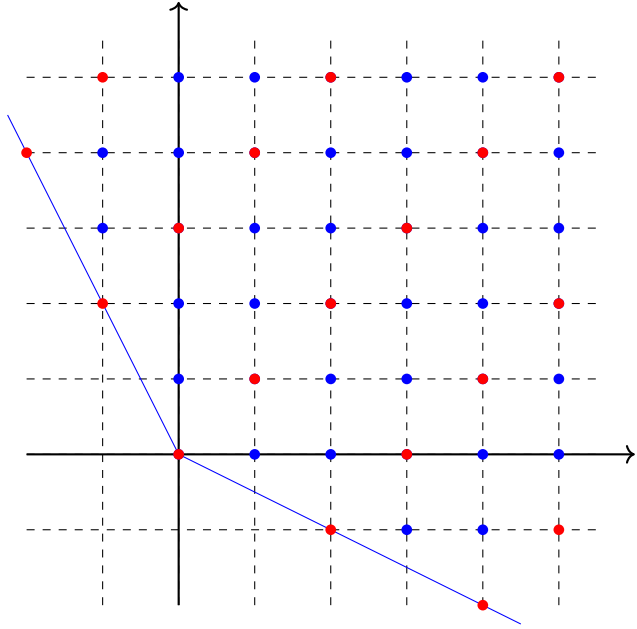
  
\end{ex}

\begin{df}
  The group homomorphism \[\ZZ^A/\langle e_0\rangle\to \ZZ \] defined on generators by sending $e_\lambda$ to 1 is written $q\mapsto |q|$ and we call $|q|$ the \emph{value} of $q$.
  \end{df}
  
  \begin{lem}\label{lem:pos-gen}
  Let $M$ be a finitely generated monoid and $v\colon M\to \ZZ$ a homomorphism. If $M$ is generated by elements $s$ such that $v(s)\geq 1$ then $M$ is sharp.
  \end{lem}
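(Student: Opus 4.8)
The plan is to show directly that the only invertible element of $M$ is $0$, by using $v$ to detect invertibility. The key observation is that $v$ is forced to be strictly positive away from the identity, and this obstructs the existence of additive inverses.

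First I would record that $v$ takes only nonnegative values on $M$. Since $M$ is generated by elements $s$ with $v(s)\geq 1$, any $m\in M$ admits a (not necessarily unique) expression $m=a_1 s_1+\dots+a_n s_n$ with $a_i\in\N$ and $v(s_i)\geq 1$; applying $v$ gives
\[
  v(m)=\sum_{i=1}^n a_i\, v(s_i)\geq 0\,.
\]
The non-uniqueness of the expression is harmless, because the inequality holds for every such representation. Moreover, since each $v(s_i)\geq 1$ and each $a_i\geq 0$, equality $v(m)=0$ forces $a_i=0$ for all $i$, hence $m=0$. Thus $v(m)=0$ if and only if $m=0$.

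The conclusion then falls out of the homomorphism property. If $m\in M^\times$, choose $m'\in M$ with $m+m'=0$; applying $v$ and using $v(0)=0$ gives $v(m)+v(m')=0$. Since both summands are nonnegative by the previous step, we get $v(m)=0$, whence $m=0$ by the strict-positivity criterion. Therefore $M^\times=\{0\}$, i.e.\ $M$ is sharp.

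I do not expect a genuine obstacle here: the argument is a two-line computation once the nonnegativity of $v$ is in place. The only point requiring a word of care is that the decomposition of an element into generators need not be unique, so one must phrase the inequality and the vanishing criterion as holding for an arbitrary chosen decomposition rather than relying on a canonical one.
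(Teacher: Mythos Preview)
Your proof is correct and follows essentially the same approach as the paper: both argue that $v$ is nonnegative on $M$ with $v(m)=0$ only if $m=0$, and then deduce sharpness from $v(m)+v(m')=0$. You have simply spelled out more carefully the step that $v(m)>0$ for $m\neq 0$, which the paper's proof leaves implicit.
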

  
  \begin{proof}
  Suppose that $s+s'=0$. Then $v(s)+v(s')=0$ and since $v(s)\geq 1$ unless $v=0$ we get that $s=s'=0$.
  \end{proof}
  
  \begin{lem}
  The monoids $P_A^{int}$ and $Q_A^{int}$ are fine and sharp.
  \end{lem}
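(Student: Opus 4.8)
The plan is to establish the three required properties separately: integrality and finite generation (which together yield \emph{fine}), and then sharpness. Throughout I would work inside the ambient group $\ZZ^A/\langle e_0\rangle$, using the identifications of Remark \ref{rk:2}, namely $P_A^{int}=\varphi_A(P_A)$ and $Q_A^{int}=\langle P_A^{int},Q_A^+\rangle$, both viewed as submonoids of $\ZZ^A/\langle e_0\rangle$.

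For integrality, I would observe that $\ZZ^A/\langle e_0\rangle$ is a free abelian group (of rank $|A|-1$, since $e_0$ is part of a basis of $\ZZ^A$), and that any submonoid $M$ of a group is integral: if $p+q=p+q'$ in $M$, one cancels $p$ in the ambient group to get $q=q'$. Both $P_A^{int}$ and $Q_A^{int}$ are such submonoids, hence integral. For finite generation, note that $P_A$ is a quotient of $\N^{A\times A}$ and $A$ is finite, so $P_A$ is finitely generated; its image $P_A^{int}=\varphi_A(P_A)$ under the homomorphism $\varphi_A$ is then finitely generated as well. Likewise $Q_A^+=\N^A/\langle e_0\rangle$ is finitely generated, being a quotient of $\N^A$, so $Q_A^{int}=\langle P_A^{int},Q_A^+\rangle$ is finitely generated. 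Combining the two properties shows both monoids are fine.

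Sharpness is where I would invoke the value homomorphism $|\cdot|\colon \ZZ^A/\langle e_0\rangle\to \ZZ$ together with Lemma \ref{lem:pos-gen}: it suffices to exhibit, for each monoid, a generating set consisting of elements $s$ with $|s|\geq 1$. Recall that $|e_\lambda|=1$ for $\lambda\neq 0$ while $|e_0|=0$. The generators of $P_A^{int}$ are the elements $\varphi_A(e_{\lambda,\lambda'})=e_\lambda+e_{\lambda'}-e_{\lambda+\lambda'}$, and their values are computed by assigning $+1$ to each of $e_\lambda,e_{\lambda'}$ with nonzero index and $-1$ to $e_{\lambda+\lambda'}$ when $\lambda+\lambda'\neq 0$. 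A short case analysis gives: the value is $0$ exactly when $\lambda=0$ or $\lambda'=0$, which are precisely the cases in which $\varphi_A(e_{\lambda,\lambda'})=0$ already; the value is $2$ when $\lambda,\lambda'\neq 0$ but $\lambda+\lambda'=0$; and the value is $1$ in all remaining cases. Thus, after discarding the redundant zero generators, $P_A^{int}$ is generated by elements of value $\geq 1$, and Lemma \ref{lem:pos-gen} shows it is sharp. For $Q_A^{int}$ I would adjoin to these the generators $e_\lambda$ ($\lambda\neq 0$) of $Q_A^+$, each of value $1$; again every nonzero generator has value $\geq 1$, so Lemma \ref{lem:pos-gen} yields sharpness of $Q_A^{int}$.

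The only genuinely computational point, and hence the main (though mild) obstacle, is the case analysis verifying $|\varphi_A(e_{\lambda,\lambda'})|\geq 1$ on the nonzero generators. In particular one must treat the degenerate possibilities $\lambda+\lambda'=0$ and $\lambda=0$ (or $\lambda'=0$) separately, since these are exactly where the naive count $1+1-1$ is altered; everything else in the argument is formal.
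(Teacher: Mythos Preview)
Your proof is correct and takes essentially the same approach as the paper: integrality comes from sitting inside the group $\ZZ^A/\langle e_0\rangle$, finite generation from being images of finitely generated monoids, and sharpness from Lemma~\ref{lem:pos-gen} applied to the value homomorphism $|\cdot|$. The only difference is a small shortcut the paper takes: since $P_A^{int}\subseteq Q_A^{int}$ and any submonoid of a sharp monoid is sharp, the paper checks sharpness only for $Q_A^{int}$ and deduces it for $P_A^{int}$, whereas you verify both directly; your explicit case analysis on $|\varphi_A(e_{\lambda,\lambda'})|$ is exactly what is needed (and left implicit in the paper) to see that the nonzero generators of $Q_A^{int}$ all have value $\geq 1$.
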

  
  \begin{proof}
  Both $P_A^{int}$ and $Q_A^{int}$ are finitely generated and integral (see Remark \ref{rk:2}) and hence fine. It remains to show that the monoids are sharp and since $P_A^{int}\subseteq Q_A^{int}$ it is enough to show that $Q_A^{int}$ is sharp.
  But $Q_A^{int}$ and the homomorphism $q\mapsto |q|$ satisfies the condition of Lemma \ref{lem:pos-gen}, and hence $Q_A^{int}$ is sharp.
  \end{proof}
  
  \begin{lem}
  The monoids $P_A$ and $Q_A$ are quasi-fine and sharp.
  \end{lem}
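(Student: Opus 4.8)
The plan is to establish finite generation directly and then to obtain both sharpness and quasi-integrality from a single positivity argument, rather than trying to deduce them from the corresponding facts about $P_A^{int}$ and $Q_A^{int}$. That latter route is genuinely unavailable: as observed above $P_A$ need not be integral, so the integralization maps $P_A\to P_A^{int}$ and $Q_A\to Q_A^{int}$ may have nontrivial kernel, and neither sharpness nor quasi-integrality descends backwards through a non-injective homomorphism. Finite generation itself is immediate, since $A\times A$ is finite: $\NGG$ is finitely generated and hence so is its quotient $P_A$, while $Q_A$ is generated by $\gamma_A(P_A)$ together with the finitely many elements $\iota(\lambda)=(0,\lambda)$, using that $(p,\lambda)=\gamma_A(p)+\iota(\lambda)$ because $e_{0,\lambda}=0$. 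So it only remains to prove that both monoids are sharp and quasi-integral.

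First I would build value homomorphisms. Composing $\varphi_A\colon P_A\to \ZZ^A/\langle e_0\rangle$ with the value map $q\mapsto |q|$ gives a homomorphism $v_P\colon P_A\to \ZZ$, and composing the homomorphism $Q_A\to \ZZ^A/\langle e_0\rangle$ of Remark \ref{rk:2} with $|\cdot|$ gives $v_Q\colon Q_A\to \ZZ$. Both are well defined simply as composites of homomorphisms. I would then evaluate them on generators. On $P_A$, the generators $e_{\lambda,\lambda'}$ with $\lambda=0$ or $\lambda'=0$ are already zero, while for $\lambda,\lambda'\neq 0$ one computes $v_P(e_{\lambda,\lambda'})=|e_\lambda+e_{\lambda'}-e_{\lambda+\lambda'}|$, which equals $1$ when $\lambda+\lambda'\neq 0$ and $2$ when $\lambda+\lambda'=0$; in either case it is $\geq 1$. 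Thus $P_A$ is generated by elements of value $\geq 1$. On $Q_A$ the generators are the $\gamma_A(e_{\lambda,\lambda'})$, with $v_Q(\gamma_A(e_{\lambda,\lambda'}))=v_P(e_{\lambda,\lambda'})\geq 1$, and the $\iota(\lambda)$, with $v_Q(\iota(\lambda))=|e_\lambda|=1$ for $\lambda\neq 0$ (and $\iota(0)=0$); so $Q_A$ is likewise generated by elements of value $\geq 1$.

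With this in place, sharpness of both monoids is exactly Lemma \ref{lem:pos-gen}. For quasi-integrality I would push the same positivity argument one step further: if $p+q=p$ in $P_A$ (resp.\ $Q_A$), then applying $v_P$ (resp.\ $v_Q$) yields $v(q)=0$, and writing $q$ as a sum of the chosen generators, each of value $\geq 1$, the vanishing of $v(q)$ forces the empty sum, so $q=0$. Being finitely generated and quasi-integral, $P_A$ and $Q_A$ are then quasi-fine. The one step I would be most careful about is verifying that $v_P$ is strictly positive on \emph{every} nonzero generator of $P_A$ even though $P_A$ is not integral and $\varphi_A$ need not be injective; this is precisely why the computation must be carried out on the generators of $P_A$ itself rather than on $P_A^{int}=\varphi_A(P_A)$. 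Once that is checked, the strengthening of Lemma \ref{lem:pos-gen} from sharpness to quasi-integrality needs no new idea.
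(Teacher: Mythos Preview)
Your proof is correct and follows essentially the same route as the paper: build the value homomorphism $Q_A\to\ZZ$ via $(p,\lambda)\mapsto|\varphi_A(p)+e_\lambda|$, invoke Lemma~\ref{lem:pos-gen} for sharpness, and observe that the same positivity of the nonzero generators forces quasi-integrality (if $q+q'=q$ then $v(q')=0$, hence $q'=0$). The only cosmetic difference is that the paper deduces sharpness of $P_A$ from that of $Q_A$ via the inclusion $\gamma_A$, whereas you check it directly; your more explicit computation of $v$ on the generators and your remark that one cannot simply descend from $P_A^{int}$, $Q_A^{int}$ are both to the point.
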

  
  \begin{proof}
  The canonical morphism $Q_A\to Q_A^{int}$ sends $(p,\lambda)$ to $\varphi_A(p)+e_\lambda$. Hence $Q_A$ and the homomorphism $(p,q)\mapsto |\varphi_A(p)+e_\lambda|$ satisfies the condition of Lemma \ref{lem:pos-gen}, and hence $Q_A$ is sharp. But then $P_A$ is also sharp since it is a submonoid.
  Both $P_A$ and $Q_A$ are finitely generated by definition so it remains so prove that they are quasi-integral. This follows from the fact that they are generated by elements of strictly positive value. Indeed, $|\varphi_A(p)+e_\lambda|\geq 1$ unless $(p,\lambda)=0$. If $(p,\lambda)=(p,\lambda)+(p',\lambda')$ then $|\varphi_A(p')+e_\lambda'|=0$ and hence $(p',\lambda')=0$. This completes the proof.
  \end{proof}

We will now define $Q_A$ as a quotient of $P_A\oplus Q_A^+$. 

\begin{df}
For $q\in Q_A^+$, write $q=\sum_{i=1}^ne_{\lambda_i}$ where we may have $\lambda_i=\lambda_j$ for $i\neq j$. Define $h(q)\in P_A$ by $h(q)=0$ if $n\leq 1$, and
\[
h(q)=e_{\lambda_1,\lambda_2}+e_{\lambda_1+\lambda_2,\lambda_3}+\dots+e_{\lambda_1+\dots+\lambda_{n-1},\lambda_n}
\]
otherwise.
\end{df}

\begin{rk}\label{rk:Q-rel}
  Note that by definition of the equivalence relation $R$ in Definition \ref{df:congr-rel}, the element $h(q)$ is independent of the order of the $\lambda_i$'s in the representation $q=\sum_{i=1}^ne_{\lambda_i}$. This implies that we have a set-theoretic function
  \[\begin{split}h\colon Q_A^+ & \to P_A \\
    q & \mapsto h(q)\,,
    \end{split}
  \]
  which satisfies $q=\varphi_A(h(q))+e_{m(q)}$.
\end{rk}

\begin{rk}\label{rk:ass}
Note that $\sum_{i=1}^n(p_i,\lambda_i)=(\sum_{i}p_i+h(\sum_{i}e_{\lambda_i}),\sum_i\lambda_i)$ in $Q_A$.
\end{rk}

\begin{rk}\label{rk:f-happy-rel}
The function $h\colon Q_A^+\to P_A$ satisfies the relation $h(q+q')=h(q)+h(q')+e_{m(q),m(q')}$.
\end{rk}

\begin{df}
We define
\[\begin{split}
j\colon Q_A^+& \to Q_A \\
q & \mapsto (h(q),m(q))\,.
\end{split}
\]
\end{df}

\begin{rk}
The map $j$ is a homomorphism by Remark \ref{rk:f-happy-rel}. Furthermore, there is a morphism $Q_A\to \ZZ^A/\langle e_0\rangle$ sending $(p,\lambda)$ to $\varphi_A(p)+e_\lambda$ and by Remark \ref{rk:Q-rel}, the composition $Q_A^+\to Q_A\to \ZZ^A/\langle e_0\rangle$ is the canonical inclusion. Hence we conclude that $j$ is injective and from now on we view $Q_A^+$ as a submonoid of $Q_A$.
\end{rk}

\begin{df}\label{df:mon-Q}
Let $R_P$ be the congruence relation on $P_A\oplus Q_A^+$ generated by the relation $(e_{\lambda,\lambda'},e_{\lambda+\lambda'})\sim (0,e_{\lambda}+e_{\lambda'})$.
\end{df}

\begin{rk}\label{rk:good-gen-for-R_P}
By transitivity we have $(e_{\lambda,\lambda'}+e_{\lambda+\lambda',\lambda''},e_{\lambda+\lambda'+\lambda''})\sim (0,e_{\lambda}+e_{\lambda'}+e_{\lambda''})$
since
\[\begin{split}(e_{\lambda,\lambda'}+e_{\lambda+\lambda',\lambda''},e_{\lambda+\lambda'+\lambda''}) & \sim  (e_{\lambda,\lambda'},e_{\lambda+\lambda'}+e_{\lambda''})  \\
& \sim (0,e_{\lambda}+e_{\lambda'}+e_{\lambda''})\,.
\end{split}\]
Iterating this process we conclude that $R_P$ contains the relation $(0,q)\sim (h(q),e_{m(q)})$ for any $q\in Q_A^+$. Since $R_P$ is a congruence relation, it is symmetric and closed under addition. Hence we conclude that $R_P$ contains the relation $R'$ defined by $(p,q)\sim (p',q')$ if $m(q)=m(q')$ and $p+h(q)=p'+h(q')$. It is clear that $R'$ is an equivalence relation and by Remark \ref{rk:f-happy-rel} it follows that $R'$ is a congruence relation. Note that $R'$ contains the relation $(e_{\lambda,\lambda'},e_{\lambda+\lambda'})\sim (0,e_{\lambda}+e_{\lambda'})$ and hence $R_P=R'$.
\end{rk}

\begin{rk}\label{rk:iso-mon}
  We have a function
  \[\begin{split}\tau\colon P_A\oplus Q_A^+ & \to Q_A \\
    (p,q) & \mapsto (p+h(q),m(q))\,
    \end{split}
  \]
 and by Remark \ref{rk:f-happy-rel} we have
 \[\begin{split}
\tau(p,q)+\tau(p',q') & = (p+p'+h(q)+h(q')+e_{m(q),m(q')},m(q)+m(q')) \\
& = (p+p'+h(q+q'),m(q+q')) \\
& = \tau(p+p',q+q')
\end{split}
\]
and hence $\tau$ is a homomorphism of monoids. Furthermore, we have $\tau(p,q)=\tau(p',q')$ if and only if $m(q)=m(q')$ and $p+h(q)=p'+h  (q')$. This means that $\tau$ induces a morphism $(P_A\oplus Q_A^+)/R_P\to Q_A$.

We also have a function
 \[\begin{split}\eta\colon Q_A & \to P_A\oplus Q_A^+ \\
   (p,\lambda) & \mapsto (p,e_\lambda)
   \end{split}
 \]
 which becomes a homomorphism when we quotient by $R_P$. Furthermore, $\tau\circ \eta=\id_{Q_A}$ and we conclude that $(P_A\oplus Q_A^+)/R_P\to Q_A$ is an isomorphism.
\end{rk}

\subsection*{Flat Kummer homomorphisms}

Now consider the following situation. Suppose that we have a flat Kummer homomorphism (Definition \ref{df:Kummer} and \ref{df:int-mor}) of quasi-fine (Definition \ref{mon-defs}) and sharp monoids $\gamma\colon P\to Q$. Note that $A=Q/P$ is a finite abelian group.
For $q,q'\in Q$ we write $q\leq q'$ if there exists a $p\in P$ such that $q'=q+p$. The relation $\leq$ is a partial order since $Q$ and $P$ are quasi-integral and sharp.

\begin{prop}\label{prop:mon-minimal}
Let $P\hookrightarrow Q$ be a flat Kummer morphism of quasi-fine and sharp monoids. Put $A=Q/P$ and let $m\colon Q\to A$ be the quotient. The set $Q_\lambda=m^{-1}(\lambda)$ has a unique minimal element $\iota(\lambda)$. Moreover, $Q_\lambda=\iota(\lambda)+P$.
\end{prop}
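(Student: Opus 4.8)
The plan is to exhibit $\iota(\lambda)$ as the unique least element of $Q_\lambda$ for the partial order $\leq$, by proving that the poset $(Q_\lambda,\leq)$ is both \emph{down-directed} (any two elements admit a common lower bound lying in $Q_\lambda$) and \emph{well-founded} (satisfies the descending chain condition). Once both properties are in hand, a minimal element $\iota(\lambda)$ is automatically a least element: given any $q\in Q_\lambda$, a common lower bound $q'$ of $\iota(\lambda)$ and $q$ satisfies $q'\leq\iota(\lambda)$, whence $q'=\iota(\lambda)$ by minimality, so $\iota(\lambda)\leq q$. Uniqueness is then clear, and the final equality $Q_\lambda=\iota(\lambda)+P$ is immediate: the inclusion $\supseteq$ holds because $m$ kills $P$, so $m(\iota(\lambda)+p)=\lambda$ for every $p\in P$, and the inclusion $\subseteq$ holds because $\iota(\lambda)\leq q$ means $q=\iota(\lambda)+p$ for some $p\in P$.

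For well-foundedness I would first produce a homomorphism $v\colon Q\to\N$ with $v^{-1}(0)=\{0\}$. Writing $\pi\colon Q\to Q^{int}$ for the map to the integralization, quasi-integrality of $Q$ forces $\ker\pi=\ker(Q\to Q^{gp})=\{q:\exists\,e,\ q+e=e\}=\{0\}$. Moreover $Q^{int}$ is fine, and sharp: a unit $\bar u$ of $Q^{int}$ lifts to $u,w\in Q$ with $\pi(u+w)=0$, so $u+w=0$ and hence $u=w=0$ by sharpness of $Q$, giving $\bar u=0$. A fine sharp monoid admits a value homomorphism $\bar v\colon Q^{int}\to\N$ that is positive away from $0$ (its nonzero elements are non-torsion in $(Q^{int})^{gp}$, and sharpness yields an integral functional in the interior of the dual cone), and then $v=\bar v\circ\pi$ satisfies $v^{-1}(0)=\{0\}$. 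Any strict descent $q_1>q_2>\cdots$, say $q_i=q_{i+1}+p_i$ with $p_i\neq 0$, would give $v(q_i)=v(q_{i+1})+v(p_i)>v(q_{i+1})$, an impossible infinite strictly decreasing sequence of natural numbers; hence $\leq$ satisfies the descending chain condition, so every nonempty subset of $Q$ has a minimal element.

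Down-directedness is where I would use that $\gamma$ is integral (Definition \ref{df:int-mor}). If $q_1,q_2\in Q_\lambda$ then $m(q_1)=m(q_2)$, so by the definition of the quotient congruence there are $p_1,p_2\in P$ with $p_1+q_1=p_2+q_2$. Applying integrality to this identity produces $q'\in Q$ and $p_1',p_2'\in P$ with $q_1=p_1'+q'$, $q_2=p_2'+q'$ and $p_1+p_1'=p_2+p_2'$. Then $q'\leq q_1$ and $q'\leq q_2$, and $m(q')=m(q_1)=\lambda$ since $m$ kills $p_1'$; thus $q'$ is a common lower bound lying inside $Q_\lambda$, as required.

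I expect the main obstacle to be this down-directedness step: extracting a common lower bound genuinely requires the integrality half of the flatness hypothesis, and one must check that the element $q'$ produced lands back in the fibre $Q_\lambda$ rather than merely in $Q$. The well-foundedness is more routine, but the non-integral nature of quasi-fine monoids means the value function must be built through $Q^{int}$, with quasi-integrality used precisely to guarantee $v^{-1}(0)=\{0\}$, rather than assumed at the outset.
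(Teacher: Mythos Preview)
Your proposal is correct and follows essentially the same route as the paper: establish that each fibre $Q_\lambda$ is down-directed using the integrality clause of flatness, and combine this with a descending chain condition to obtain a least element. The only difference is in how the DCC is justified: the paper disposes of it in one line by invoking finite generation and ``positivity'' of the partial order, citing \cite[Corollary 1.2]{Evans-Tot-ord-mon}, whereas you build an explicit value homomorphism $v\colon Q\to\N$ via the integralization $Q^{int}$ (using quasi-integrality to ensure $\ker(Q\to Q^{int})=0$ and sharpness to ensure $Q^{int}$ is sharp, hence admits such a functional). Your version is more self-contained; the paper's is terser but relies on an external reference. Both are fine.
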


\begin{proof}
If $m(q_1)=m(q_2)$, then there exists $p_1$ and $p_2$ such that $q_1+p_1=q_2+p_2$ and since $P\to Q$ is flat there exists $q'\in Q$ and $p_1',p_2'\in P$ such that $q_1=q'+p_1'$, $q_2=q'+p_2'$, and $p_1+p_1'=p_2+p_2'$. Hence $q'\leq q_1$ and $q'\leq q_2$.
Since $Q$ is finitely generated and $m^{-1}(\lambda)$ is partially ordered and positive, we get that $m^{-1}(\lambda)$ has a unique minimal element \cite[Corollary 1.2]{Evans-Tot-ord-mon}.
\end{proof}

%
\begin{cor}\label{cor:un-sec}
Let $P\hookrightarrow Q$ be a flat Kummer morphism of quasi-fine and sharp monoids and let $\lambda\in A=Q/P$ with $m\colon Q\to A$ the quotient. There is a set-theoretic section $\iota\colon A\to Q$ sending $\lambda$ to the unique minimal element $\iota(\lambda)$ in $m^{-1}(\lambda)$. Furthermore, $0\to P\to Q\to A\to 0$ is a free extension of $A$ by $P$ with basis $A$.
\end{cor}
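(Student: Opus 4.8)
The plan is to read off the section $\iota$ directly from Proposition~\ref{prop:mon-minimal} and then verify, one by one, the requirements in the definition of a free extension (Definition~\ref{df:ext}): exactness of $0\to P\xrightarrow{\gamma} Q\xrightarrow{m} A\to 0$, the normalization $\iota(0)=0$, and the bijectivity of $\varphi\colon P\times A\to Q$, $(p,\lambda)\mapsto \gamma(p)+\iota(\lambda)$. First I would \emph{define} $\iota(\lambda)$ to be the unique minimal element of $Q_\lambda=m^{-1}(\lambda)$, whose existence is exactly the content of Proposition~\ref{prop:mon-minimal}. Exactness is then formal: $\gamma$ is injective because a Kummer morphism is (Definition~\ref{df:Kummer}), $m$ is surjective as a quotient map, and $m^{-1}(0)=\gamma(P)$ by the very definition of $A=Q/P$. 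For $\iota(0)=0$ I would observe that $Q_0=m^{-1}(0)=\gamma(P)$ and that $0=\gamma(0)\leq \gamma(p)$ for every $p\in P$ (since $\gamma(p)=0+\gamma(p)$), so $0$ is the least element of $Q_0$ and hence $\iota(0)=0$.

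Surjectivity of $\varphi$ is immediate from Proposition~\ref{prop:mon-minimal}: given $q\in Q$, set $\lambda=m(q)$; since $Q_\lambda=\iota(\lambda)+P$, there is $p\in P$ with $q=\iota(\lambda)+\gamma(p)=\varphi(p,\lambda)$.

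The injectivity is where the real content lies, and I expect it to be the main obstacle, precisely because $Q$ is only assumed \emph{quasi-integral}, not integral, so one cannot simply cancel $\iota(\lambda)$. Suppose $\gamma(p_1)+\iota(\lambda_1)=\gamma(p_2)+\iota(\lambda_2)$. Applying $m$ and using $m\gamma=0$ together with $m\iota=\id$ yields $\lambda_1=\lambda_2=:\lambda$, so the relation becomes $\gamma(p_1)+\iota(\lambda)=\gamma(p_2)+\iota(\lambda)$. To extract $p_1=p_2$ I would invoke \emph{flatness} of $\gamma$ (Definition~\ref{df:int-mor}) with $q=\iota(\lambda)$: there exist $q^\ast\in Q$ and $p^\ast\in P$ such that $\iota(\lambda)=\gamma(p^\ast)+q^\ast$ and $p_1+p^\ast=p_2+p^\ast$. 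Applying $m$ to the first relation gives $m(q^\ast)=\lambda$, so $q^\ast\in Q_\lambda$, while $\iota(\lambda)=\gamma(p^\ast)+q^\ast$ shows $q^\ast\leq \iota(\lambda)$. Since $\iota(\lambda)$ is the least element of $Q_\lambda$ and $\leq$ is a partial order, antisymmetry forces $q^\ast=\iota(\lambda)$. Then $\gamma(p^\ast)+\iota(\lambda)=\iota(\lambda)$, so quasi-integrality of $Q$ gives $\gamma(p^\ast)=0$, whence $p^\ast=0$ by injectivity of $\gamma$. Substituting into $p_1+p^\ast=p_2+p^\ast$ yields $p_1=p_2$. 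This shows $\varphi$ is bijective, i.e.\ $Q$ is free over $P$ with basis $\iota(A)$, and hence $0\to P\to Q\to A\to 0$ is a free extension of $A$ by $P$ as claimed.
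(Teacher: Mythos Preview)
Your proof is correct and follows essentially the same route as the paper: both reduce to the injectivity of $\varphi$ (the paper dispatches exactness, $\iota(0)=0$, and surjectivity as immediate from Proposition~\ref{prop:mon-minimal}), and the injectivity argument---applying flatness with $q=\iota(\lambda)$ to produce $q^\ast\leq \iota(\lambda)$ in $Q_\lambda$, forcing $q^\ast=\iota(\lambda)$ by minimality, then using quasi-integrality to kill $p^\ast$---is the same in both. The only difference is that you spell out the preliminary verifications more explicitly.
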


\begin{proof}
By Proposition \ref{prop:mon-minimal} we need only show that the map $P\times A\to Q$ sending $(p,\lambda)$ to $p+\iota(\lambda)$ is injective. Suppose that $p_1+\iota(\lambda)=p_2+\iota(\lambda)$. Then the flatness hypothesis says that there exists a $q\in Q$ and $p'\in P$ such that $q+p'=\iota(\lambda)$ and $p'+p_1=p'+p_2$.
But $q\in m^{-1}(\lambda)$ and $q\leq \iota(\lambda)$ implies that $q=\iota(\lambda)$ by minimality and unicity of $\iota(\lambda)$.
Hence $\iota(\lambda)+p'=\iota(\lambda)$ so $p'=0$ since $Q$ is quasi-integral. Hence $p_1=p_2$. This proves the claim.
\end{proof}

\begin{prop}\label{prop:flat-can-diag}
Let $\gamma\colon P\to Q$ be a flat Kummer homomorphism of quasi-fine and sharp monoids $P$ and $Q$, and let $A=Q/P$. Then there exists canonical morphisms $Q_A\to Q$ and $P_A\to P$ such that the following diagram commutes and the upper square is a pushout:
\[\begin{tikzcd}P_A\ar{r}\ar{d} & P\ar{d} \\ Q_A\ar{r}\ar{d} & Q\ar{d} \\ A\ar[equal]{r} & A\,.\end{tikzcd}\]
\end{prop}

\begin{proof}
This follows from Proposition \ref{prop:ext-coc}, Remark \ref{rk:uni-coc}, and Corollary \ref{cor:un-sec}. We leave the proof to the reader. 
\end{proof}

\begin{rk}
Proposition \ref{prop:flat-can-diag} in particular implies that, whenever we have a Deligne--Faltings datum $(P,Q,\Li)$ with $P$ and $Q$ (constant) quasi-fine and sharp, and with $P\to Q$ a flat Kummer homomorphism, we have a canonical diagram
\[
\begin{tikzcd}
  P_A\ar{r}\ar{d} & P\ar{d} \\
  Q_A \ar{r} & Q
\end{tikzcd}
\]
inducing an isomorphism of root stacks $S_{Q_A/P_A}\simeq S_{Q/P}$. This also works when we have sheaves of monoids instead of constant monoids (see Definition \ref{df:monoids-gen}).
\end{rk}

\section{2-cocycles in symmetric monoidal categories}\label{sec:2-coc-in-sym-mon}
The theory of 2-cocycles and extensions can be generalized to the setting of categories fibered in symmetric monoidal groupoids. Throughout the section we ignore the base category, which will in all applications be the category of schemes over our base scheme $S$. A category fibered in symmetric monoidal groupoids which is a stack will be referred to as a \emph{symmetric monoidal stack}.

\begin{df}
  A \emph{1-cochain} of $\A$ with values in $\M$ is a morphism of stacks $\zeta\colon \A\to \M$ together with an isomorphism $\zeta(0)\simeq \mathbbm{1}_\M$. 

\end{df}

\begin{df}\label{df:weak-2-cocycle}
  Let $\A$ be a sheaf of abelian groups and let $\M$ be a symmetric monoidal stack. A \emph{(commutative) 2-cocycle of $\A$ with values in $\M$} consists of
  \begin{enumerate}
    \item a morphism $f\colon \A\times \A\to \M$ of (ordinary) fibered categories\,,
    \item natural isomorphisms:
    \begin{enumerate}
      \item[(i)] $\uuu_{\lambda}\colon f(0,\lambda)\cong \mathbbm{1}\,,$
      \item[(ii)] $\sss_{\lambda,\lambda'}\colon f(\lambda,\lambda')\cong f(\lambda',\lambda)\,,$
      \item[(iii)] $\aaa_{\lambda,\lambda',\lambda''}\colon f(\lambda',\lambda'')\otimes f(\lambda,\lambda'+\lambda'')\cong f(\lambda,\lambda')\otimes f(\lambda+\lambda',\lambda'')$
    \end{enumerate}
    for all $\lambda,\lambda',\lambda''\in \A$.
  \end{enumerate}
  Furthermore, this data is required to satisfy the following commutativities (analogous to a symmetric monoidal category):
  \begin{enumerate}
    \item $\uuu_0\circ \sss_{0,0}=\uuu_0$\,,
    \item $\sss_{\lambda,\lambda'}\circ\sss_{\lambda',\lambda}=\id$\,,
    \item
      \[\begin{tikzcd}
        f(0,\lambda'')\otimes f(\lambda,0+\lambda'')\ar{d}[swap]{\uuu\otimes \id}\ar{r}{\aaa} & f(\lambda,0)\otimes f(\lambda+0,\lambda'')\ar{d}{(\uuu\circ\sss)\otimes \id} \\
        f(\lambda,\lambda'')\ar[equal]{r} & f(\lambda,\lambda'')\,,
      \end{tikzcd}\]
    \item (Pentagon axiom)
      \[\begin{tikzcd}
        (f(\lambda'',\lambda''')\otimes f(\lambda',\lambda''+\lambda'''))\otimes f(\lambda,\lambda'+\lambda''+\lambda''')\ar{d}[swap]{(\ggamma\otimes\id)\circ \aalpha\circ(\id\otimes\aaa)\circ\aalpha^{-1}}\ar{r}{\aaa\otimes \id} &
        (f(\lambda',\lambda'')\otimes f(\lambda'+\lambda'',\lambda'''))\otimes f(\lambda,\lambda'+\lambda''+\lambda''')\ar{dd}{\aalpha\circ(\id\otimes \aaa)\circ \aalpha^{-1}} \\
        (f(\lambda,\lambda')\otimes f(\lambda'',\lambda'''))\otimes f(\lambda+\lambda',\lambda''+\lambda''')\ar{d}[swap]{\aalpha\circ(\id\otimes \aaa)\circ\aalpha^{-1}} & \\
        (f(\lambda,\lambda')\otimes f(\lambda+\lambda',\lambda''))\otimes f(\lambda+\lambda'+\lambda'',\lambda''')\ar{r}{\aaa^{-1}\otimes \id} & (f(\lambda',\lambda'')\otimes f(\lambda,\lambda'+\lambda''))\otimes f(\lambda+\lambda'+\lambda'',\lambda''')\,,
      \end{tikzcd}\]
    \item (Hexagon axiom)
      \[\begin{tikzcd}
        f(\lambda',\lambda'')\otimes f(\lambda,\lambda'+\lambda'')\ar{r}{\aaa}\ar{d}[swap]{\id\otimes \sss} &
        f(\lambda,\lambda')\otimes f(\lambda+\lambda',\lambda'')\ar{r}{\sss\otimes \id} &
        f(\lambda',\lambda)\otimes f(\lambda+\lambda',\lambda'')\ar{d}{\aaa^{-1}} \\
        f(\lambda',\lambda'')\otimes f(\lambda'+\lambda'',\lambda)\ar{r}[swap]{\aaa^{-1}} &
        f(\lambda'',\lambda)\otimes f(\lambda',\lambda+\lambda'')\ar{r}[swap]{\sss\otimes \id} &
        f(\lambda,\lambda'')\otimes f(\lambda',\lambda+\lambda'')\,.
      \end{tikzcd}\]
  \end{enumerate}
 
  A 2-\emph{coboundary of 2-cocycles} $f'\to f$ is a pair $(\zeta, \tau)$ consisting of a 1-cochain $\zeta\colon \A\to \M$ and a natural isomorphism 
    \[
      \tau\colon f'\otimes (\otimes\circ(\zeta\times\zeta)) \simeq f\otimes(\zeta\circ \Sigma)\,,
    \] 
  where $\Sigma\colon \A\times \A\to \A$ is the addition. This means that we have an isomorphism, natural in $\lambda,\lambda'$, 
    \[
      \tau_{\lambda,\lambda'}\colon f'(\lambda,\lambda')\otimes \zeta(\lambda)\otimes\zeta(\lambda')\to f(\lambda,\lambda')\otimes \zeta(\lambda+\lambda')\  
    \]  
  for all local sections $\lambda,\lambda'$ of $\A$\,.
  Furthermore, $\tau$ is required to commute with the morphisms $\uuu,\sss$, and $\aaa$.
  A 2-\emph{morphism of 2-coboundaries} $(\zeta', \tau')\to (\zeta, \tau)$ is a natural isomorphism $\theta\colon \zeta'\to \zeta$ compatible with $\tau'$ and $\tau$ in the obvious sense. 

  We denote by $\CAM$ the 1-category with 2-cocycles as objects and natural isomorphisms $f'\to f$, which commutes with the morphisms $\uuu,\sss$, and $\aaa$, as morphisms. 
  
  We denote by $\HAM$ the 2-category with 
    \begin{itemize}
      \item objects -- 2-cocycles $f\colon \A\times \A\to \M$\,,
      \item 1-morphisms --  2-coboundaries of 2-cocycles $(\zeta, \tau)\colon f'\to f$\,,
      \item 2-morphisms -- 2-morphisms of 2-coboundaries $\theta\colon (\zeta', \tau')\to (\zeta, \tau)$\,.
    \end{itemize}
\end{df}

Given a 2-cocycle $f\colon \A\times \A\to \M$, we define a symmetric monoidal stack $\M\times_f\A$ consisting of the data
  \[(\M\times \A,\otimes, \mathbbm{1}_{\M\times \A},\aalpha,\llambda,\rrho,\ggamma)\,.\]
The underlying category is $\M\times \A$, the tensor product
  \[\begin{split}
    \otimes\colon (\M\times\A) & \times (\M\times\A) \to \M\times\A \\
    ((x,\lambda) & ,(y,\lambda')) \mapsto ((x\otimes y)\otimes f(\lambda,\lambda'), \lambda+ \lambda')\,,
  \end{split}\]
and the unit is
  \[
    \mathbbm{1}_{\M\times \A}=(\mathbbm{1}_\M,0)\,.
  \]
The natural isomorphisms are defined by
  \[\begin{split}
    \aalpha\colon (x,\lambda)\otimes ((y,\lambda')\otimes (z,\lambda'')) & =((x\otimes ((y\otimes z)\otimes f(\lambda',\lambda'')))\otimes f(\lambda,\lambda'+ \lambda''),\lambda+ (\lambda'+\lambda'')) \\
    & \cong ((x\otimes(y\otimes z))\otimes (f(\lambda',\lambda'')\otimes f(\lambda,\lambda'+ \lambda'')),\lambda+ (\lambda'+\lambda''))\\
    & \cong (((x\otimes y)\otimes z))\otimes (f(\lambda,\lambda')\otimes f(\lambda+ \lambda',\lambda'')),(\lambda+ \lambda')+\lambda'') \\        & \cong ((((x\otimes y)\otimes f(\lambda,\lambda'))\otimes z)\otimes f(\lambda+\lambda',\lambda''),(\lambda+\lambda')+\lambda'')\\
    & = ((x,\lambda)\otimes (y,\lambda'))\otimes (z,\lambda'')\,,
  \end{split}\]
  \[\begin{split}
    \llambda\colon (\mathbbm{1}_\M,0)\otimes (x,\lambda) & = ((\mathbbm{1}_\M\otimes x)\otimes f(0,\lambda),0+ \lambda) \\
    & \cong (x,\lambda)\,,
  \end{split}\]
  \[\begin{split}
    \rrho\colon (x,\lambda)\otimes (\mathbbm{1}_\M,0) & = ((x \otimes\mathbbm{1}_\M)\otimes f(\lambda ,0), \lambda + 0) \\
    & \cong (x,\lambda)\,,
  \end{split}\]
and
  \[\begin{split}
    \ggamma\colon (x,\lambda)\otimes (y,\lambda') & = ((x\otimes y)\otimes f(\lambda,\lambda'),\lambda+ \lambda')\\
    & \cong ((y\otimes x)\otimes f(\lambda',\lambda), \lambda'+ \lambda) \\
    & = (y,\lambda')\otimes (x,\lambda) \\
  \end{split}\]
for all $x,y,z\in \M$ and all $\lambda,\lambda',\lambda''\in \A$.

\begin{prop}
  The data
  \[(\M\times \A,\otimes, \mathbbm{1}_{\M\times \A},\aalpha,\llambda,\rrho,\ggamma)\]
  defines a symmetric monoidal stack $\M\times_f\A$.
\end{prop}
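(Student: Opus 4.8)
The plan is to verify the axioms of a symmetric monoidal category for the data $(\M\times\A, \otimes, \mathbbm{1}_{\M\times\A}, \aalpha, \llambda, \rrho, \ggamma)$ fibrewise, and to dispose of the stack condition separately. The latter is immediate: $\A$ is a sheaf, hence a stack; $\M$ is a symmetric monoidal stack by hypothesis; and the product of two stacks over the base is again a stack, so the underlying fibered category $\M\times\A$ is a stack. Thus all the content lies in the monoidal coherence.

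First I would check that $\otimes$ is a bifunctor and that $\aalpha$, $\llambda$, $\rrho$, and $\ggamma$ are natural isomorphisms. Each of these structure morphisms is defined as a composite of the corresponding structure isomorphism of $\M$ (its associator, unitors, and braiding) with the cocycle isomorphisms $\aaa$, $\uuu$, and $\sss$ from Definition~\ref{df:weak-2-cocycle}. Since $f$ is a morphism of fibered categories and the structure data of $\M$ are natural, each composite is an isomorphism natural in all of its arguments; this is a direct but routine check.

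The core of the proof is the coherence conditions, which I would reduce to the corresponding coherences for $\M$ together with the commutativities imposed on $f$ in Definition~\ref{df:weak-2-cocycle}. The key observation is that every coherence diagram for $\M\times\A$ splits as a pair: an $\A$-component and an $\M$-component. The $\A$-component records only the addition in $\A$, which is associative, commutative, and unital, so it commutes automatically. The $\M$-component is where the work happens. For the pentagon, after cancelling the associators and braidings supplied by $\M$ (which commute among themselves by the pentagon and naturality axioms of $\M$), the residual diagram involving the factors $f(\lambda,\lambda')$, $f(\lambda',\lambda'')$, and $f(\lambda,\lambda'+\lambda'')$ is exactly the cocycle pentagon (the fourth commutativity of $f$). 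Likewise, the triangle axiom reduces to the triangle axiom of $\M$ together with the unit conditions on $\uuu$ (the first and third commutativities); the two hexagon axioms reduce to the hexagons of $\M$ together with the cocycle hexagon (the fifth commutativity); and the symmetry $\ggamma\circ\ggamma=\id$ reduces to symmetry of the braiding of $\M$ together with the second commutativity $\sss_{\lambda,\lambda'}\circ\sss_{\lambda',\lambda}=\id$.

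I expect the main obstacle to be the pentagon axiom, which involves the largest diagram and the most bookkeeping. The difficulty is purely organizational: one must carefully track how the associator and braiding of $\M$ are threaded through the fivefold tensor product so that the $\M$-contributions collapse via $\M$'s own pentagon, leaving the cocycle pentagon as the only surviving obstruction. Isolating the $\A$-grading at the outset---so that it may be discarded as trivially associative---and expanding $\aalpha$ as the prescribed four-step composite make this manageable, reducing the verification to matching the two sides of the cocycle pentagon term by term.
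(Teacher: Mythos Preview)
Your proposal is correct and in fact supplies considerably more detail than the paper, whose proof reads in its entirety ``This is tedious but straight forward and left to the reader.'' Your outline---verifying the stack condition as a product of stacks, then reducing each coherence diagram for $\M\times_f\A$ to the corresponding coherence of $\M$ plus the matching commutativity from Definition~\ref{df:weak-2-cocycle}---is exactly the verification the paper declines to write out, so there is nothing to compare.
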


\begin{proof}
This is tedious but straight forward and left to the reader.
\end{proof}

\begin{df}\label{df:ext-cat}
  Let $\A$ be a sheaf of abelian groups and let $\M$ be a symmetric monoidal stack. A \emph{free extension of $\A$ by $\M$} is a short exact sequence $E$ of symmetric monoidal stacks
    \[0\to \M\xrightarrow{\gamma} \C\xrightarrow{m} \A\to 0\]
  together with a section $\iota\colon \A\to \C$ of $m$ such that the induced morphism $\gamma\otimes \iota\colon \M\times \A\to \C$ is an equivalence of fibered categories.

  A \emph{1-morphism} of free extensions $E\to E'$ of $\A$ by $\M$ is a triple $(\varphi,\alpha,\beta)$ consisting of a morphism of symmetric monoidal stacks $\varphi\colon \C\to \C'$ such that $m'\circ\varphi=m$, and natural isomorphisms $\alpha\colon \varphi\circ \gamma\simeq \gamma'$, and $\beta\colon \varphi\circ \iota \simeq \iota'$. 
  A \emph{2-morphism} $(\varphi,\alpha,\beta)\to (\varphi',\alpha',\beta')$ is a natural isomorphism $\varphi\simeq \varphi'$ which commutes with $\alpha,\beta,\alpha', \beta'$ in the obvious sense. 
  We denote the 2-category of free extensions by $\ExtAM$.
\end{df}

We now state the equivalent of Proposition \ref{prop:ext-coc}, which shows that $\ExtAM$ is naturally equivalent to a 1-category. 

\begin{prop}\label{prop:equiv-coc-ext}
  We have an equivalence of groupoids
    \[\ExtAM\simeq \CAM\,.\]
\end{prop}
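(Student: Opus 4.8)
The plan is to construct quasi-inverse functors $\Psi\colon \ExtAM\to\CAM$ and $\Theta\colon\CAM\to\ExtAM$ by categorifying the constructions in the proof of Proposition \ref{prop:ext-coc}, replacing equalities of elements by the coherence isomorphisms $\uuu,\sss,\aaa$. The natural candidate for $\Theta$ is the assignment $f\mapsto(\M\times_f\A)$, the symmetric monoidal stack constructed just before the statement; the section $\iota\colon\A\to\M\times_f\A$ is $\lambda\mapsto(\mathbbm{1}_\M,\lambda)$ and $\gamma\colon\M\to\M\times_f\A$ is $x\mapsto(x,0)$, and one checks $\gamma\otimes\iota$ is an equivalence essentially by inspection since the underlying category is literally $\M\times\A$. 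The main point is that the coherence data $(\aalpha,\llambda,\rrho,\ggamma)$ already verified in the preceding proposition is built precisely so that the associativity, unit, and symmetry constraints of $\M\times_f\A$ encode the cocycle axioms (iii), (i), (ii) together with their compatibilities (the pentagon and hexagon of the 2-cocycle correspond exactly to the pentagon and hexagon of $\M\times_f\A$).

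For the functor $\Psi$ in the other direction, I would take a free extension $0\to\M\xrightarrow{\gamma}\C\xrightarrow{m}\A\to0$ with section $\iota$, and define $f_E\colon\A\times\A\to\M$ by transporting the tensor product of $\C$ through the equivalence $\gamma\otimes\iota$. Concretely, for local sections $\lambda,\lambda'$ the object $\iota(\lambda)\otimes\iota(\lambda')$ lies in $\C$ and maps under $m$ to $\lambda+\lambda'$, so by the defining equivalence it is isomorphic to $\gamma(f_E(\lambda,\lambda'))\otimes\iota(\lambda+\lambda')$ for a well-defined object $f_E(\lambda,\lambda')\in\M$; this is the analogue of composing the inclusion $A\times A\to Q\times Q$ with addition and the projection $r\colon Q\to P$. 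The isomorphisms $\uuu,\sss,\aaa$ on $f_E$ are then induced by the unit, symmetry, and associativity constraints of $\C$ restricted to the image of $\iota$, and the five coherence conditions of Definition \ref{df:weak-2-cocycle} follow from the corresponding coherence axioms of the symmetric monoidal stack $\C$. A morphism of extensions $(\varphi,\alpha,\beta)$ induces a natural isomorphism $f_E\to f_{E'}$ commuting with $\uuu,\sss,\aaa$ via $\alpha$ and $\beta$, so $\Psi$ is a functor to the $1$-category $\CAM$; I would remark that although $\ExtAM$ is a priori a $2$-category, any $2$-morphism is sent to an identity, which is what makes the target a $1$-category and justifies the phrase that $\ExtAM$ is equivalent to a $1$-category.

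Finally I would exhibit the natural equivalences $\Theta\circ\Psi\simeq\id_{\ExtAM}$ and $\Psi\circ\Theta\simeq\id_{\CAM}$. For the latter, starting from a $2$-cocycle $f$, the extension $\M\times_f\A$ has $\iota(\lambda)=(\mathbbm{1}_\M,\lambda)$, and $\iota(\lambda)\otimes\iota(\lambda')=((\mathbbm{1}_\M\otimes\mathbbm{1}_\M)\otimes f(\lambda,\lambda'),\lambda+\lambda')\cong(f(\lambda,\lambda'),\lambda+\lambda')=\gamma(f(\lambda,\lambda'))\otimes\iota(\lambda+\lambda')$, so $f_{\M\times_f\A}\cong f$ canonically and compatibly with all three structure isomorphisms. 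For the former, given $E$ the equivalence $\gamma\otimes\iota\colon\M\times_{f_E}\A\to\C$ is promoted to a $1$-morphism of extensions, with the isomorphisms $\alpha,\beta$ supplied by the unit constraints of $\C$. \textbf{The main obstacle} I expect is purely bookkeeping of coherence: verifying that the induced isomorphisms $\uuu,\sss,\aaa$ on $f_E$ satisfy all five conditions, and that the comparison $2$-morphisms are themselves compatible with $\uuu,\sss,\aaa,\tau$, is a long diagram chase. Since the delicate cocycle identities are already packaged into the verified symmetric monoidal structure on $\M\times_f\A$, I would state these coherence verifications follow by unwinding the constraints and, as in the proof of Proposition \ref{prop:ext-coc}, leave the routine checks to the reader.
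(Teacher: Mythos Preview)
Your proposal is correct and follows essentially the same approach as the paper: the paper constructs the functor $\psi\colon\CAM\to\ExtAM$ via $f\mapsto\M\times_f\A$ (your $\Theta$), then shows it is essentially surjective by choosing a quasi-inverse retraction $\rho\colon\C\to\M$ and defining the 2-cocycle as the composite $\A\times\A\xrightarrow{\iota\times\iota}\C\times\C\xrightarrow{\otimes}\C\xrightarrow{\rho}\M$ (your $\Psi$), leaving fully faithfulness to the reader. The only organizational difference is that the paper checks one functor is an equivalence rather than explicitly exhibiting the pair of quasi-inverses, but the underlying constructions and the coherence bookkeeping you outline are the same.
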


\begin{proof}
  This is of course very similar to the proof of Proposition \ref{prop:ext-coc}. 
  
  Given a 2-cocycle $f\colon \A\times \A\to \M$ we get an associated symmetric monoidal stack $\M\times_f\A$ as defined above. Given a natural isomorphism of 2-cocycles $\tau\colon f'\to f$, we define $\varphi\colon \M\times_{f'}\A\to \M\times_{f}\A$ to be the identity on objects and with comparison isomorphism
    \[
      \varphi((x,\lambda)\otimes(y,\lambda'))=(x\otimes y\otimes f'(\lambda,\lambda'),\lambda+\lambda')\xrightarrow{(\id\otimes \tau,\id)} (x\otimes y\otimes f(\lambda,\lambda'),\lambda+\lambda')\,.
    \]
  This defines a functor $\psi\colon \CAM \to \ExtAM$. To see that $\psi$ is essentially surjective, consider an extension 
    \[0\to \M\xrightarrow{\gamma} \C\xrightarrow{m} \A\to 0\,.\]
  Choose a quasi-inverse $\C\to \M\times \A$ to obtain a retraction $\rho\colon \C\to \M$. Then the composition 
    \[
      f\colon \A\times \A\xrightarrow{\iota\times\iota}\C\times \C\xrightarrow{\otimes}\C\xrightarrow{\rho}\M\  
    \]
  may be given the structure of a 2-cocycle using the symmetric monoidal structure of $\C$ and we get an isomorphism of extensions $\C\to \M\times_f\A$. 
  
  We leave to the reader to check that $\psi$ is fully faithful. 
\end{proof}

\subsection*{The universal extension and the universal 2-cocycle}
We need a sheafified version of the monoid $P_A$.
\begin{df}\label{df:monoids-gen}
Let $\A$ be an \'etale sheaf of abelian groups of finite type on a scheme $S$. Define $\N^{\A^2}$ and $P_\A$ to be the sheafification of
\[\begin{split}
  U & \mapsto \N^{\A(U)} \,, \mbox{ and }\\
  U &\mapsto P_{\A(U)} \,,
\end{split}\]
respectively (see Definition \ref{df:P}).
Furthermore, we define
  \[
    Q_\A = P_\A\times_{e_{(-,-)}}\A\,.
  \]
\end{df}

We also define groupoid versions of the monoids $P_\A$ and $Q_\A$ (Definition \ref{df:congr-rel}, Definition \ref{df:P}, Definition \ref{df:Q}).

\begin{df}\label{df:groupoid-Q}
  We define $\PA=[R_\A\rightrightarrows \N^{\A^2}]$
  and $\QA = \PA\times_{e_{(-,-)}}\A$.
\end{df}

As in the case of monoids, there is a universal 2-cocycle
  \[
    e_{(-,-)}\colon \A\times \A \to \PA
  \]
and a universal extension
  \[
    \PA\to \QA\to \A\,.
  \]

\begin{lem}\label{lem:quot-equiv}
  The canonical maps $\PA\to P_\A$ and $\QA\to Q_\A$ are equivalences of stacks.
\end{lem}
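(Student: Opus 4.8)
The plan is to exploit the fact that a \emph{congruence relation} is in particular an \emph{equivalence relation}, and that the quotient stack of an equivalence relation in the étale topos collapses to the quotient sheaf; the twisted-product structure will then carry the first equivalence over to the second.

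First I would treat $\PA$. By Definition \ref{df:congr-rel} the relation $R_A$ is a congruence relation on $\N^{\GG}$, hence reflexive, symmetric, and transitive. Being an equivalence relation is cut out by finite-limit conditions, which are preserved by sheafification, so the sheafified relation $R_\A\hookrightarrow \N^{\A^2}\times\N^{\A^2}$ is a monomorphism exhibiting $R_\A$ as an equivalence relation on the sheaf of monoids $\N^{\A^2}$. Consequently the groupoid object $[R_\A\rightrightarrows \N^{\A^2}]$ is \emph{rigid}: over any $U$ there is at most one morphism between two objects, and all automorphism groups are trivial.

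Next I would use that the étale topos is exact, so every equivalence relation is effective. This forces the quotient stack $\PA=[R_\A\rightrightarrows \N^{\A^2}]$ to be equivalent to the quotient sheaf $\N^{\A^2}/R_\A$, since the quotient prestack $U\mapsto [R_\A(U)\rightrightarrows \N^{\A^2}(U)]$ takes values in setoids and its stackification is therefore a sheaf, namely the sheaf coequalizer. On the other hand, $P_\A$ is by Definition \ref{df:monoids-gen} the sheafification of $U\mapsto P_{\A(U)}=\N^{\A(U)^2}/R_{\A(U)}$. Because sheafification is a left adjoint it commutes with coequalizers, so this sheafification agrees with the sheaf coequalizer $\N^{\A^2}/R_\A$. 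Comparing the two identifications gives the canonical equivalence $\PA\simeq P_\A$, and by construction it is a map of sheaves of monoids carrying the universal $2$-cocycle $e_{(-,-)}\colon \A\times\A\to \PA$ to $e_{(-,-)}\colon \A\times\A\to P_\A$.

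Finally, for $\QA$ I would transport the equivalence through the twisted-product construction. Both $\QA=\PA\times_{e_{(-,-)}}\A$ (Definition \ref{df:groupoid-Q}) and $Q_\A=P_\A\times_{e_{(-,-)}}\A$ (Definition \ref{df:monoids-gen}) have underlying stack, respectively sheaf, equal to $\PA\times\A$, respectively $P_\A\times\A$, with the monoidal structure twisted by the \emph{same} universal cocycle. Since the equivalence $\PA\simeq P_\A$ is compatible with $e_{(-,-)}$, it induces an isomorphism of the twisted monoidal structures and hence an equivalence $\QA\simeq Q_\A$. The one step deserving care is establishing that the quotient \emph{stack} of $[R_\A\rightrightarrows \N^{\A^2}]$ reduces to the quotient \emph{sheaf}; this rests entirely on $R_\A$ being an equivalence relation rather than a general groupoid, and once this rigidity is in place the remaining identifications are purely formal.
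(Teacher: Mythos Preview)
Your argument is correct and follows essentially the same route as the paper's proof: both hinge on the observation that $R_\A$ is an equivalence relation, so the prestack $\{R_\A\rightrightarrows \N^{\A^2}\}$ is setoid-valued (rigid), whence its stackification is just the sheafification of the underlying presheaf of isomorphism classes, namely $P_\A$; the second equivalence is then deduced formally from the twisted-product definition of $\QA$ and $Q_\A$. Your write-up is somewhat more explicit than the paper's in justifying why the sheaf coequalizer coincides with $P_\A$ (via sheafification commuting with coequalizers), but there is no genuine difference in strategy.
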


\begin{proof}
We have that $\PA=[R_\A\rightrightarrows \N^{\A^2}]$ is the stackification of the fibered category $\{R_\A\rightrightarrows \N^{\A^2}\}$ whose fiber over a scheme $U$ is the groupoid $\{R_\A(U)\rightrightarrows \N^{\A^2}(U)\}$. But this groupoid has at most one arrow between any two objects and hence $\{R_\A\rightrightarrows \N^{\A^2}\}$ is equivalent to a category fibered in sets, i.e., an ordinary presheaf. Applying the stackification functor to a presheaf gives the associated sheaf and hence we conclude that
$\PA$ is equivalent to a sheaf. The canonical functor $\PA\to P_\A$ just identifies isomorphic objects and is hence an equivalence. This also shows that $\QA\to Q_\A$ is an equivalence.
\end{proof}

  We will now compare 2-cocycles $f\colon \A\times \A\to \M$ and symmetric monoidal functors $\Li\colon P_\A\to \M$. We assume that our base category is $(\Sch/S)$ and that $\A$ is a sheaf of abelian groups on the small \'etale site of $S$. Passing to the espace \'etal\'e we think of $\A$ as an \'etale algebraic space over $S$.

  Let $\C$ and $\M$ be symmetric monoidal stacks. Let $\HOM(\C,\M)$ denote the fibered category with objects $(T,\Li)$ consisting of a scheme $T\to S$ and a symmetric monoidal functor $\Li\colon \C|_T\to \M|_T$. A morphism $(T',\Li')\to (T,\Li)$ is a pair $(h,\varphi)$ consisting of an $S$-morphism $h\colon T'\to T$ and a natural isomorphism $\varphi\colon h^*\Li\to \Li'$.

  Let $\CAMS$ be the fibered category over $S$ with objects $(T,f)$ consisting of a scheme $T\to S$ and a 2-cocycle $f\colon (\A|_T)^2\to \M|_T$. A morphism $(T',f')\to (T,f)$ is a pair $(h,\psi)$ consisting of an $S$-morphism $h\colon T'\to T$ and a natural isomorphism $\psi\colon h^*f\to f'$ of 2-cocycles.

  The universal 2-cocycle $e_{(-,-)}\colon \A\times \A\to \PA$ gives a functor
    \[
      \Theta\colon \HOM(\PA,\M)\to \CAMS
    \]
  which sends a symmetric monoidal functor $(\Li,\epsilon, \mu)$ (notation as in \cite[Definition 2.1]{Borne-Vistoli}) to the 2-cocycle $(f,\uuu,\sss,\aaa)$ defined by $f(\lambda,\lambda')=\Li(e_\lala)$, $\uuu_\lambda=\Li(e_{0,\lambda},0)$, $\sss_{\lambda,\lambda'}=\Li(e_{\lambda,\lambda'},e_{\lambda',\lambda})$, and
    \[
      \aaa_{\lambda,\lambda',\lambda''}=\Li(e_{\lambda,\lambda'+\lambda''}+e_{\lambda',\lambda''},e_{\lambda,\lambda'}+e_{\lambda+\lambda',\lambda''})\,,
    \]
  where $(e_{0,\lambda},0)$, $(e_{\lambda,\lambda'},e_{\lambda',\lambda})$, and $(e_{\lambda,\lambda'+\lambda''}+e_{\lambda',\lambda''},e_{\lambda,\lambda'}+e_{\lambda+\lambda',\lambda''})$ are local sections of $R_\A$. 
  On morphisms $\Theta$ takes a morphism of symmetric monoidal functors $\Phi\colon \Li'\to \Li$ to the morphism $\Theta(\Phi)\colon \Theta(\Li')\to \Theta(\Li)$ which is just $\Theta(\Phi)_{(\lambda,\lambda')}=\Phi_{e_{\lambda,\lambda'}}\colon \Li'(e_{\lala})\to \Li(e_\lala)$.
  We leave to the reader to check that $\Theta(\Phi)$ is compatible with the 2-cocycle structure.

  \begin{rk}\label{rk:morph-det-on-gen}
    Let $\Phi\colon \Li'\to \Li$ be a morphism of symmetric monoidal functors $\Li',\Li\colon \PA\to \M$. Then for every $p,p'\in \PA$, we have that the diagram
    \[\begin{tikzcd}
      \Li'(p)\otimes\Li'(p')\ar{r}{\mu}\ar{d}{\Phi_p\otimes \Phi_{p'}} & \Li'(p+p')\ar{d}{\Phi_{p+p'}} \\
      \Li(p)\otimes\Li(p')\ar{r}{\mu} & \Li(p+p')
    \end{tikzcd}\]
    commutes. This means that the morphism $\Phi$ is determined on generators of $\PA$.
  \end{rk}

  \begin{prop}\label{prop:comparison}
  The functor $\Theta\colon \HOM(\PA,\M)\to \CAMS$ is an equivalence.
  \end{prop}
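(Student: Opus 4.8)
The plan is to exhibit an explicit quasi-inverse $\Xi\colon \CAMS\to \HOM(\PA,\M)$ and check that both composites are isomorphic to the identity; equivalently, that $\Theta$ is fully faithful and essentially surjective. The guiding principle is that $\PA=[R_\A\rightrightarrows \N^{\A^2}]$ is, by its very construction, the free symmetric monoidal stack generated by the universal $2$-cocycle $e_{(-,-)}$ (the $2$-categorical analogue of the universal property of $P_A$ recorded in Remark \ref{rk:uni-coc} and Proposition \ref{prop:ext-coc}). Thus giving a symmetric monoidal functor out of $\PA$ should amount to prescribing its values on the generators $e_{\lambda,\lambda'}$ together with structural isomorphisms witnessing the three generating relations of $R_\A$, and this is exactly the data of a $2$-cocycle. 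By Lemma \ref{lem:quot-equiv} we may pass freely between $\PA$ and the sheaf $P_\A$.

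For essential surjectivity, given a $2$-cocycle $(f,\uuu,\sss,\aaa)$ over $T$ I would define $\Li_f\colon \PA|_T\to \M|_T$ on objects by $\Li_f(e_{\lambda,\lambda'})=f(\lambda,\lambda')$, extended to an arbitrary local section of $\N^{\A^2}$ by the monoidal structure, $\Li_f\big(\sum_i e_{\lambda_i,\lambda_i'}\big)=\bigotimes_i f(\lambda_i,\lambda_i')$. On the three generating morphisms of $R_\A$, namely $e_{\lambda,\lambda'}\sim e_{\lambda',\lambda}$, $e_{0,\lambda}\sim 0$, and the associativity relation, I would set the value of $\Li_f$ to be $\sss_{\lambda,\lambda'}$, $\uuu_\lambda$, and $\aaa_{\lambda,\lambda',\lambda''}$ respectively, exactly reversing the recipe by which $\Theta$ was defined. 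By construction this gives $\Theta(\Li_f)\cong f$.

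The one genuine point to verify, and the main obstacle, is that this assignment is \emph{well defined}, i.e.\ that $\Li_f$ really extends to a symmetric monoidal functor on all of $\PA$. Since the groupoid $\{R_\A\rightrightarrows \N^{\A^2}\}$ has at most one arrow between any two objects (Lemma \ref{lem:quot-equiv}), every morphism of $\PA$ is a composite of the elementary relations, and any two such composites with the same source and target must be sent to the same morphism of $\M$, even though $\M$ may carry many parallel morphisms. This is precisely a coherence statement of Mac Lane type, and the commutativities imposed on a $2$-cocycle in Definition \ref{df:weak-2-cocycle}---the unit conditions (1), the symmetry condition (2), the triangle (3), the pentagon (4), and the hexagon (5)---are exactly the relations among the generating relations of $R_\A$ that guarantee it. I would therefore reduce the verification to these five axioms, mirroring the coherence theorem for symmetric monoidal categories; the resulting checks are tedious but purely formal.

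Finally, for full faithfulness I would invoke Remark \ref{rk:morph-det-on-gen}: a morphism $\Phi\colon \Li'\to \Li$ of symmetric monoidal functors is determined by its components $\Phi_{e_{\lambda,\lambda'}}$ on the generators, and these components constitute precisely a morphism $\Theta(\Li')\to \Theta(\Li)$ of $2$-cocycles, compatibly with $\uuu,\sss,\aaa$. Conversely, every morphism of $2$-cocycles arises in this way and extends uniquely over $\PA$ by monoidality, so $\Theta$ is a bijection on Hom-sets. Together with essential surjectivity this shows $\Theta$ is an equivalence; naturality in $T\to S$ is automatic, since the universal $2$-cocycle, the generators, and all the structural isomorphisms are stable under pullback, so $\Theta$ is in fact an equivalence of fibered categories over $S$.
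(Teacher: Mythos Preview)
Your proposal is correct and follows essentially the same strategy as the paper's proof: both argue full faithfulness via Remark \ref{rk:morph-det-on-gen} (a morphism of symmetric monoidal functors out of $\PA$ is determined on the generators $e_{\lambda,\lambda'}$), and both obtain essential surjectivity by defining $\Li$ on the generators $e_{\lambda,\lambda'}$ and on the three generating morphisms of $R_\A$ via $f$, $\uuu$, $\sss$, $\aaa$, and then extending monoidally. The one difference is emphasis: the paper simply says ``extend it linearly'' and asserts $\Theta(\Li)=(f,\uuu,\sss,\aaa)$, whereas you correctly isolate the well-definedness of this extension as the substantive point and explain that the five coherence axioms in Definition \ref{df:weak-2-cocycle} are precisely the relations among the generating relations of $R_\A$ needed to make any two parallel composites land on the same morphism in $\M$; this is exactly the Mac~Lane--type coherence check the paper leaves implicit.
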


  \begin{proof}
  We first show that $\Theta$ is fully faithful. This may be checked on fibers.

  To see that $\Theta$ is full, let $\varphi\colon \Theta(\Li')\to \Theta(\Li)$ be a morphism of 2-cocycles. Then $\varphi$ gives morphisms $\varphi_{\lambda,\lambda'}\colon \Li'(e_{\lala})\to \Li(e_{\lala})$ which via Remark \ref{rk:morph-det-on-gen} gives a morphism of symmetric monoidal functors $\Phi\colon \Li'\to \Li$ such that $\Theta(\Phi)=\varphi$. Hence $\Theta$ is full.

  To see that $\Theta$ is faithful, suppose that $\Theta(\Phi')=\Theta(\Phi)$. Then $\Phi'$ and $\Phi$ agree on generators and again, by Remark \ref{rk:morph-det-on-gen} they must be equal. This shows that $\Theta$ is faithful and completes the proof.

  To show that $\Theta$ is essentially surjective, note that given a 2-cocycle $(f,\uuu,\sss,\aaa)$ we may define a symmetric monoidal functor $\Li\colon \PA\to \M$ on generators by $\Li(e_{\lambda,\lambda'})=f(\lambda,\lambda')$, $\Li(e_{0,\lambda},0)=\uuu_\lambda$, $\Li(e_{\lambda,\lambda'},e_{\lambda',\lambda})=\sss_{\lambda,\lambda'}$,
    \[
      \Li(e_{\lambda,\lambda'+\lambda''}+e_{\lambda',\lambda''},e_{\lambda,\lambda'}+e_{\lambda+\lambda',\lambda''})=\aaa_{\lambda,\lambda',\lambda''}\,,
    \]
  and extend it linearly. Then $\Theta(\Li)=(f,\uuu,\sss,\aaa)$ and hence $\Theta$ is in fact surjective.
  \end{proof}

  \begin{cor}\label{cor:comparison}
    We have an equivalence $\HOM(P_\A,\M)\to \CAMS$.
  \end{cor}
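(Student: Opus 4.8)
The plan is to deduce the corollary immediately from Proposition \ref{prop:comparison} together with Lemma \ref{lem:quot-equiv}, by reducing the source $\HOM(P_\A,\M)$ to the source $\HOM(\PA,\M)$ of the already-established equivalence $\Theta$. The key observation is that the canonical map $c\colon \PA\to P_\A$ of Lemma \ref{lem:quot-equiv} is not merely an equivalence of stacks but an equivalence of \emph{symmetric monoidal} stacks: it is the functor identifying isomorphic objects of the stackified groupoid $[R_\A\rightrightarrows \N^{\A^2}]$, and the symmetric monoidal structure on $\PA$ descends along it to the one on $P_\A$. Hence precomposition with $c$ makes sense on symmetric monoidal functors.

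First I would fix a quasi-inverse $d\colon P_\A\to \PA$ to $c$, again as symmetric monoidal functors, with the monoidal natural isomorphisms $c\circ d\simeq \id$ and $d\circ c\simeq \id$ provided by Lemma \ref{lem:quot-equiv}. Since both $c$ and $d$ restrict to symmetric monoidal equivalences $\PA|_T\to P_\A|_T$ and $P_\A|_T\to \PA|_T$ for every $T\to S$, precomposition yields functors
\[
  c^*\colon \HOM(P_\A,\M)\to \HOM(\PA,\M)\,,\qquad
  d^*\colon \HOM(\PA,\M)\to \HOM(P_\A,\M)\,,
\]
defined on objects by $\Li\mapsto \Li\circ c$ and $\Li\mapsto \Li\circ d$ respectively (on morphisms, a natural isomorphism $\varphi\colon h^*\Li\to \Li'$ is sent to its whiskering with $c$, resp.\ $d$). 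The monoidal isomorphisms $c\circ d\simeq \id$ and $d\circ c\simeq \id$ whisker to natural isomorphisms $c^*\circ d^*\simeq \id$ and $d^*\circ c^*\simeq \id$ of functors between the $\HOM$-categories, so $c^*$ is an equivalence of fibered categories over $S$, with quasi-inverse $d^*$.

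Finally I would compose with the equivalence $\Theta\colon \HOM(\PA,\M)\to \CAMS$ of Proposition \ref{prop:comparison} to obtain the desired equivalence
\[
  \Theta\circ c^*\colon \HOM(P_\A,\M)\to \CAMS\,,
\]
a composite of equivalences and hence itself an equivalence. Concretely this sends a symmetric monoidal functor $\Li\colon P_\A\to \M$ to the 2-cocycle with $f(\lambda,\lambda')=\Li(c(e_{\lambda,\lambda'}))$ and the structure isomorphisms $\uuu,\sss,\aaa$ read off from $\Li$ applied to the images under $c$ of the relevant sections of $R_\A$, exactly as in the proof of Proposition \ref{prop:comparison}. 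The only point requiring genuine (though routine) verification is that $c$ is symmetric monoidal and that whiskering preserves the coherence data, which is precisely the content already packaged in Lemma \ref{lem:quot-equiv}; granting that, there is no substantive obstacle, and the corollary is purely formal.
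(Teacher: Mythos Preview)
Your proposal is correct and is exactly the intended argument: the paper states the result as an immediate corollary with no proof, and the deduction you give---precomposing with the equivalence $\PA\simeq P_\A$ of Lemma \ref{lem:quot-equiv} and then applying Proposition \ref{prop:comparison}---is precisely what is meant. The only small caveat is that Lemma \ref{lem:quot-equiv} is phrased as an equivalence of stacks rather than of symmetric monoidal stacks, so the monoidal compatibility of $c$ is, as you note, a routine check rather than something already recorded in the lemma.
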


\subsection*{The stack associated to a 2-cocycle}
We will now associate to a 2-cocycle $f\colon \A\times \A\to \DIV_{S_\et}$ a stack $S_{\A,f}$ which will be equivalent to the root stack associated of the universal extension $\PA\to \QA\to \A$ and the symmetric monoidal functor $\Li_f\colon \PA\to \DIV_{S_\et}$ corresponding to $f$. 

\begin{df}\label{def:SAf}
  Let $f\colon \A\times \A\to \M$ be a 2-cocycle. We define $S_{\A,\M,f}$ to be the fibered category over $S$ with objects $(T,\zeta,\kappa)$, consisting of an $S$-scheme $h\colon T\to S$, a 1-cochain $\zeta\colon \A|_T\to \M|_T$, and a natural isomorphism $\kappa\colon \zeta \otimes \zeta\simeq h^*f\otimes (\zeta\circ \Sigma)$.

  A morphism $(T',\zeta',\kappa')\to(T,\zeta,\kappa)$ is a pair $(\varphi, \eta)$ consisting of a morphism of $S$-schemes $\varphi\colon T'\to T$ and a natural isomorphism $\eta\colon \varphi^*\zeta\to \zeta'$ such that the diagram
    \[
      \begin{tikzcd}[column sep = 4.5em]
        \varphi^*\zeta\otimes \varphi^*\zeta \ar{r}{\eta\otimes \eta}\ar{d}{\varphi^*(\kappa)} & \zeta'\otimes\zeta' \ar{d}{\kappa'} \\
        (h')^*f\otimes (\varphi^*\zeta\circ \Sigma) \ar{r}{\id\otimes (\eta\circ \id_\Sigma)} & (h')^*f\otimes (\zeta'\otimes \Sigma)
      \end{tikzcd}
    \]
  commutes.

  In case $\M=\DIV_S$ we write $S_{\A,f}=S_{\A,\M,f}$.
\end{df}


Any 2-cocycle $f\colon \A\times \A\to \DIV_{S_\et}$ factors through a symmetric monoidal functor $\Li_f\colon \PA\to \DIV_{S_\et}$ from which we get a root stack $S_{\PA,\QA,\Li_f}$.

\begin{prop}
  We have a canonical equivalence of fibered categories
    \[
      S_{\A, f}\simeq S_{\PA,\QA,\Li_f}.
    \]
\end{prop}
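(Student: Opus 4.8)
The plan is to build the equivalence by hand on objects and morphisms and then check compatibility with pullback. Recall that an object of the root stack $S_{\PA,\QA,\Li_f}$ over $h\colon T\to S$ is a pair $(\Ei,\alpha)$ with $\Ei\colon h^*\QA\to\DIV_{T_\et}$ a symmetric monoidal functor and $\alpha\colon h^*\Li_f\xrightarrow{\sim}\Ei\circ h^*j$ an isomorphism of symmetric monoidal functors, where $j\colon\PA\to\QA$, $p\mapsto(p,0)$, is the universal inclusion. Let $\iota\colon\A\to\QA$, $\lambda\mapsto(0,\lambda)$, be the set-theoretic section. From $(\Ei,\alpha)$ I set $\zeta:=\Ei\circ\iota$; this is a morphism of fibered categories with $\zeta(0)=\Ei(0,0)\simeq\mathbbm{1}$ (since $(0,0)$ is the unit of $\QA$ and $\Ei$ is monoidal), i.e. a 1-cochain. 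The identity $\iota(\lambda)+\iota(\lambda')=j(e_{\lambda,\lambda'})+\iota(\lambda+\lambda')$ in $\QA$ (both sides equal $(e_{\lambda,\lambda'},\lambda+\lambda')$), together with the monoidal structure of $\Ei$ and the isomorphism $\alpha$ (recall $\Li_f(e_{\lambda,\lambda'})=f(\lambda,\lambda')$), yields a natural isomorphism
\[
\kappa_{\lambda,\lambda'}\colon\zeta(\lambda)\otimes\zeta(\lambda')\xrightarrow{\sim}f(\lambda,\lambda')\otimes\zeta(\lambda+\lambda'),
\]
and hence an object $(T,\zeta,\kappa)$ of $S_{\A,f}$. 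Conversely, given $(\zeta,\kappa)$ I define $\Ei(p,\lambda):=\Li_f(p)\otimes\zeta(\lambda)$ and assemble the comparison isomorphism $\Ei(p,\lambda)\otimes\Ei(p',\lambda')\xrightarrow{\sim}\Ei((p,\lambda)+(p',\lambda'))$ by composing the symmetry of $\M$, the monoidal structure $\mu$ of $\Li_f$, and $\kappa$; the isomorphism $\alpha$ is induced by $\Ei(j(p))=\Li_f(p)\otimes\zeta(0)\simeq\Li_f(p)$.

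The heart of the argument, and the main obstacle, is checking that these two assignments are well defined and mutually inverse at the level of coherence data. Concretely, one must verify that $\Ei$ as just defined satisfies the associativity, unit, and braiding axioms of a symmetric monoidal functor \emph{precisely} when $\kappa$ is compatible with the structure isomorphisms $\uuu,\sss,\aaa$ and the cocycle axioms of Definition \ref{df:weak-2-cocycle}. This is a diagram chase identical in spirit to the verification that $\M\times_f\A$ is a symmetric monoidal stack and to the proof of Proposition \ref{prop:comparison}: the associator $\aalpha$ and braiding $\ggamma$ of $\QA\simeq\PA\times_{e_{(-,-)}}\A$ are exactly those of the twisted construction of the previous subsection, built from the \emph{universal} 2-cocycle, and $\Li_f$ carries these to the structure isomorphisms $\sss,\aaa$ of $f$. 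Thus the pentagon and hexagon for $f$ translate into the associativity and hexagon coherence of $\Ei$, while the unit conditions translate into the compatibility of $\kappa$ with $\uuu$. I expect this matching of constraints to be the longest but entirely mechanical part.

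For morphisms I proceed on generators. A morphism $(\Ei',\alpha')\to(\Ei,\alpha)$ is a monoidal natural isomorphism $\Ei'\to\Ei$ compatible with the $\alpha$'s; restricting along $\iota$ gives a natural isomorphism $\eta\colon\zeta'\to\zeta$, and its compatibility with $\kappa,\kappa'$ follows from monoidal naturality applied to the identity above. In the other direction, a morphism $(\varphi,\eta)$ in $S_{\A,f}$ extends uniquely to a monoidal natural isomorphism on $\QA$: by the argument of Remark \ref{rk:morph-det-on-gen} such a transformation is determined on the generators $j(\PA)$ and $\iota(\A)$, on $j(\PA)$ it is forced to agree with $\alpha,\alpha'$ (hence is the identity through $\Li_f$), and on $\iota(\A)$ it is the given $\eta$; compatibility with the monoidal structures is again checked on generators.

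Finally I would check that both assignments commute with the pullback functors $h^{\ast}$ defining the two fibered categories over $S$, so that the object- and morphism-level constructions assemble into a morphism of stacks in each direction. That the round trips are naturally isomorphic to the identity is then immediate: starting from $(\Ei,\alpha)$ one recovers $\Ei(p,\lambda)\simeq\Ei(j(p))\otimes\Ei(\iota(\lambda))\simeq\Li_f(p)\otimes\zeta(\lambda)$ using the decomposition $(p,\lambda)=j(p)+\iota(\lambda)$, while starting from $(\zeta,\kappa)$ one recovers $\Ei\circ\iota(\lambda)=\Li_f(0)\otimes\zeta(\lambda)\simeq\zeta(\lambda)$ using $\Li_f(0)\simeq\mathbbm{1}$. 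All isomorphisms produced are canonical, so the resulting equivalence $S_{\A,f}\simeq S_{\PA,\QA,\Li_f}$ is canonical as claimed.
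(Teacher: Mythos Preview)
Your proposal is correct and follows essentially the same approach as the paper: the paper defines the functor $S_{\A,f}\to S_{\PA,\QA,\Li_f}$ on objects by $\Ei(p,\lambda)=h^*\Li_f(p)\otimes\zeta(\lambda)$ with comparison isomorphism $\mu\otimes\kappa$ (exactly your formula), and then checks essential surjectivity, fullness, and faithfulness rather than writing out the inverse explicitly. Your description of the inverse via $\zeta=\Ei\circ\iota$ and the identity $\iota(\lambda)+\iota(\lambda')=j(e_{\lambda,\lambda'})+\iota(\lambda+\lambda')$ is precisely what underlies the paper's remark that ``every object in $S_{\PA,\QA,\Li_f}$ is isomorphic to an object whose diagram is strictly commuting,'' so the two arguments differ only in presentation.
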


\begin{proof}
Let us define a functor $\vartheta\colon S_{\A, f}\simeq S_{\PA,\QA,\Li_f}$ on objects as follows: given an object $(T,\zeta,\kappa)$ of $S_{\A,f}$, we define a symmetric monoidal functor 
  \[
    \begin{split}
      \Ei\colon h^*\QA & \to \DIV_{T_\et} \\
      (p,\lambda) & \mapsto h^*\Li_f(p)\otimes \zeta(\lambda)
    \end{split}  
  \]
with comparison isomorphisms 
  \[
    \Ei(p,\lambda)\otimes\Ei(p',\lambda')=h^*\Li_f(p)\otimes h^*\Li_f(p')\otimes \zeta(\lambda)\otimes\zeta(\lambda')\xrightarrow{\mu\otimes \kappa} h^*\Li_f(p+p')\otimes h^*f(\lambda,\lambda')\otimes \zeta(\lambda+\lambda')\,,
  \]
where $\mu$ is the comparison isomorphism of $\Li_f$. Let $\gamma\colon \PA\to \QA$ be the usual inclusion. Then we get an \emph{equality} $\Ei\circ h^*\gamma=h^*\Li_f$. 

Given a morphism $(\varphi, \eta)\colon (T',\zeta',\kappa')\to(T,\zeta,\kappa)$ in $S_{\A,f}$ we define a natural isomorphism $\varphi^*\Ei\to \Ei'$ by $(h')^*\Li_f(p)\otimes \varphi^*\zeta(\lambda)\xrightarrow{\id\otimes \eta} (h')^*\Li_f(p)\otimes \zeta'(\lambda)$. We leave to the reader to check compatibility with $\kappa'$ and $\kappa$. 

The functor $\vartheta$ is essentially surjective since every object in $S_{\PA,\QA,\Li_f}$ is isomorphic to an object whose diagram is strictly commuting. The functor $\vartheta$ is full since a natural isomorphism $\varphi^*\Ei\to \Ei'$ where $\Ei'\circ(h')^*\gamma = (h')^*\Li = \varphi^*\Ei\circ(h')^*\gamma$ comes from a morphism in $S_{\A, f}$. We leave to the reader to check that $\vartheta$ is faithful. 
\end{proof}

\section{Deligne--Faltings data from ramified $D(A)$-covers}\label{sec:DF-from-cov}
Let $A$ be a finite abelian group.
Recall that every $D(A)$-cover $f\colon X\to S$ comes with a canonical splitting
\[f_*\oj_X\cong \bigoplus_{\lambda\in A}\Li_\lambda\] and multiplication morphisms
$\Li_\lambda\otimes \Li_{\lambda'}\to \Li_{\lambda+\lambda'}$
for every $\lambda,\lambda'\in A$,
which we think of as global sections \[s_{\lambda,\lambda'}\in \Gamma(S,\Li^{-1}_\lambda\otimes\Li^{-1}_{\lambda'}\otimes\Li_{\lambda+\lambda'})\,.\]
\begin{rk}
  We have that $f\colon X\to S$ is a $D(A)$-torsor if and only if every $s_{\lambda,\lambda'}$ is invertible.
\end{rk}

The quotient stack $\X=[X/D(A)]$ has a canonical $D(A)$-torsor $p\colon X\to \X$ and we have a canonical splitting of $p_*\oj_X$ indexed by the elements of $A$.

\begin{df}
With the notation above we write $\oj_\X[\lambda]$ for the line bundle which is the direct summand of $p_*\oj_X$ of weight $\lambda$, so that
\[p_*\oj_X\cong \bigoplus_{\lambda\in A}\oj_\X[\lambda]\,.\]
\end{df}

We explain the notation in the following remark:

\begin{rk}
Recall that $\oj_\X$-modules corresponds to equivariant $\oj_X$-modules and since $f\colon X\to S$ is affine, these in turn corresponds to equivariant $f_*\oj_X$-modules on $S$. The module $\oj_\X$ may hence be thought of as the $f_*\oj_X$-module $f_*\oj_X$ on $S$ together with the $A$-grading given by the action.
The canonical $D(A)$-torsor $p\colon X\to \X$ corresponds to a morphism of stacks $\X\to BD(A)$ and the character $\lambda\colon D(A)\to \Gm_{m}$ gives a morphism $\lambda\colon BD(A)\to B\Gm_{m}$. The stack $B\Gm_{m}$ has a canonical $\Gm_{m}$-torsor $\spec \ZZ\to B\Gm_{m}$ which is the relative spectrum of a $\ZZ$-graded
$B\Gm_{m}$-algebra. The graded piece of weight 1 is just $\oj_{B\Gm_{m}}[1]$, that is, $\oj_{B\Gm_{m}}$ shifted by 1. This means that $\oj_{B\Gm_{m}}[1]$ is the $\ZZ$-graded $\ZZ$-module which is 0 in every degree except in degree $-1$ where it is $\ZZ$.
Pulling back $\oj_{B\Gm_{m}}[1]$ along $\lambda\colon BD(A)\to B\Gm_{m}$ we get the line bundle $\oj_{BD(A)}[\lambda]$, which is $\oj_{BD(A)}$ shifted by $\lambda$.
If we now pull back $\oj_{BD(A)}[\lambda]$ along $\X\to BD(A)$ we get the line bundle $\oj_\X[\lambda]$ which is nothing but $\oj_\X$ shifted by $\lambda$.
\end{rk}

\begin{rk}\label{rk:counit}
Let $\pi\colon \X=[X/D(A)]\to S$ be the structure morphism. First note that $\pi_*\oj_\X[\lambda]=\Li_\lambda$.
We have a counit \[\varepsilon_\lambda\colon \pi^*\pi_*\oj_\X[\lambda]=\Li_\lambda\otimes_{\oj_S}\oj_\X\to \oj_\X[\lambda]\]
which is $s_{\lambda,\lambda'}\colon \Li_\lambda\otimes\Li_{\lambda'}\to \Li_{\lambda+\lambda'}$ in degree $\lambda'\in A$.
We call the corresponding generalized effective Cartier divisor
  \[
    (\Ei_{\lambda},\varepsilon_{\lambda})=((\pi^*\pi_*\oj_\X[\lambda])^\vee\otimes \oj_\X[\lambda], \varepsilon_\lambda)=(\pi^*\Li_\lambda^\vee[\lambda], \varepsilon_\lambda)\,,
  \] 
the \emph{universal divisor} associated to the character $\lambda\in A$ and we call $\Ei_{\lambda}$ the \emph{universal line bundle} associated to the character $\lambda\in A$.

Similarly, for every pair of universal line bundles $\Ei_\lambda$ and $\Ei_{\lambda'}$ with characters $\lambda,\lambda'\in A$, we have a morphism
\[\oj_S\cong \pi_*\Ei_{\lambda}\otimes\pi_*\Ei_{\lambda'}\to \pi_*(\Ei_{\lambda}\otimes\Ei_{\lambda'})\cong \Li_{\lambda}^{\vee}\otimes\Li_{\lambda'}^{\vee}\otimes\Li_{\lambda+\lambda'}\,,\]
which is nothing but $s_{\lambda,\lambda'}$. Let $(\Li_{\lala},s_\lala):=(\Li_{\lambda}^{\vee}\otimes\Li_{\lambda}^{\vee}\otimes\Li_{\lambda+\lambda'}, s_\lala)$ be the corresponding generalized effective Cartier divisor.
\end{rk}

\begin{rk}\label{rk:iso-2-coc}
  Note that we have canonical isomorphisms
    \[
      \Ei_\lambda\otimes\Ei_{\lambda'}\cong \pi^*(\Li_\lambda^{-1}\otimes\Li_{\lambda'}^{-1}\otimes \Li_{\lambda+\lambda'})\otimes \Ei_{\lambda+\lambda'}\,.
    \]
\end{rk}

\begin{df}\label{df:sym-mon-func}
The \emph{2-cocycle associated to} $X$ is
  \[\begin{split}
    f_X\colon A\times A & \to \DIV S \\
    (\lambda,\lambda') & \mapsto (\Li_{\lala},s_\lala)\,,
  \end{split}\]
with $\uuu, \sss$, and $\aaa$ chosen to be the canonical natural isomorphisms. 
We define the \emph{1-cochain associated to} $\X$ as
  \[\begin{split}
    \zeta_\X\colon A & \to \DIV \X \\
    \lambda & \mapsto (\Ei_{\lambda},\varepsilon_{\lambda})\,,
  \end{split}\]
where $(\Ei_{\lambda},\varepsilon_{\lambda})$ is the universal line bundle defined in Remark \ref{rk:counit}.
Finally, we define 
  \[
    \kappa_X\colon \zeta\otimes\zeta\to \pi^*f_X\otimes (\zeta\circ\Sigma)  
  \]
to be the canonical isomorphism of Remark \ref{rk:iso-2-coc}.
  %
  %
\end{df}

\begin{prop}\label{lem:local}
We have an isomorphism of stacks \[[X/D(A)]\simeq  S_{A, f_X}\] where $S_{A, f_X}$ is the stack defined in Definition \ref{def:SAf}.
\end{prop}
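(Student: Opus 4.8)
The plan is to build the equivalence directly from the data assembled in Definition \ref{df:sym-mon-func}. The triple consisting of the pullback $2$-cocycle $\pi^*f_X$, the $1$-cochain $\zeta_\X\colon A\to\DIV\X$, $\lambda\mapsto(\Ei_\lambda,\varepsilon_\lambda)$, and the isomorphism $\kappa_X\colon\zeta_\X\otimes\zeta_\X\simeq\pi^*f_X\otimes(\zeta_\X\circ\Sigma)$ of Remark \ref{rk:iso-2-coc} is, by construction, an object of $S_{A,f_X}$ lying over $\X=[X/D(A)]$. Since $S_{A,f_X}$ is a stack, this is the same datum as a morphism of $S$-stacks $\Phi\colon\X\to S_{A,f_X}$, and I would prove that $\Phi$ is an equivalence by exhibiting an explicit quasi-inverse. (One could first reduce, étale-locally on $S$, to the case $S=\spec R$ with all $\Li_\lambda$ trivial, so that $X=\spec R[\{x_\lambda\}]/(x_\lambda x_{\lambda'}-s_{\lambda,\lambda'}x_{\lambda+\lambda'})$ and $f_X$ is the constant cocycle $(\lambda,\lambda')\mapsto(\oj_S,s_{\lambda,\lambda'})$, but the construction below is natural enough to run globally.)

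The key step is the construction in the opposite direction. Given an object $(T,\zeta,\kappa)$ of $S_{A,f_X}$ over $h\colon T\to S$, write $\zeta(\lambda)=(\M_\lambda,t_\lambda)$ with $(\M_0,t_0)=(\oj_T,1)$, and set $\Li'_\lambda=h^*\Li_\lambda\otimes\M_\lambda$. Using the canonical identification $\Li_\lambda\otimes\Li_{\lambda'}\otimes(\Li_\lambda^\vee\otimes\Li_{\lambda'}^\vee\otimes\Li_{\lambda+\lambda'})\cong\Li_{\lambda+\lambda'}$, each $\kappa_{\lambda,\lambda'}$ supplies a multiplication $\Li'_\lambda\otimes\Li'_{\lambda'}\to\Li'_{\lambda+\lambda'}$; since every morphism in $\DIV$ is an isomorphism of the underlying line bundle, these multiplications are all isomorphisms. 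Hence $\bigoplus_{\lambda\in A}\Li'_\lambda$ is an $A$-graded $\oj_T$-algebra whose graded multiplication maps are isomorphisms, so its relative spectrum $Y\to T$ is a $D(A)$-torsor. The sections $t_\lambda\in\Gamma(T,\M_\lambda)$ give maps $h^*\Li_\lambda\to\Li'_\lambda$ which, because $\kappa_{\lambda,\lambda'}$ carries $t_\lambda t_{\lambda'}$ to $h^*s_{\lambda,\lambda'}\cdot t_{\lambda+\lambda'}$, assemble into a $D(A)$-equivariant $\oj_T$-algebra homomorphism $h^*f_*\oj_X\to\bigoplus_\lambda\Li'_\lambda$, that is, an equivariant morphism $Y\to X$. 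The pair $(Y\to T,\,Y\to X)$ is an object of $[X/D(A)](T)$, and the same recipe on morphisms defines a functor $\Psi$ which I claim is quasi-inverse to $\Phi$.

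Finally I would check that $\Phi$ and $\Psi$ are mutually quasi-inverse: starting from an object of $[X/D(A)]$ with graded pieces $\Li'_\lambda$ and equivariant structure $h^*\Li_\lambda\to\Li'_\lambda$, pulling back the universal divisors of Remark \ref{rk:counit} recovers exactly $\M_\lambda=(h^*\Li_\lambda)^\vee\otimes\Li'_\lambda$ together with the section $t_\lambda$, so both composites are naturally isomorphic to the identities. The main obstacle is the coherence bookkeeping: one must verify that the three commutativities imposed on $\kappa$ relative to $\uuu,\sss,\aaa$ in Definition \ref{df:weak-2-cocycle} translate precisely into the unit, commutativity, and associativity axioms of the algebra $\bigoplus_\lambda\Li'_\lambda$ — associativity matching the $\aaa$-compatibility — and that morphisms and $2$-morphisms correspond on both sides. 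This coherence can also be bypassed: by the previous proposition together with Lemma \ref{lem:quot-equiv} one has $S_{A,f_X}\simeq S_{\PA,\QA,\Li_{f_X}}\simeq S_{P_A,Q_A,\Li_X}$, reducing the claim to $[X/D(A)]\simeq S_{P_A,Q_A,\Li_X}$, which may be checked étale-locally from Borne--Vistoli's local presentation of the root stack \cite[Proposition 4.18]{Borne-Vistoli} and the universal ramified cover $\spec\ZZ[Q_A]\to\spec\ZZ[P_A]$ of Remark \ref{rk:mon-ram-cov}.
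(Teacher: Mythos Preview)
Your proposal is correct and follows essentially the same approach as the paper: both directions agree with the paper's construction, namely the forward map via $(\zeta_\X,\kappa_X)$ and the inverse via the $A$-graded algebra $\bigoplus_\lambda h^*\Li_\lambda\otimes\M_\lambda$ whose spectrum is a $D(A)$-torsor equipped with an equivariant map to $X$. You are somewhat more explicit than the paper about the coherence bookkeeping and you additionally sketch an alternative reduction via $S_{A,f_X}\simeq S_{P_A,Q_A,\Li_X}$, but the core argument is the same.
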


\begin{proof}
The stack $\X=[X/D(A)]\to S$ has a universal object $(X,\varphi)$ where $X\to \X$ is the canonical $D(A)$-torsor and $\varphi\colon X\to X\times_S\X$ the canonical $D(A)$-equivariant morphism over $\X$. In a diagram:
  \[
    \begin{tikzcd}
      X \ar{r}{\varphi}\ar{dr} & X\times_S\X \ar{d} \\
      & \X\,.
    \end{tikzcd}
  \]
Using Definition \ref{df:sym-mon-func} we obtain an object $(\zeta_X,\kappa_X)$ defining a morphism $\X\to S_{A,f_X}$. 

Conversely, from the universal object $(f_X, \zeta,\kappa)$ on the stack $\pi\colon \Y\to S$, we construct an $\oj_\Y$-algebra whose underlying module is
\[\bigoplus_{\lambda\in A}\pi^*\Li_{\lambda}\otimes\zeta(\lambda)\]
and with multiplication $m$, defines via $\kappa$ and the commutative diagram
  \[
    \begin{tikzcd}
      (\pi^*\Li_{\lambda}\otimes\zeta(\lambda))\otimes (\pi^*\Li_{\lambda'}\otimes\zeta(\lambda'))\ar{r}{m}\ar{d}{\sim} &  \pi^*\Li_{\lambda+\lambda'}\otimes\zeta(\lambda+\lambda')\ar{d}{\sim} \\
      \pi^*\Li_{\lambda}\otimes\pi^*\Li_{\lambda'}\otimes \zeta(\lambda)\otimes \zeta(\lambda') \ar{r}{\id\otimes \kappa_{\lambda,\lambda'}} & \pi^*\Li_{\lambda}\otimes\pi^*\Li_{\lambda'}\otimes f_X(\lambda,\lambda')\otimes \zeta(\lambda+\lambda')\,.
    \end{tikzcd}
  \]
This defines a $D(A)$-torsor $Y_{(\zeta,\kappa)}$ on $\Y$ and the global sections defining $\zeta$ yields a $D(A)$-equivariant morphism $Y_{(\zeta,\kappa)}\to \X\times_SX$. This defines a morphism $\Y\to \X$ and we leave to the reader to check that the two functors are quasi-inverse to each other.   
\end{proof}

\section{Stacky covers as root stacks}\label{sec:birat}

\begin{df}
An algebraic stack $\X$ admitting a coarse moduli space $\pi\colon \X\to S$ is \emph{tame} if the functor $\pi_*\colon \QCoh(\X)\to \QCoh(S)$ is exact.
\end{df}

\begin{df}\label{df:st-cov}
Let $\X$ be a tame stack with finite diagonalizable stabilizers at geometric points and let $S$ be a scheme. We say that $\pi\colon\X\to S$ is a \emph{stacky cover} if it is
\begin{enumerate}
  \item flat, proper, of finite presentation,
  \item a coarse moduli space, and
  \item for any morphism of schemes $T\to S$, the base change $\pi|_T\colon \X_T\to T$ has the property that $(\pi|_T)_*$ takes invertible sheaves to invertible sheaves.
\end{enumerate}
We denote by $\STCOV$ the $(2,1)$-category of stacky covers.
\end{df}

In this section we describe how to reconstruct a stacky cover $\X\to S$ from logarithmic data on $S$.

\begin{rk}
A flat good moduli space $\pi\colon\X\to S$ takes vector bundles to vector bundles. This follows from \cite[Theorem 4.16]{Alper}. Indeed, $\pi_*$ preserves coherence and flatness relative to $S$. If $\Ei$ is a vector bundle on $\X$, then it is flat over $S$ since $\pi$ is flat. Hence $\pi_*\Ei$ is coherent and flat, or equivalently, locally free of finite rank.
\end{rk}

\begin{rk}\label{rk:local-structure}
A stacky cover $\pi\colon \X\to S$ will \'etale locally on $S$ look like a quotient of a ramified cover. That is, for every geometric point $\bar{s}\in S$, there is an \'etale neighborhood $U\to S$ of $\bar{s}$, an abelian group $A$, and a ramified $D(A)$-cover $X\to U$ such that $\X\times_SU\simeq [X/D(A)]$. Indeed, by the local structure theorem for tame stacks \cite[Theorem 3.2]{Abramovich--Olsson--Vistoli} there is a finite $f\colon X\to U$ with action of $\stab(\bar{s})$
such that $\X\times_SU\simeq [X/\stab(\bar{s})]$. But $\stab(\bar{s})$ is of the form $\stab(\bar{s})=D(A)$ since $\X$ has diagonalizable stabilizers at geometric points. Since $X\to \X\times_SU$ is finite flat of finite presentation and $\X\to S$ is finite flat of finite presentation, we get that $f\colon X\to U$ is finite, flat, and of finite presentation. It remains to show that $f_*\oj_X$ looks fppf locally like the regular representation. Since $p\colon X\to \X\times_SU$
is a $D(A)$-torsor, we get a splitting
  \[p_*\oj_X\cong \bigoplus_{\lambda\in A}\oj_\X[\lambda]\]
and since $(\pi|_U)_*$ is exact and takes line bundles to line bundles, we get that \[f_*\oj_X= (\pi|_U)_*p_*\oj_X\cong \bigoplus_{\lambda\in A}\Li_{\lambda}\]
where $\Li_\lambda=(\pi|_U)_*\oj_\X[\lambda]$. Hence we conclude that $X\to U$ is a $D(A)$-cover.
\end{rk}

\subsection*{The relative Picard functor}
Let $\Pic_{\X/S}\colon (\Sch/S)\to \Ab$ denote the relative Picard functor, i.e., the fppf sheafification of the functor
    \[
        U\mapsto \Pic(U\times_S\X)/\Pic(U)\,.
    \]
By \cite[Section 2]{Brochard-Functeur-de-Picard} $\Pic_{\X/S}$ sits in a exact sequence of Picard stacks
    \[
        0\to B\Gm_{m,S}\to \pi_*B\Gm_{m,\X}\to \Pic_{\X/S}\to 0\,.
    \]

\begin{lem}\label{lem:piBGm}
    The stack $\pi_*B\Gm_{m,\X}$ is algebraic and locally of finite presentation. 
\end{lem}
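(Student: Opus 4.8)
The plan is to recognise $\pi_*B\Gm_{m,\X}$ as the Picard stack of $\X/S$ and then invoke the algebraicity theorem of Brochard. First I would unwind the pushforward: by the defining adjunction a section of $\pi_*B\Gm_{m,\X}$ over an $S$-scheme $U$ is a morphism $\X_U:=\X\times_SU\to B\Gm_{m}$, that is, a line bundle on $\X_U$, and the morphisms are isomorphisms of line bundles. Thus $\pi_*B\Gm_{m,\X}$ is the fibered category over $(\Sch/S)$ whose fiber over $U$ is the groupoid of line bundles on $\X_U$; fppf descent for line bundles shows that it is a stack, and it is precisely the relative Picard stack $\PIC_{\X/S}$ whose rigidification along the $\Gm_{m,S}$ of automorphisms recovers the functor $\Pic_{\X/S}$ (this is exactly what the exact sequence $0\to B\Gm_{m,S}\to \pi_*B\Gm_{m,\X}\to \Pic_{\X/S}\to 0$ records).

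Next I would check that the hypotheses of Brochard's theorem are met. By Definition \ref{df:st-cov} the morphism $\pi$ is flat, proper and of finite presentation. Moreover $\X$ is a tame stack with finite diagonalizable stabilizers at geometric points, so its inertia is finite and in particular its diagonal is quasi-compact and separated; hence $\X\to S$ is a proper, flat, finitely presented morphism of algebraic stacks in the required sense. Brochard's result \cite{Brochard-Functeur-de-Picard} then asserts that the Picard stack of such a morphism is an algebraic stack, locally of finite presentation over $S$, which is exactly the assertion.

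If one preferred not to import the general theorem, the same conclusion would follow from Artin's representability criteria, and I would organise the verification around the coherent cohomology of the structure sheaf on the geometric fibers $\X_{\bar s}$. Local finite presentation is immediate, since $\pi$ is finitely presented and cohomology of coherent sheaves commutes with filtered colimits of rings, so $\PIC_{\X/S}$ is limit preserving. The automorphisms of an object are $H^0(\X_{\bar s},\oj)^\times$, which by tameness together with property (3) of Definition \ref{df:st-cov} is $\Gm_{m}$, the deformations live in $H^1(\X_{\bar s},\oj)$ and the obstructions in $H^2(\X_{\bar s},\oj)$, while effectivity of formal objects is Grothendieck existence for the proper stack $\X$. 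The main obstacle is precisely the verification of these coherence and finiteness statements for a \emph{stack} rather than a scheme, which is the technical heart of Brochard's work; accordingly I would lean on his theorem for this step rather than reprove it here.
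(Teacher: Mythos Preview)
Your argument is correct: identifying $\pi_*B\Gm_{m,\X}$ with the relative Picard stack and invoking Brochard's algebraicity theorem is a perfectly valid way to conclude, and the hypotheses of Definition~\ref{df:st-cov} (proper, flat, of finite presentation, tame with finite diagonalizable stabilizers, coarse moduli space) are enough to feed into Brochard's result. The alternative Artin-criteria sketch is also accurate in outline, though as you note the real content is exactly what Brochard packages.

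The paper, however, does not go through the Picard stack at all. It simply cites \cite[Theorem~1.3]{Hall--Rydh:Tannaka}, which gives algebraicity and local finite presentation of the Weil restriction $f_*\Y$ for a proper, flat, finitely presented morphism $f$ and a target stack $\Y$ with suitable diagonal; here $f=\pi$ and $\Y=B\Gm_{m}$. So the paper treats $\pi_*B\Gm_{m,\X}$ as an instance of a general Hom-stack/Weil-restriction theorem rather than as a Picard stack. Your route has the advantage of reusing Brochard, already cited in the surrounding discussion, and of making the identification with $\PIC_{\X/S}$ explicit; the paper's route is a one-line black-box appeal to a more general result and avoids checking any Picard-specific hypotheses such as cohomological flatness in degree zero. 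Either reference disposes of the lemma.
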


\begin{proof}
    This follows from \cite[Theorem 1.3]{Hall--Rydh:Tannaka}.
\end{proof}

\begin{prop}
    Let $\X\to S$ be a stacky cover. Then $\Pic_{\X/S}$ is representable by an \'{e}tale algebraic space. 
\end{prop}

\begin{proof}
    The representability is \cite[Proposition 2.3.3]{Brochard-Functeur-de-Picard} so we need to show that it is \'{e}tale. We will show that $\Pic_{\X/S}$ is formally \'{e}tale. Let $i\colon T_0\hookrightarrow T$ be a closed immersion of affine schemes over $S$, defined by a quasi-coherent ideal $J$ such that $J^2=0$. Let $\J$ denote the sheaf of ideals corresponding to $\X_{T_0}\hookrightarrow \X_T$. We have an exact sequence 
        \[
            0\to \J\to \Gm_{m,\X_T} \to i_*\Gm_{m,\X_{T_0}}\to 0   
        \]  
    where the morphism $\J\to \Gm_{m,T}$ sends a local section $r$ to $1+r$, which is a homomorphism since $(1+r)(1+r')=1+(r+r')$. Hence we get a long exact sequence in cohomology 
        \[
            \dots \to H^1(\X_T,\J) \to \Pic(\X_T)\to \Pic(\X_{T_0})\to H^2(\X_T, \J)\to \dots    
        \]
    since the Leray spectral sequence and $R^1i_*\Gm_m=0$ yields $H^1(\X_{T}, i_*\Gm_{m,\X_{T_0}})=\Pic(\X_{T_0})$. Since $\X\to S$ is cohomologically of dimension zero and $T$ is affine, it follows that $H^1(\X_T, \J)=H^2(\X_T, \J)=0$ and hence $\Pic(\X_T)\to \Pic(\X_{T_0})$ is an isomorphism. Hence $\Pic_{\X/S}$ is formally \'etale. By Lemma \ref{lem:piBGm} we then conclude that $\Pic_{\X/S}$ is \'etale.    
\end{proof}


\subsection*{Deligne--Faltings data from stacky covers}
Given a stacky cover $\pi\colon \X\to S$ we want to realize $\X$ as a root stack over $S$. 
We will realize $\X$ as a root stack by defining a universal diagram
\[
    \begin{tikzcd}
        \pi^*\PA\arrow{r}[name=U]{}\ar{d}  & \DIV_\X  \\
        \pi^*\QA\arrow[Rightarrow, from=U, shorten <>=10pt]{}{\alpha}\ar{ur}[below]{{\Ei}} & \,,
    \end{tikzcd}
\]
where $\A=\Pic_{\X/S}$. 

\begin{df}
Let $\DIV_{\X/S}$ be the stack of \emph{relative divisors}. That is, the fibered category $\DIV_{\X/S}=\pi_*\DIV_\X$ with objects $(T,D)$ with $T\to S$ a scheme and $D$ an object in $\DIV(T\times_S\X)$.
\end{df}

Note that we have forgetful morphisms
    \[\begin{split}
    \DIV_{\X/S} & \to \pi_*B\Gm_{m,\X}\,, \\
    \DIV_S & \to B\Gm_{m,S}\,.
    \end{split}\]
and hence a diagram
    \[
    \begin{tikzcd}
    \DIV_S\ar{r}\ar{d} & B\Gm_{m,S}\ar{d} \\
    \DIV_{\X/S}\ar{r}\ar{d} & \PIC_{\X/S}\ar{d} \\
    \Pic_{\X/S}\ar[equal]{r} & \Pic_{\X/S}
    \end{tikzcd}
    \]
where the right column is exact by definition and the left column is exact because $\Gamma(S,\oj_S)\cong \Gamma(\X, \oj_\X)$ since $\X\to S$ is a coarse moduli space.

\begin{df}
Let $\X$ be a stacky cover. We define $\PIC_{\X/S}^{triv}$ to be the fibered category (which is equivalent to a sheaf in sets) with objects that are triples $(T,\Ei,t)$ where
\begin{itemize}
\item $T\to S$ is a scheme,
\item $\Ei$ is a line bundle on $T\times_S\X$, and
\item $t\colon \oj_T\to (\pi_T)_*\Ei$ is an isomorphism.
\end{itemize}
A morphism $(T',\Ei',t')\to (T,\Ei,t)$ consists of
\begin{itemize}
    \item a morphism $f\colon T'\to T$,
    \item an isomorphism $\varphi\colon \Ei'\to f^*\Ei$ such that $((\pi_{T'})_*\varphi)\circ t'=f^*t$.
\end{itemize}
\end{df}

\textbf{WARNING:} We may have that $\pi_*(\Ei\otimes\Ei')\not\cong \oj_S$ even if $\pi_*\Ei\cong\oj_S\cong \pi_*\Ei'$:

\begin{ex}
    Let $S=\spec \ZZ[\sqrt{-5}]$ and let $\Li \subseteq \oj_S$ be the line bundle given by the non-principal ideal $I=(2, 1+\sqrt{-5})$. Let $\oj_X$ be the locally free $\ZZ/3\ZZ$-graded $\oj_S$-module $\oj_S\oplus\oj_S\oplus \Li$ and define $s_{1,1}=1+\sqrt{-5}, s_{1,2}=(1+\sqrt{-5})/2$, and $s_{2,2}=1/2$. That is, $X$ is the spectrum of the $\ZZ[\sqrt{-5}]$-algebra $\ZZ[\sqrt{-5}][x,y]/(x^3-(-2+\sqrt{-5}), xy-(1+\sqrt{-5}), y^2-2x)$, where $x$ has weight 1 and $y$ has weight 2. If we let $\X=[X/\mmu_3]$ and $\Ei=\Ei'=\oj_\X[1]$, we get $\pi_*(\Ei)\cong\pi_*(\Ei')\cong \oj_S$ since the degree-zero part of $\oj_\X[1]$ is the degree-one part of $\oj_\X$, which is $\oj_S$. On the other hand, $\Ei\otimes\Ei'=\oj_\X[1]^{\otimes 2}\cong \oj_\X[2]$ which has $\Li$ is degree zero. That is $\pi_*(\Ei\otimes\Ei')\cong\Li\not\cong \oj_S$.     
\end{ex}

%

\begin{df}\label{df:LS}
Let
\[\Lambda\colon \PIC_{\X/S}^{triv}\to \DIV_{\X/S}\]
be the functor given on objects by
\[(T,\Ei,t)\mapsto (T,\Ei,\varepsilon\circ \pi^*t)\,,\] where $\varepsilon\colon \pi^*\pi_*\Ei\to \Ei$ is the counit.
\end{df}

\subsection*{Uniqueness}

Let $\Ei$ be a line bundle on $\X$ and let $f\colon T\to \X$ be an object in the lisse-\'etale site. A lift $T\to \I_\X$ is equivalent to an automorphism $(T,f)\to (T,f)$ and hence we get a linear automorphism $\Ei(T)\to \Ei(T)$. This means that we have a linear action of $\I_\X$ on $\Ei$ and this action is via some character $\lambda\colon \I_\X\to \Gm_{m,\X}$. 

\begin{lem}
    Let $S$ be the spectrum of a strictly henselian ring and let $\X=[X/D_S(A)]$ be the quotient stack of a ramified cover $X\to S$ whose stabilizer over the closed point $\spec k\to S$ is $D_k(A)$. If $\Ei$ is a line bundle on $\X$, then $\I_\X$ acts on $\Ei$ via a character $\I_\X\to \Gm_{m,\X}$ which factors through $\I_X\to D_\X(A)$, i.e., the character of $\Ei$ corresponds to an element of $A$.  
\end{lem}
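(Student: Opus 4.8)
The plan is to pull everything back along the canonical $D(A)$-torsor $p\colon X\to \X$, reduce the statement to a concrete computation with an equivariant line bundle on the \emph{affine} scheme $X$, and then descend along the fppf cover $p$.

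First I would check that $\oj_X$ is local. By the full-stabilizer hypothesis and orbit--stabilizer, the point of $X$ over the closed point is fixed by $D_k(A)$; and since $\X\to S$ is a coarse moduli space of a tame stack its formation commutes with the base change $\spec k\to S$, so $[X_{\spec k}/D_k(A)]$ has coarse space $\spec k$. Hence $X_{\spec k}$ is this single fixed point. As $R$ is strictly henselian, $\oj_X$ is a finite product of local rings, and the one-point fibre forces a single factor. Therefore $\oj_X$ is local and connected; in particular $\Pic(X)=0$.

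Next I would use that a line bundle on $\X=[X/D(A)]$ is the same as a $D(A)$-equivariant line bundle on $X$, and that $p^*\Ei$ is the underlying bundle together with its equivariant structure. Since $\Pic(X)=0$ I may trivialize $p^*\Ei\cong \oj_X$; the equivariant structure then becomes a descent datum, i.e. a unit $c\in \Gamma(D_S(A)\times_S X,\oj)^\times=\oj_X[A]^\times$, and the cocycle and unit conditions say precisely that $c$ is group-like. As $\oj_X$ is connected, the group-like elements of $\oj_X[A]$ are exactly the elements of $A$ (Remark \ref{rk:gp-like}). Thus the equivariant structure is a character $\mu\colon D_X(A)\to \Gm_{m,X}$ with $\mu\in A$, and the automorphism $g$ in the stabilizer acts on the fibre of $p^*\Ei$ by the value $\mu(g)$ (compare Remark \ref{rk:act-char}). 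Consequently the character of $p^*\Ei$ is the composite $\I_X\hookrightarrow D_X(A)\xrightarrow{\mu}\Gm_{m,X}$, where $\I_X$ is the stabilizer subgroup to which $\I_\X$ pulls back. Since $A=D(D(A))$ is constant, $\mu$ is a global character of $D_\X(A)$; and because $p$ is an fppf cover and $\Gm_{m,\X}$ is a sheaf, the equality of the two characters of $\I_\X$ may be checked after pullback to $X$, where it holds. This realizes $\lambda$ as the composite $\I_\X\to D_\X(A)\xrightarrow{\mu}\Gm_{m,\X}$, i.e. as the element $\mu\in A$.

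The main obstacle is conceptual rather than computational: the analogous lifting statement for an \emph{arbitrary} group-like element of the inertia Hopf algebra $\oj_X[A]/I$ is false, as the characteristic-two example in the remark preceding Theorem \ref{thm:etale} shows. The crux is therefore that $\Ei$ arises from a genuine line bundle on $\X$, so its inertia character is constrained by the triviality of $\Pic(X)$---which the full-stabilizer hypothesis over the closed point guarantees---rather than by any invertibility of $|A|$. The only mild subtlety is that the chosen $\mu$ is independent of the trivialization, which holds because $\oj_X$ has no units of nonzero weight (the image of such a unit in the weight-zero residue field at the fixed point would have to be a unit of nonzero weight).
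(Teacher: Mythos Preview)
Your overall strategy—pull back to the $D(A)$-torsor $X$, use $\Pic(X)=0$, and read off the character from the equivariant structure—is sound and genuinely different from the paper's argument, which instead twists by the tautological bundle $\oj_\X[-\lambda]$ and invokes Alper's descent theorem for good moduli spaces. Your route is more hands-on and avoids that black box; the paper's route is shorter.

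However, there is a real gap. You assert that once $p^*\Ei\cong\oj_X$ is trivialized, the equivariant structure is a unit $c\in\oj_X[A]^\times$ whose cocycle condition is \emph{precisely} the group-like condition $\Delta(c)=c\otimes c$. This is only true when $D(A)$ acts trivially on $X$. In general the cocycle condition for a linearization reads (in functional notation) $c(g_1g_2,x)=c(g_1,\,g_2\!\cdot\! x)\,c(g_2,x)$, which is twisted by the action and is \emph{not} the group-like condition $c(g_1g_2,x)=c(g_1,x)\,c(g_2,x)$. Concretely, take $A=\ZZ/2\ZZ$, $\oj_X=k[x]/(x^2)$ with $x$ of weight $1$: the element $c=1+x(t-1)\in\oj_X[t]/(t^2-1)$ satisfies the linearization cocycle condition but is \emph{not} group-like. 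It is merely \emph{isomorphic} (via the non-homogeneous unit $1+x\in\oj_X^\times$) to the trivial linearization. Your closing remark about ``no units of nonzero weight'' only rules out \emph{homogeneous} units of nonzero weight; the non-homogeneous units are exactly what produce such non-group-like cocycles.

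The fix, in your spirit, is to argue with graded modules rather than cocycles. The full-stabilizer hypothesis forces every $s_{\lambda,-\lambda}$ into $\m_R$, so the maximal ideal $\m_X=\m_R\oplus\bigoplus_{\lambda\neq0}L_\lambda$ is $A$-graded. Then for any $A$-graded invertible $\oj_X$-module $M$, the one-dimensional quotient $M/\m_X M$ is concentrated in a single degree $\mu\in A$, and by graded Nakayama a homogeneous lift in degree $\mu$ generates $M$; hence $M\cong\oj_X[\mu]$ as a graded module, and the character is $\mu\in A$. This replaces the faulty ``group-like'' step and completes your argument.
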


\begin{proof}
    Over the closed point we know that $\Ei$ has character corresponding to an element $\lambda\in A$. We have a tautological line bundle $\oj_\X[\lambda]$ and $\Ei\otimes \oj_\X[-\lambda]$ has trivial character over the closed point. Hence by \cite[Theorem 10.3]{Alper}, we get that $\Ei\otimes \oj_\X[-\lambda]$ is the pullback of a line bundle on $S$, which says that it has trivial character globally. Thus $\Ei$ has character $\lambda$ globally.     
\end{proof}

\begin{lem}
Let \[\lambda\colon \I_\X\to \Gm_{m,\X}\] be a morphism of $\X$-groups and let $\Ei$ and $\Ei'$ be line bundles on $\X$ on which $\I_\X$ acts via the character $\lambda$. Then the counit \[\pi^*\pi_*(\Ei^\vee\otimes \Ei')\to \Ei^\vee\otimes \Ei'\] is an isomorphism.
\end{lem}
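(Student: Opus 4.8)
The claim is that for two line bundles $\Ei, \Ei'$ on $\X$ carrying the \emph{same} $\I_\X$-character $\lambda$, the counit $\pi^*\pi_*(\Ei^\vee\otimes\Ei')\to \Ei^\vee\otimes\Ei'$ is an isomorphism. The key observation is that $\Ei^\vee\otimes\Ei'$ carries the character $\lambda^{-1}\cdot\lambda = 0$, i.e.\ the \emph{trivial} character, so the inertia $\I_\X$ acts trivially on it. The plan is therefore to reduce to the statement that a line bundle with trivial inertia action descends to the coarse space and that the counit is an isomorphism for such a descended bundle. Since the assertion is local on $S$ for the \'etale topology and both $\pi_*$ and the formation of the counit commute with flat (in particular \'etale) base change on $S$ (as $\pi$ is flat, proper, of finite presentation and tame, so $\pi_*$ is exact and compatible with base change), I would first reduce to the case where $S=\spec R$ is strictly henselian local.

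**Local reduction to a quotient stack.**
By Remark \ref{rk:local-structure}, after such a reduction we may assume $\X=[X/D_S(A)]$ for a ramified $D(A)$-cover $X\to S$, with $\A=\Pic_{\X/S}$ and with $A$ identified via the stabilizer at the closed point. Write $\mathcal{M}=\Ei^\vee\otimes\Ei'$. By the previous lemma (the one immediately preceding this statement), the character of $\mathcal{M}$ is $\lambda-\lambda=0\in A$, so $\I_\X$ acts trivially on $\mathcal{M}$. The plan is then to invoke \cite[Theorem 10.3]{Alper} exactly as in the proof of the previous lemma: a line bundle on a tame stack on which the inertia acts through the trivial character is the pullback $\pi^*N$ of a line bundle $N$ on $S$. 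Here $N\cong \pi_*\mathcal{M}$ by tameness, since $\pi_*\pi^*N\cong N\otimes\pi_*\oj_\X\cong N$ (using $\pi_*\oj_\X\cong\oj_S$, which holds because $\pi$ is a coarse moduli space and the projection formula applies as $\pi$ is tame).

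**Conclusion via the projection formula.**
Once we know $\mathcal{M}\cong \pi^*N$ with $N$ a line bundle on $S$, the counit becomes the natural map $\pi^*\pi_*\pi^*N\to \pi^*N$. By the projection formula and $\pi_*\oj_\X\cong\oj_S$ we have $\pi_*\pi^*N\cong N\otimes\pi_*\oj_\X\cong N$, and under this identification the counit is identified with $\pi^*$ applied to the identity of $N$, hence an isomorphism. This completes the argument.

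**Main obstacle.**
I expect the genuinely delicate point to be the \emph{global} descent step: justifying that the inertia acting trivially forces $\mathcal{M}$ to be a pullback from $S$. The subtlety flagged by the \textbf{WARNING} above is that $\pi_*$ is not monoidal---one may have $\pi_*(\Ei\otimes\Ei')\not\cong\oj_S$ even when $\pi_*\Ei\cong\oj_S\cong\pi_*\Ei'$---so one cannot naively multiply the counits for $\Ei$ and $\Ei'$. The correct route is not to factor $\mathcal{M}$ through $\Ei$ and $\Ei'$ separately but to treat $\mathcal{M}$ directly as a trivial-character line bundle and apply \cite[Theorem 10.3]{Alper} to it; the hypothesis that $\Ei$ and $\Ei'$ share the character $\lambda$ enters \emph{only} to guarantee the triviality of the character of the tensor product $\Ei^\vee\otimes\Ei'$. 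Care is also needed to ensure that the character-triviality statement, proved over the closed point in the preceding lemma, is what globalizes; but this is exactly the content already established in that lemma's proof via \cite[Theorem 10.3]{Alper}, so the remaining work is bookkeeping rather than a new idea.
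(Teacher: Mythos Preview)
Your proposal is correct and follows the same route as the paper, which simply cites \cite[Theorem 10.3]{Alper}. You have unpacked the one-line reference: the tensor product $\Ei^\vee\otimes\Ei'$ has trivial inertia character, so Alper's result gives that it descends to $S$, after which the counit is an isomorphism by the projection formula and $\pi_*\oj_\X\cong\oj_S$.
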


\begin{proof}
This follows from \cite[Theorem 10.3]{Alper}.
\end{proof}

\begin{lem}\label{lem:unique}
Let \[\lambda\colon \I_\X\to \Gm_{m,\X}\] be a morphism of $\X$-groups and let $(\Ei,s)$ and $(\Ei',s')$ be objects in $\PIC_{\X/S}^{triv}(S)$ on which $\I_\X$ acts via the character $\lambda$. Then there exists a unique isomorphism $(\Ei,s)\to (\Ei',s')$ in $\PIC_{\X/S}^{triv}(S)$.
\end{lem}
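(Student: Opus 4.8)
The plan is to produce the required morphism as the unique preimage of a forced isomorphism under the pushforward $\pi_*$, and then to check that this preimage is automatically an isomorphism. Recall that a morphism $(\Ei,s)\to(\Ei',s')$ in $\PIC_{\X/S}^{triv}(S)$ is, by definition, an isomorphism $\varphi\colon\Ei\to\Ei'$ of line bundles on $\X$ satisfying $\pi_*\varphi\circ s=s'$; so I must show that exactly one such $\varphi$ exists.

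First I would set $M=\Ei^\vee\otimes\Ei'$. Since $\I_\X$ acts on both $\Ei$ and $\Ei'$ through the same character $\lambda$, it acts trivially on $M$, so the preceding lemma applies and the counit $\pi^*\pi_*M\to M$ is an isomorphism. Writing $L=\pi_*M$, which is a line bundle on $S$ because $\pi$ is a stacky cover, this identifies $M\cong\pi^*L$. I would also record the canonical identification $\Hom_\X(\Ei,\Ei')\cong\Gamma(\X,M)$ coming from $\Ei'\cong\Ei\otimes M$.

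The key step is to show that $\pi_*\colon\Hom_\X(\Ei,\Ei')\to\Hom_S(\pi_*\Ei,\pi_*\Ei')$ is a bijection. Using $\Ei'\cong\Ei\otimes M\cong\Ei\otimes\pi^*L$ and the projection formula, one has $\pi_*\Ei'\cong\pi_*\Ei\otimes L$, so the target is $\Hom_S(\pi_*\Ei,\pi_*\Ei\otimes L)\cong\Gamma(S,L)$; on the other hand the source is $\Gamma(\X,M)=\Gamma(\X,\pi^*L)\cong\Gamma(S,L)$, the last isomorphism being the adjunction $\pi^*\dashv\pi_*$ together with $\pi_*\oj_\X=\oj_S$ (equivalently, $\pi_*\pi^*=\id$). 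Under these identifications $\pi_*$ becomes the identity on $\Gamma(S,L)$, hence is bijective. Granting this, the condition $\pi_*\varphi\circ s=s'$ is equivalent to $\pi_*\varphi=s'\circ s^{-1}$, which has exactly one preimage $\varphi$; this simultaneously yields existence and uniqueness of the candidate morphism.

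Finally I would verify that this $\varphi$ is an isomorphism. Its image $\pi_*\varphi=s'\circ s^{-1}$ is an isomorphism, hence corresponds under the identification above to a nowhere-vanishing section of $L$, i.e.\ to an isomorphism $\oj_S\to L$. Since the inverse of the bijection $\pi_*$ is $\pi^*$ followed by the counit isomorphism $\pi^*L\cong M$, and both $\pi^*$ and the counit preserve isomorphisms, $\varphi$ corresponds to an isomorphism $\oj_\X\to M$, that is, to a trivialization of $M$; equivalently $\varphi\colon\Ei\to\Ei'$ is an isomorphism, as required. The genuinely hard content is entirely absorbed into the preceding lemma (the counit being an isomorphism, via Alper's theorem); granting it, the present statement is formal, and the only thing to watch is that the unique morphism produced is nowhere-vanishing rather than merely a global section---which is exactly what preservation of isomorphisms by $\pi^*$ guarantees.
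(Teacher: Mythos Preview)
Your proof is correct and reaches the same conclusion as the paper---that $\pi_*\colon\Hom_\X(\Ei,\Ei')\to\Hom_S(\pi_*\Ei,\pi_*\Ei')$ is a bijection---but by a somewhat different route. The paper passes to an \'etale cover where $\X\simeq[X/D(A)]$ and argues with the explicit $A$-grading: writing $\Ei\simeq\pi^*\pi_*(\oj_\X[-\lambda]\otimes\Ei)\otimes\oj_\X[\lambda]$ shows $\Ei$ is generated in degree $-\lambda$, so a map $\Ei\to\Ei'$ is determined by, and arises from, its degree-$(-\lambda)$ part, which is exactly $\pi_*\Ei\to\pi_*\Ei'$ after twisting. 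You instead work globally: you apply the preceding counit lemma directly to $M=\Ei^\vee\otimes\Ei'$ to get $M\cong\pi^*L$, and then the projection formula and $\pi_*\oj_\X=\oj_S$ identify both Hom sets with $\Gamma(S,L)$. Your argument is chart-free and makes cleaner use of what was just established; the paper's argument is more hands-on and shows concretely where the bijection comes from in local coordinates. One small point: your assertion that ``under these identifications $\pi_*$ becomes the identity'' is correct but deserves a sentence---it is the compatibility of the projection formula isomorphism with pushforward of morphisms, which unwinds from the definition of the projection map via adjunction. Finally, you explicitly verify that the resulting $\varphi$ is an isomorphism (by pulling back the nowhere-vanishing section of $L$), a step the paper leaves implicit.
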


\begin{proof}
We have a morphism
\[\begin{split}\psi\colon\pi_*\HOM_{\oj_\X}(\Ei,\Ei') & \to \HOM_{\oj_S}(\pi_*\Ei,\pi_*\Ei') \\ \varphi & \mapsto \pi_*\varphi\end{split}\]
and we want to show that this is an isomorphism. Indeed, then there will be a unique morphism $\Ei\to \Ei'$ mapping to the morphism \[\pi_*\Ei\xrightarrow{s^{-1}}\oj_S\xrightarrow{s'}\pi_*\Ei'\,,\] which is what we want to prove.
It is enough to show that $\psi$ is an isomorphism after passing to an \'{e}tale cover and hence we may assume that $\X\simeq[X/D(A)]$ for a ramified cover $X\to S$ and that $\lambda$ comes from a character of $D(A)$, which we think of as an element of $A$. We have a tautological sheaf $\oj_\X[\lambda]$ and \[\Ei\simeq \pi^*\pi_*(\oj_\X[-\lambda]\otimes \Ei)\otimes\oj_\X[\lambda]\,.\]
Hence $\Ei$ is generated as an $\oj_\X$-module in degree $-\lambda$ and any morphism $\Ei\to \Ei'$ is completely determined by what it does in degree $-\lambda$. By the projection formula we get $\pi_*\Ei\simeq \pi_*(\oj_\X[-\lambda]\otimes \Ei)\otimes\Li_\lambda\simeq \Ei_{-\lambda}\otimes\Li_\lambda$ and hence
\[\begin{split}\HOM(\pi_*\Ei,\pi_*\Ei') & \simeq \HOM(\Ei_{-\lambda}\otimes\Li_{\lambda},\Ei'_{-\lambda}\otimes\Li_{\lambda}) \\ & \simeq \HOM(\Ei_{-\lambda},\Ei'_{-\lambda})\,.\end{split}\]
Hence we conclude that $\Ei\to \Ei'$ is completely determined by $\pi_*\Ei\to \pi_*\Ei'$. On the other hand, since $\Ei$ and $\Ei'$ are both generated in degree $-\lambda$, any morphism $\pi_*\Ei\to \pi_*\Ei'$ induces a morphism $\Ei\to \Ei'$. Hence we conclude that \[\Hom(\Ei,\Ei')\simeq \Hom(\pi_*\Ei,\pi_*\Ei')\,.\qedhere\]
\end{proof}

\begin{cor}\label{cor:D-pic}
    Let $\X\to S$ be a stacky cover. Then the morphism
    \[\PIC_{\X/S}^{triv}\to \Pic_{\X/S}\,,\] sending a pair $(\Ei,t)$ to the class $[\Ei]$, is an equivalence of sheaves of sets.
\end{cor}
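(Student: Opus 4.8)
The plan is to regard the functor $(\Ei,t)\mapsto[\Ei]$ as a morphism of sheaves of sets (both source and target are sheaves of sets, the source by the rigidity that Lemma~\ref{lem:unique} provides) and to verify that it is simultaneously a monomorphism and an epimorphism, working fppf-locally on the base. This is legitimate because $\Pic_{\X/S}$ is by definition the fppf sheafification of $U\mapsto\Pic(U\times_S\X)/\Pic(U)$, so sections and relations in the target are detected after fppf covers. Throughout I would use that $\pi$ is flat and tame, so that the projection formula $(\pi_T)_*(\Ei\otimes\pi_T^*M)\cong(\pi_T)_*\Ei\otimes M$ holds and $(\pi_T)_*$ commutes with base change, together with axiom~(3) of Definition~\ref{df:st-cov}, which guarantees that $(\pi_T)_*$ sends line bundles to line bundles.

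For injectivity, suppose $(\Ei,t)$ and $(\Ei',t')$ are objects of $\PIC_{\X/S}^{triv}(T)$ with $[\Ei]=[\Ei']$ in $\Pic_{\X/S}(T)$. By definition of the sheafification there is an fppf cover $T'\to T$ and a line bundle $M$ on $T'$ with $\Ei'|_{T'}\cong\Ei|_{T'}\otimes\pi_{T'}^*M$. Pushing forward and using the projection formula together with the trivializations yields $\oj_{T'}\cong(\pi_{T'})_*\Ei'\cong(\pi_{T'})_*\Ei\otimes M\cong M$, so $M$ is trivial and $\Ei|_{T'}\cong\Ei'|_{T'}$. In particular the two line bundles carry the same inertia character $\lambda\colon\I_\X\to\Gm_{m,\X}$, so Lemma~\ref{lem:unique} produces a \emph{unique} isomorphism $(\Ei,t)|_{T'}\cong(\Ei',t')|_{T'}$ in $\PIC_{\X/S}^{triv}(T')$. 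The uniqueness clause forces these local isomorphisms to agree on the overlaps of the cover, hence to descend to an isomorphism over $T$; thus $(\Ei,t)$ and $(\Ei',t')$ define the same section of the sheaf $\PIC_{\X/S}^{triv}$.

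For local surjectivity, let $c\in\Pic_{\X/S}(T)$. Again by the definition of the fppf sheafification, after passing to an fppf cover $T'\to T$ we may represent $c|_{T'}$ by an honest line bundle $\Ei$ on $T'\times_S\X$. Set $L=(\pi_{T'})_*\Ei$, which is a line bundle by axiom~(3). Then $\Ei\otimes\pi_{T'}^*L^\vee$ satisfies $(\pi_{T'})_*(\Ei\otimes\pi_{T'}^*L^\vee)\cong L\otimes L^\vee\cong\oj_{T'}$ by the projection formula, and the canonical isomorphism furnishes a trivialization $t'$ making $(\Ei\otimes\pi_{T'}^*L^\vee,t')$ an object of $\PIC_{\X/S}^{triv}(T')$ whose class is $[\Ei]=c|_{T'}$. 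Hence every section of $\Pic_{\X/S}$ lifts fppf-locally, and the morphism is an epimorphism of sheaves. Combining the two steps shows $\PIC_{\X/S}^{triv}\to\Pic_{\X/S}$ is an isomorphism of sheaves of sets.

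The only genuinely nonformal input is the rigidity packaged in Lemma~\ref{lem:unique}: it is what upgrades an abstract isomorphism of the underlying line bundles to one compatible with the trivializations, and, via its uniqueness clause, what guarantees that the fppf-local isomorphisms glue. Everything else is a routine twist-by-$\pi_*$ computation. Accordingly, I expect the step requiring the most care to be the descent of the local isomorphisms in the injectivity argument, which is precisely where the uniqueness in Lemma~\ref{lem:unique} must be invoked; the verification that the projection formula and base-change compatibility apply (using flatness and tameness of $\pi$) is the other point to state explicitly.
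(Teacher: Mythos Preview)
Your argument is correct and follows the same strategy as the paper: injectivity via Lemma~\ref{lem:unique} (two line bundles differing by a pullback have the same inertia character, hence admit a unique isomorphism respecting the trivializations), and surjectivity via the twist $\Ei\mapsto \Ei\otimes(\pi^*\pi_*\Ei)^\vee$. The only difference is cosmetic: the paper works \'etale-locally and is terser, whereas you work fppf-locally and spell out the descent step explicitly using the uniqueness clause of Lemma~\ref{lem:unique}; both are valid since $\Pic_{\X/S}$ is already an \'etale algebraic space and $\pi_*\oj_\X\cong\oj_S$ makes the presheaf quotient inject into the sheafification.
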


\begin{proof}
    Let $U\to S$ be \'etale. Injectivity follows from Lemma \ref{lem:unique} since two line bundles on $\X_U$ which differ by a line bundle coming from $U$ must have the same character. Surjectivity follows from the fact that if $\Li$ is any line bundle on $\X_U$, then $((\pi^*\pi_*\Li)^\vee\otimes \Li, \varepsilon)$ represents an element in $\PIC_{\X/S}^{triv}(U)$ and $[(\pi^*\pi_*\Li)^\vee\otimes \Li]=[\Li]$\,.
\end{proof}

\subsection*{The canonical DF-datum}
Now we are in a position to define a Deligne--Faltings datum associated to $\X$. The goal is to obtain a pointed section $\Pic_{\X/S}\to \DIV_{\X/S}$, which by Proposition \ref{prop:equiv-coc-ext} would give us a 2-cocycle
\[\Pic_{\X/S}\times \Pic_{\X/S}\to \DIV_S\]
as desired.
The functor
\[
  \Lambda\colon \PIC_{\X/S}^{triv}\to \DIV_{\X/S}
\]
of Definition \ref{df:LS} is fully faithful.

Choose a quasi-inverse $\psi\colon \Pic_{\X/S}\xrightarrow{\sim} \PIC_{\X/S}^{triv}$ of the morphism of Corollary \ref{cor:D-pic}.
We denote by $\zeta_\X$ the morphism
\[
  \zeta_\X\colon \Pic_{\X/S}\xrightarrow{\psi} \PIC_{\X/S}^{triv}\xrightarrow{\Lambda} \DIV_{\X/S}\,.
\]

\begin{df}
The exact sequence
  \[
    0\to \DIV_S\xrightarrow{\pi^*} \DIV_{\X/S}\to \Pic_{\X/S}\to 0
  \]
together with the section $\zeta_\X$ defines a free extension of $\Pic_{\X/S}$ by $\DIV_S$ (Definition \ref{df:ext-cat}) which we call the \emph{free extension associated to }$\X$. The corresponding 2-cocycle 
\[
    f_\X=\pi_*(\zeta_\X(-)\otimes\zeta_\X(-))\colon \Pic_{\X/S}\times \Pic_{\X/S}\to \DIV_S
\]
is referred to as the \emph{2-cocycle associated to} $\X$. 
\end{df}

\begin{lem}\label{lem:kappa}
    There exists a canonical natural isomorphism 
        \[
            \kappa_\X\colon \zeta_\X\otimes \zeta_\X\simeq f_\X\otimes (\zeta\circ\Sigma)\,.    
        \]
\end{lem}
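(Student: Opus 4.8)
The plan is to build $\kappa_\X$ pointwise and then check naturality. Fix local sections $\lambda,\lambda'$ of $\Pic_{\X/S}$, write $\mu=\lambda+\lambda'$, and set $D=\zeta_\X(\lambda)\otimes\zeta_\X(\lambda')$, an object of $\DIV_{\X/S}$. Both $D$ and $\zeta_\X(\mu)$ have image $\mu$ in $\Pic_{\X/S}$, so the inertia $\I_\X$ acts on their underlying line bundles through the same character. The target of $\kappa_\X$ at $(\lambda,\lambda')$ is $\pi^*f_\X(\lambda,\lambda')\otimes\zeta_\X(\mu)$, whose underlying line bundle is $\pi^*\pi_*\!\big(\zeta_\X(\lambda)\otimes\zeta_\X(\lambda')\big)\otimes\zeta_\X(\mu)$; thus the task is to produce a canonical isomorphism $D\simeq\pi^*\pi_*D\otimes\zeta_\X(\mu)$ in $\DIV_{\X/S}$ and to verify it respects the distinguished sections.

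The key point is that the naive candidate here, the counit $\pi^*\pi_*D\to D$, is \emph{not} an isomorphism, because $D$ has nontrivial character $\mu$ (this is exactly the phenomenon flagged in the warning preceding Definition~\ref{df:LS}). I would therefore route through the trivial-character line bundle $D\otimes\zeta_\X(\mu)^\vee$. Applying the lemma that the counit $\pi^*\pi_*(\Ei^\vee\otimes\Ei')\to\Ei^\vee\otimes\Ei'$ is an isomorphism for line bundles with equal character (with $\Ei=\zeta_\X(\mu)$ and $\Ei'=D$) gives a canonical isomorphism $\pi^*\pi_*\!\big(D\otimes\zeta_\X(\mu)^\vee\big)\xrightarrow{\sim}D\otimes\zeta_\X(\mu)^\vee$; tensoring with $\zeta_\X(\mu)$ yields
\[
\pi^*\pi_*\big(D\otimes\zeta_\X(\mu)^\vee\big)\otimes\zeta_\X(\mu)\xrightarrow{\ \sim\ }D\,.
\]

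It then remains to identify $\pi_*\!\big(D\otimes\zeta_\X(\mu)^\vee\big)$ with $f_\X(\lambda,\lambda')=\pi_*D$. Applying $\pi_*$ to the displayed isomorphism, using the projection formula on the left (its first factor is pulled back from $S$) together with the trivialization $\pi_*\zeta_\X(\mu)\simeq\oj_S$ coming from $\zeta_\X(\mu)\in\PIC_{\X/S}^{triv}$, I obtain $\pi_*\!\big(D\otimes\zeta_\X(\mu)^\vee\big)\simeq\pi_*D=f_\X(\lambda,\lambda')$. Substituting this into the display produces the desired $\kappa_{\X,\lambda,\lambda'}\colon D\xrightarrow{\sim}\pi^*f_\X(\lambda,\lambda')\otimes\zeta_\X(\mu)$; since every arrow involved is a counit, a projection-formula isomorphism, or the chosen trivialization, it carries the canonical section of $D$ to that of the target, hence is an isomorphism in $\DIV_{\X/S}$ and is natural in $(\lambda,\lambda')$.

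Conceptually this $\kappa_\X$ is precisely the comparison isomorphism of the symmetric monoidal equivalence $\DIV_{\X/S}\simeq\DIV_S\times_{f_\X}\Pic_{\X/S}$ furnished by Proposition~\ref{prop:equiv-coc-ext} applied to the free extension associated to $\X$: under it $\zeta_\X(\lambda)$ corresponds to $(\mathbbm{1},\lambda)$, and $(\mathbbm{1},\lambda)\otimes(\mathbbm{1},\lambda')=(f_\X(\lambda,\lambda'),\mu)=(f_\X(\lambda,\lambda'),0)\otimes(\mathbbm{1},\mu)$ is exactly the target. This viewpoint also supplies the compatibility of $\kappa_\X$ with $\uuu$, $\sss$, and $\aaa$ demanded of a $2$-coboundary (Definition~\ref{df:weak-2-cocycle}), since those are the unit, symmetry, and associativity constraints of $\DIV_S\times_{f_\X}\Pic_{\X/S}$ transported along a monoidal functor. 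I expect the main obstacle to be exactly the identification in the third paragraph: one must resist comparing $D$ with its target through the counit of $\pi_*D$ (which fails for nontrivial character), and instead confirm that the concretely defined $f_\X=\pi_*(\zeta_\X(-)\otimes\zeta_\X(-))$ agrees with the retraction-cocycle $\pi_*\!\big(\zeta_\X(\lambda)\otimes\zeta_\X(\lambda')\otimes\zeta_\X(\mu)^\vee\big)$ produced by the abstract equivalence.
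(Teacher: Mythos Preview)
Your proposal is correct and follows essentially the same approach as the paper: both arguments observe that $\zeta_\X(\lambda)\otimes\zeta_\X(\lambda')$ and $f_\X(\lambda,\lambda')\otimes\zeta_\X(\lambda+\lambda')$ carry the same character, and then produce the canonical isomorphism via the projection formula after pushing down to $S$. The paper's proof is a one-sentence sketch of exactly this; you have simply unpacked the mechanism (routing through the trivial-character bundle $D\otimes\zeta_\X(\mu)^\vee$ and invoking the counit lemma) and added the conceptual interpretation via Proposition~\ref{prop:equiv-coc-ext}, which the paper omits.
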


\begin{proof}
    This follows from the fact that $\zeta_\X(\lambda)\otimes \zeta_\X(\lambda')$ and $f_\X(\lambda,\lambda')\otimes \zeta(\lambda+\lambda')$ have the same character and since there is a canonical isomorphism between them, via the using the projection formula, after pushing down to $S$. 
\end{proof}




The pair $(\zeta_\X, \kappa_\X)$ defines a morphism $\X\to S_{\A,f_\X}$ which is in fact an equivalence. 

\begin{thm}\label{thm:main}
The morphism \[\X\to S_{\A,f_\X}\] given by the pair $(\zeta_\X, \kappa_\X)$ is an isomorphism of stacks, where $\A=\Pic_{\X/S}$. Hence there exists a canonical (up to canonical isomorphism) symmetric monoidal functor $\Li\colon P_\A\to \DIV_{S_\et}$, and a canonical isomorphism of stacks $\X\to S_{(\A,\Li)}$ where $S_{(\A,\Li)}$ is the root stack associated to the building datum $(\A,\Li)$.
\end{thm}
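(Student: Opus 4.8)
The plan is to prove that the morphism $(\zeta_\X,\kappa_\X)\colon \X\to S_{\A,f_\X}$ is an isomorphism by checking this after an étale base change on $S$, where everything reduces to the explicit local model already treated in Proposition \ref{lem:local}. Since an isomorphism of algebraic stacks over $S$ may be detected étale-locally on $S$, and since the three conditions defining a stacky cover are stable under base change---condition (3) in particular ensuring that $\pi_*$ of invertible sheaves, and hence the whole construction of $\Pic_{\X/S}$, $\zeta_\X$ and $f_\X$, commutes with base change---I would first fix a geometric point $\bar s$ and pass to an étale neighborhood $U\to S$ over which Remark \ref{rk:local-structure} furnishes an isomorphism $\X\times_S U\simeq [X/D(A)]$ for a ramified $D(A)$-cover $X\to U$. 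After shrinking $U$, the character computation for line bundles on $[X/D_S(A)]$ over a strictly henselian base (the first lemma of the uniqueness discussion, together with Corollary \ref{cor:D-pic}) shows that $\A=\Pic_{\X/S}$ becomes the constant group $\underline{A}$ over $U$, with stalk $A$ at $\bar s$.

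The heart of the argument is to match the intrinsic data $(\zeta_\X,f_\X,\kappa_\X)$ with the explicit data $(\zeta_X,f_X,\kappa_X)$ attached to the cover $X$ in Definition \ref{df:sym-mon-func}. The key input is the uniqueness in Lemma \ref{lem:unique}: by construction $\zeta_\X$ is the composite of a quasi-inverse $\psi$ to the equivalence $\PIC_{\X/S}^{triv}\simeq\Pic_{\X/S}$ of Corollary \ref{cor:D-pic} with the fully faithful functor $\Lambda$, so $\zeta_\X(\lambda)$ is the divisor attached to the \emph{unique} trivialized line bundle of character $\lambda$. The universal line bundle $\Ei_\lambda=\pi^*\Li_\lambda^\vee\otimes\oj_\X[\lambda]$ of Remark \ref{rk:counit} has character $\lambda$ and satisfies $\pi_*\Ei_\lambda\cong\oj_S$ by the projection formula, hence represents exactly this class; therefore $\zeta_\X(\lambda)=(\Ei_\lambda,\varepsilon_\lambda)=\zeta_X(\lambda)$. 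Pushing the tensor product down to $S$ then matches $f_\X(\lambda,\lambda')=\pi_*(\zeta_\X(\lambda)\otimes\zeta_\X(\lambda'))$ with $(\Li_{\lala},s_{\lambda,\lambda'})=f_X(\lambda,\lambda')$, and the canonical isomorphisms of Remark \ref{rk:iso-2-coc} identify $\kappa_\X$ with $\kappa_X$.

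With this identification, Proposition \ref{lem:local} gives an isomorphism $\X\times_S U=[X/D(A)]\simeq S_{A,f_X}=S_{\A,f_\X}|_U$, and unwinding the construction in that proposition shows it is precisely the base change of $(\zeta_\X,\kappa_\X)$. Thus the morphism of the theorem is an isomorphism over each such $U$, hence an isomorphism over $S$ by descent. For the final sentence, applying Corollary \ref{cor:comparison} to $f_\X$ produces the symmetric monoidal functor $\Li\colon P_\A\to\DIV_{S_\et}$, canonical up to canonical isomorphism because that functor is an equivalence; the equivalence $S_{\A,f_\X}\simeq S_{\PA,\QA,\Li_{f_\X}}$ together with Lemma \ref{lem:quot-equiv} identifies the target with the root stack $S_{P_\A,Q_\A,\Li}=S_{(\A,\Li)}$, and composing with the isomorphism just established yields the canonical isomorphism $\X\simeq S_{(\A,\Li)}$.

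I expect the main obstacle to be the second step: carefully translating the intrinsically defined cochain and cocycle (built from $\Pic_{\X/S}$ and the abstract equivalences of Section \ref{sec:2-coc-in-sym-mon}) into the hands-on local data coming from the splitting $p_*\oj_X\cong\bigoplus_\lambda\oj_\X[\lambda]$. This includes the bookkeeping that converts a $\DIV_{\X/S}$-valued cochain $\zeta_\X$ into the $\DIV_\X$-valued cochain defining the object of $S_{\A,f_\X}(\X)$ via the adjunction $\pi^*\dashv\pi_*$, and checking that the whole construction commutes with base change so that the local isomorphisms are genuine restrictions of the single global morphism $(\zeta_\X,\kappa_\X)$.
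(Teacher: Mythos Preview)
Your proposal is correct and follows essentially the same route as the paper: reduce étale-locally via Remark \ref{rk:local-structure} to the quotient $[X/D(A)]$ of a ramified cover, identify the intrinsic data $(\zeta_\X,f_\X,\kappa_\X)$ with the explicit data of Definition \ref{df:sym-mon-func} via Remark \ref{rk:counit}, and conclude by Proposition \ref{lem:local}. The paper's proof is terser and does not spell out the uniqueness argument (Lemma \ref{lem:unique}) you use to pin down $\zeta_\X(\lambda)\cong(\Ei_\lambda,\varepsilon_\lambda)$, but this is exactly the content hidden behind its appeal to Remark \ref{rk:counit}.
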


\begin{proof}
We may work locally on $S$ and hence we may assume that $\X$ is a quotient by a ramified cover under an action of a finite diagonalizable group (Remark \ref{rk:local-structure}). By Remark \ref{rk:counit} and Proposition \ref{lem:local} we conclude that $\eta$ is an isomorphism.
\end{proof}

\subsection*{The Cartier dual of the inertia stack}
We saw in Lemma \ref{lem:unique} that any two line bundles with the same character must differ by a line bundle from $S$. 
Now we will see that when $\X\to S$ is a stacky cover which is \emph{Deligne--Mumford}, then $\Pic_{\X/S}\cong \pi_*D(\I_\X)$. This does \emph{not} hold in general if $\X$ is not Deligne--Mumford and a counter-example is obtained by taking $\X$ to be the quotient stack of the ramified cover given in Remark \ref{rk:important-example}. 


To show that there exists a line bundle with character $\lambda$ for every character $\lambda$ of $\I_\X$ when $\X$ is Deligne--Mumford, we show that it exists \'{e}tale locally (on the base) and that these local bundles may be glued to a global one.

\begin{lem}\label{lem:exist}
Let $\lambda$ be a character of $\I_\X$. Then there exists a line bundle on $\X$ on which $\I_\X$ acts with character $\lambda$.
\end{lem}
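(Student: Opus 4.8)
The plan is to follow the strategy announced just before the statement: build the line bundle étale-locally on $S$ and then glue. Since $\X\to S$ is Deligne--Mumford, the inertia $\I_\X\to\X$ is finite and unramified, so by Theorem~\ref{thm:etale} the Cartier dual $D(\I_\X)$ is étale and $\pi_*D(\I_\X)$ is an étale sheaf on $S$; a character $\lambda$ of $\I_\X$ is precisely a global section of this sheaf. First I would choose an étale cover $\{U_i\to S\}$ over which $\X_{U_i}\simeq[X_i/D(A_i)]$ for ramified covers $X_i\to U_i$, as in Remark~\ref{rk:local-structure}, refined so that $\pi_*D(\I_\X)|_{U_i}$ is the constant sheaf $A_i$ (which matches the fact that the inertia of $BD(A_i)$ has Cartier dual $A_i$). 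Then $\lambda|_{U_i}$ corresponds to an element $\lambda_i\in A_i$, and the tautological line bundle $\oj_{\X_{U_i}}[\lambda_i]$ is a line bundle on which $\I_{\X_{U_i}}$ acts through $\lambda|_{U_i}$, since its character factors through $\I_{\X_{U_i}}\to D(A_i)\to\Gm_m$. This settles local existence.

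Next I would rigidify in order to land in the sheaf $\PIC_{\X/S}^{triv}$. Setting $\Ei_i=(\pi^*\pi_*\oj_{\X_{U_i}}[\lambda_i])^\vee\otimes\oj_{\X_{U_i}}[\lambda_i]$, the factor $\pi^*\pi_*\oj_{\X_{U_i}}[\lambda_i]$ is pulled back from $U_i$ and hence has trivial character, so $\Ei_i$ still has character $\lambda|_{U_i}$; by the projection formula $\pi_*\Ei_i\cong\oj_{U_i}$, so together with the counit trivialization the pair $(\Ei_i,\varepsilon)$ defines a section of $\PIC_{\X/S}^{triv}(U_i)$, exactly the rigidification used in the proof of Corollary~\ref{cor:D-pic}.

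Finally I would glue. On the overlaps $U_{ij}=U_i\times_SU_j$ the two restrictions $(\Ei_i,\varepsilon)|_{U_{ij}}$ and $(\Ei_j,\varepsilon)|_{U_{ij}}$ are objects of $\PIC_{\X/S}^{triv}(U_{ij})$ on which $\I_\X$ acts through the common character $\lambda|_{U_{ij}}$, so by Lemma~\ref{lem:unique} there is a \emph{unique} isomorphism between them. Uniqueness forces the cocycle condition on triple overlaps automatically, and since $\PIC_{\X/S}^{triv}$ is equivalent to a sheaf of sets (Corollary~\ref{cor:D-pic}), these compatible local sections glue to a global section $(\Ei,t)\in\PIC_{\X/S}^{triv}(S)$. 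The underlying $\Ei$ is then a line bundle on $\X$ whose character is $\lambda$, as desired.

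I expect the main obstacle to be the gluing step: a priori the local bundles $\oj_{\X_{U_i}}[\lambda_i]$ need only agree on overlaps up to a line bundle pulled back from the base, so naïve descent of line bundles faces a genuine obstruction. The point of passing to $\PIC_{\X/S}^{triv}$ and invoking the uniqueness in Lemma~\ref{lem:unique} is precisely to kill this ambiguity, turning the descent problem into gluing sections of a sheaf of sets, where the cocycle condition comes for free.
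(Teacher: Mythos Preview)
Your proposal is correct and follows essentially the same approach as the paper: build the tautological bundles $\oj_{\X_{U_i}}[\lambda_i]$ étale-locally, rigidify via $(\pi^*\pi_*\,\cdot\,)^\vee\otimes\,\cdot\,$ to land in $\PIC_{\X/S}^{triv}$, and then invoke Lemma~\ref{lem:unique} to obtain unique gluing isomorphisms. The only cosmetic difference is that the paper appeals directly to Proposition~\ref{prop:gp-like-iso} (rather than Theorem~\ref{thm:etale}) to justify that the restricted character lifts to an element $\lambda_i\in A_i$, and it does not explicitly invoke Corollary~\ref{cor:D-pic} for the gluing, but the substance is identical.
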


\begin{proof}
Let $\{U_i\to S\}$ be an \'{e}tale cover such that every $\X_i:=\X\times_S{U_i}$ may be written as a quotient $\X_i\simeq [X_i/D(A_i)]$ of a ramified cover (Remark \ref{rk:local-structure}) with the property that the canonical morphism $\oj_{X_i}[A_i]\to \oj_{X_i}[\stab(X_i)]$ is an isomorphism on group-like elements (see Proposition \ref{prop:gp-like-iso}) since $\X$ is Deligne--Mumford. The character $\lambda$ pulls back to a character for each $\stab(X_i)$ which we may think of as an element $\lambda_i\in A_i$. Write $\X_i=U_i\times_S\X$. The line bundle $\oj_{\X_i}[\lambda_i]$
has character $\lambda$.

Now consider the two objects $(\Ei_i,t_i)$ and $(\Ei_j,t_j)$, where
\[\begin{split}\Ei_i & =(\pi^*\pi_*\oj_{\X_i}[\lambda_i])^\vee\otimes \oj_{\X_i}[\lambda_i]\, \mbox{ and } \\ \Ei_j & =(\pi^*\pi_*\oj_{\X_j}[\lambda_j])^\vee\otimes \oj_{\X_j}[\lambda_j]\end{split}\]
(here we write $\pi$ for both maps $\X_i\to U_i$ and $\X_j\to U_j$) and $t_i$ is the trivialization \[\begin{split}\oj_S & \cong (\pi_*\oj_{\X_i}[\lambda_i])^\vee\otimes \pi_*\oj_{\X_i}[\lambda_i] \\ & \cong \pi_*((\pi^*\pi_*\oj_{\X_i}[\lambda_i])^\vee\otimes \oj_{\X_i}[\lambda_i])\,,\end{split}\]
where the second isomorphism is the projection morphism.
Both $\Ei_i$ and $\Ei_j$ pulls back to a line bundle on $\X\times_S(U_i\times_SU_j)$ with character $\lambda$ and the trivializations $t_i$ and $t_j$ pulls back to trivializations. Hence Lemma \ref{lem:unique} gives a unique isomorphism \[\Ei_i|_{\X\times_S(U_i\times_SU_j)}\cong \Ei_j|_{\X\times_S(U_i\times_SU_j)}\] whose pushforward sends (the pullback of) $t_i$ to $t_j$. This implies that the line bundles $\Ei_i$ will glue to a global line bundle on $\X$ with character $\lambda$ and trivial pushforward.
\end{proof}

\begin{cor}
    If $\X\to S$ is a stacky cover which is Deligne--Mumford, then $\Pic_{\X/S}\cong \pi_*D(\I_\X)$.
\end{cor}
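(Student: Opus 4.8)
The plan is to exhibit the isomorphism as the \emph{character map}. Every line bundle $\Ei$ on $\X_U$ carries a linear action of the inertia $\I_{\X_U}$, and, as recalled just before Lemma~\ref{lem:unique}, this action is given by a character $\I_{\X_U}\to \Gm_{m,\X_U}$, i.e.\ a section of $D(\I_{\X_U})$ over $\X_U$, equivalently a section of $\pi_*D(\I_\X)$ over $U$. Since the pullback of a line bundle from $U$ carries the trivial character, this assignment descends to a morphism of \'etale sheaves of abelian groups
\[
  \chi\colon \Pic_{\X/S}\to \pi_*D(\I_\X)\,,
\]
under which tensor product of line bundles corresponds to addition of characters. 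The Deligne--Mumford hypothesis guarantees that $\I_\X$ is unramified over $\X$, so by Theorem~\ref{thm:etale} the Cartier dual $D(\I_\X)$ is representable by an \'etale group algebraic space and $\pi_*D(\I_\X)$ is a genuine \'etale sheaf; in particular it suffices to check that $\chi$ is an isomorphism on the small \'etale site of $S$.

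Next I would replace the source by $\PIC_{\X/S}^{triv}$ using the equivalence of Corollary~\ref{cor:D-pic}, so that a section is a pair $(\Ei,t)$ with $t$ a trivialisation of $(\pi_U)_*\Ei$, and $\chi$ sends $(\Ei,t)$ to the character of $\Ei$. Injectivity of $\chi$ is then immediate from Lemma~\ref{lem:unique}: two such pairs with the same character admit a unique isomorphism in $\PIC_{\X/S}^{triv}$, hence represent the same section. For surjectivity, given a character $\lambda$ of $\I_\X$ over $U$, Lemma~\ref{lem:exist} produces a line bundle $\Ei$ on $\X_U$ realising $\lambda$, which after replacing it by $(\pi^*\pi_*\Ei)^\vee\otimes\Ei$ with its canonical trivialisation (as in the proof of Corollary~\ref{cor:D-pic}) yields a section of $\PIC_{\X/S}^{triv}(U)$ mapping to $\lambda$.

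The substantive point is surjectivity, i.e.\ Lemma~\ref{lem:exist}, and this is exactly where the Deligne--Mumford hypothesis is essential. Working on a chart $\X_U\simeq[X/D(A)]$ from Remark~\ref{rk:local-structure}, a character of $\I_\X$ restricts to a character of the stabilizer group scheme $\stab\hookrightarrow D_{\X_U}(A)$, and one must lift it to a character of $D(A)$, i.e.\ to an element of $A$; this is precisely the surjectivity on group-like elements of Proposition~\ref{prop:gp-like-iso}, which requires $|A|$ to be invertible. Since $\X$ is Deligne--Mumford the geometric stabilizers $D(A)$ are \'etale, so $|A|$ is invertible and the lift exists; the normalised local bundles $\oj_{\X_U}[\lambda]$ are then glued by the uniqueness in Lemma~\ref{lem:unique}, completing the construction. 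With injectivity and surjectivity in hand, $\chi$ is the desired isomorphism of \'etale sheaves of abelian groups, and I expect the gluing/lifting step underlying Lemma~\ref{lem:exist} to be the only genuinely nontrivial ingredient.
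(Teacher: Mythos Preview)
Your proposal is correct and follows exactly the route the paper intends: the corollary is stated without explicit proof precisely because it is meant to follow from the character map together with Lemma~\ref{lem:unique} (injectivity, via Corollary~\ref{cor:D-pic}) and Lemma~\ref{lem:exist} (surjectivity), and you have reconstructed this argument accurately. Your additional remarks on why the Deligne--Mumford hypothesis enters --- both to make $D(\I_\X)$ \'etale via Theorem~\ref{thm:etale} and to ensure the lifting step in Lemma~\ref{lem:exist} via Proposition~\ref{prop:gp-like-iso} --- are exactly the points the paper leaves implicit.
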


\begin{ex}[Stack ramified over a nodal cubic] The following is an example of a stacky cover which is not globally a quotient of a ramified cover (but of course \'etale locally).
Consider the nodal cubic $V(y^2-x^2(x+1))\subset \spec \mathbb{C}[x,y]=\mathbb{A}^2_{\mathbb{C}}=:S$. Let $\pi\colon \X\to S$ be the 2nd root stack associated to the log structure given by $f=y^2-x^2(x+1)\in \oj_S$ and let $\A=\pi_*D(\I_\X)$. More precisely, let
$j\colon U\hookrightarrow S$ be the complement of $V(f)\hookrightarrow S$ and consider the canonical log structure $\oj_S\cap j_*\oj_U^\times\to \oj_S$. Let $\Li\colon\PP\to \DIV_{S_\et}$ be the associated Deligne--Faltings structure (see Remark \ref{rk:log}), put $\Q=\PP$, and let $\gamma\colon \PP\to \Q$ be given by multiplication by 2.
We put $\X=S_{(\PP,\Q,\Li)}$.

We have an \'{e}tale cover $U_1,U_2\to S$ where $U_1$ and $U_2$ are given as the spectrum of \[\begin{split} R_1 & =\CC[x,x^{-1},y]\, \mbox{ and} \\ R_2 & = \CC[x,(x+1)^{-1},y,w]/(w^2-(x+1))\end{split}\] respectively.
We have that $\X_{U_1}\simeq [X_1/D(\ZZ/2\ZZ)]$ and $\X_{U_2}\simeq [X_2/D(\ZZ/2\ZZ\times \ZZ/2\ZZ)]$ respectively, where
\[\begin{split}X_1 & = R_1[z]/(z^2-f)\\ X_2 & = R_2[u,v]/(u^2-f_1,v^2-f_2)\,,\end{split}\]
and $f_1=y-xw$ and $f_2=y+xw$. Over $U_1\times_SU_2$ the diagonal $\Delta\colon \ZZ/2\ZZ\to \ZZ/2\ZZ\times\ZZ/2\ZZ$ gives a $\Delta$-equivariant morphism \[\begin{split}X_1\times_{U_1}(U_1\times_SU_2) & \to X_2\times_{U_2}(U_1\times_SU_2)\end{split}\] inducing an isomorphism
\[[X_1\times_{U_1}(U_1\times_SU_2)/D(\ZZ/2\ZZ)]\simeq [X_2\times_{U_2}(U_1\times_SU_2)/D(\ZZ/2\ZZ\times\ZZ/2\ZZ)]\,.\]
We have
  \[\begin{split}P_{\ZZ/2\ZZ}\cong \N\xrightarrow{(1,0,1)}P_{\ZZ/2\ZZ\times\ZZ/2\ZZ} & =\N^{\ZZ/2\ZZ\times\ZZ/2\ZZ\times \ZZ/2\ZZ\times\ZZ/2\ZZ}/R \\ & \cong \N_{(1,1),(1,0)}\times \N_{(1,0),(0,1)}\times \N_{(0,1),(1,1)}\end{split}\]
and a commutative diagram
  \[\begin{tikzcd}\N\ar{rr}{(1,0,1)}\ar{dr}{\alpha} && \N^3\ar{dl}{\beta}\\ & (P_\A)|_{U_1\times_SU_2} & \end{tikzcd}\]
where $\alpha_{D(f_i)}(1)$ is the non-trivial element for $i=1,2$ whereas
  \[\begin{split}\beta_{D(f_1)}(1,0,0) & = 0 \\ \beta_{D(f_1)}(0,1,0) & = 0 \\ \beta_{D(f_1)}(0,0,1) & = 1 \\ \beta_{D(f_2)}(1,0,0) & = 1 \\ \beta_{D(f_2)}(0,1,0) & = 0 \\ \beta_{D(f_2)}(0,0,1) & = 0\,. \end{split}\]
(This corresponds to projection onto the second and first $\ZZ/2\ZZ$ factor respectively).
Note that $\Gamma(U_1\times_SU_2,\GI)\cong \ZZ/2\ZZ\times\ZZ/2\ZZ$ but $\Gamma(U_1\times_SU_2,P_\A)\cong \N$ (!). The canonical cover $\ZZ/2\ZZ\to \GI\times_S(U_1\times_SU_2)$ is an epimorphism of \'{e}tale sheaves which is not surjective on global sections.

From here we see that both $P_{\ZZ/2\ZZ}\to \DIV U_1\times_SU_2$ and $P_{\ZZ/2\ZZ\times\ZZ/2\ZZ}\to \DIV U_1\times_SU_2$ factor through $\N^{\A^2}/R\times_S(U_1\times_SU_2)$ and hence we can glue to a global symmetric monoidal functor \[\Li_\X\colon\N^{\A^2}/R\to \DIV_{S_\et}\,.\] Note that $\Gamma(S,P_\A)\cong \N$
and hence there cannot be a global chart for this log/DF structure.
\end{ex}

\section{Building data for stacky covers}\label{sec:building-data}
The goal of this subsection is to describe the 2-category of stacky covers via \emph{stacky building data}.

\begin{df}
Let $\A$ be an \'etale sheaf of abelian groups of finite type on a scheme $S$ and $\Li\colon P_\A\to \DIV_{S_\et}$ a symmetric monoidal functor. Then we define $\A^\perp\subseteq \A$ to be the subsheaf (of sets) defined by \[\A^\perp(U)=\{\lambda\in \A(U):\Li(e_{\lala})\simeq (\oj_S,1)\,,\forall \lambda'\in \A(U)\}\,,\] for every \'etale $U\to S$.
\end{df}

\begin{lem}
The subsheaf $\A^\perp$ is a subgroup.
\end{lem}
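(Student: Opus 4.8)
The plan is to reduce everything to the defining relations of $P_\A$ together with the fact that $\Li$ is symmetric monoidal, and then to verify the three subgroup conditions on sections. Fix an étale $U\to S$; since being a subgroup is a condition on sections that is automatically compatible with restriction, it suffices to show that $\A^\perp(U)$ is a subgroup of $\A(U)$. Recall from Definition~\ref{df:congr-rel} that in $P_\A$ the generators satisfy
\[
  e_{\lambda,\lambda'}=e_{\lambda',\lambda},\qquad e_{0,\lambda}=0,\qquad e_{\lambda,\lambda'}+e_{\lambda+\lambda',\lambda''}=e_{\lambda',\lambda''}+e_{\lambda'+\lambda'',\lambda}.
\]
Applying the symmetric monoidal functor $\Li$ (and its comparison isomorphism) turns these equalities of elements into isomorphisms in $\DIV_{S_\et}$: namely $\Li(e_{\lambda,\lambda'})\cong\Li(e_{\lambda',\lambda})$, $\Li(e_{0,\lambda})\cong(\oj_S,1)$, and $\Li(e_{\lambda,\lambda'})\otimes\Li(e_{\lambda+\lambda',\lambda''})\cong\Li(e_{\lambda',\lambda''})\otimes\Li(e_{\lambda'+\lambda'',\lambda})$. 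In other words $(\lambda,\lambda')\mapsto\Li(e_{\lambda,\lambda'})$ is a commutative $2$-cocycle valued in $\DIV_{S_\et}$, and these three isomorphisms are precisely the cocycle axioms I will exploit.

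For the identity, the unit relation $\Li(e_{0,\lambda'})\cong(\oj_S,1)$ gives $0\in\A^\perp(U)$ at once. For closure under addition, take $\lambda_1,\lambda_2\in\A^\perp(U)$ and arbitrary $\mu\in\A(U)$, and feed $(\lambda_1,\lambda_2,\mu)$ into the third relation: $\Li(e_{\lambda_1,\lambda_2})\otimes\Li(e_{\lambda_1+\lambda_2,\mu})\cong\Li(e_{\lambda_2,\mu})\otimes\Li(e_{\lambda_2+\mu,\lambda_1})$. Three of the four factors are trivial — $\Li(e_{\lambda_1,\lambda_2})$ and $\Li(e_{\lambda_2+\mu,\lambda_1})\cong\Li(e_{\lambda_1,\lambda_2+\mu})$ because $\lambda_1\in\A^\perp$, and $\Li(e_{\lambda_2,\mu})$ because $\lambda_2\in\A^\perp$ — so after the unit isomorphisms both sides collapse and $\Li(e_{\lambda_1+\lambda_2,\mu})\cong(\oj_S,1)$; as $\mu$ was arbitrary, $\lambda_1+\lambda_2\in\A^\perp(U)$. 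For inverses, take $\lambda\in\A^\perp(U)$ and feed $(\mu,\lambda,-\lambda)$ into the third relation: $\Li(e_{\mu,\lambda})\otimes\Li(e_{\mu+\lambda,-\lambda})\cong\Li(e_{\lambda,-\lambda})\otimes\Li(e_{0,\mu})$. Again the three factors $\Li(e_{\mu,\lambda})\cong\Li(e_{\lambda,\mu})$, $\Li(e_{\lambda,-\lambda})$, and $\Li(e_{0,\mu})$ are all trivial, forcing $\Li(e_{\mu+\lambda,-\lambda})\cong(\oj_S,1)$ for every $\mu$; since $\nu:=\mu+\lambda$ ranges over all of $\A(U)$ as $\mu$ does, this gives $\Li(e_{\nu,-\lambda})\cong(\oj_S,1)$ for all $\nu$, and by symmetry $\Li(e_{-\lambda,\nu})\cong(\oj_S,1)$, i.e.\ $-\lambda\in\A^\perp(U)$.

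The one point needing care — and the only place a careless argument could fail — is that the monoid of isomorphism classes of objects of $\DIV_{S_\et}(U)$ is \emph{not} a group, so I may not cancel tensor factors. The arguments above are arranged precisely to avoid cancellation: in each case all but one factor of the cocycle identity is already isomorphic to the unit $(\oj_S,1)$, so both sides collapse via the unit isomorphisms and the unknown factor is identified with $(\oj_S,1)$ directly. I expect this to be the entire content of the lemma; the residual bookkeeping, that the operations $0$, $\lambda_1+\lambda_2$ and $-\lambda$ are computed in $\A(U)$ compatibly with restriction so that $\A^\perp$ is a subsheaf of groups rather than merely a pointwise subgroup, is formal.
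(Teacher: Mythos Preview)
Your proof is correct and takes essentially the same approach as the paper: both verify the subgroup axioms on sections by instantiating the $2$-cocycle identity so that all but one tensor factor is already trivial, thereby avoiding any need to cancel in $\DIV_{S_\et}$. The specific substitutions differ slightly (for inverses the paper plugs in $(\lambda',-\lambda,\lambda)$ rather than your $(\mu,\lambda,-\lambda)$, and then reparametrizes), but this is cosmetic; your explicit remark about non-cancellability makes the logic a touch more transparent than the paper's terser version.
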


\begin{proof}
Let us write $A=\A(U)$ for an \'etale $U\to S$ and similarly for $\A^\perp$. Clearly $0\in A^\perp$. Let $\lambda\in A^\perp$ and write $s_\lala$ for the global section of $\Li_U(e_{\lala})$. Then for all $\lambda'\in A$ we have
\[\begin{split}
s_{-\lambda,\lambda'} & \simeq s_{-\lambda,\lambda'}s_{\lambda'-\lambda,\lambda} \\ & \simeq s_{\lambda,-\lambda}s_{0,\lambda'} \\ & \simeq s_{\lambda,-\lambda} \\ & \simeq 1\,.
\end{split}\]
Hence $-\lambda\in A^\perp$.
If $\lambda_1,\lambda_2\in A^\perp$, then $s_{\lambda_1,\lambda_1'}$ and $s_{\lambda_2,\lambda_2'}$ are invertible for every $\lambda_1',\lambda_2'\in A$. In particular, $s_{\lambda_1,\lambda_2}$, $s_{\lambda_2,\lambda}$, and $s_{\lambda_2+\lambda,\lambda_1}$ are all invertible for every $\lambda\in A$.
The identity \[s_{\lambda_1,\lambda_2}s_{\lambda_1+\lambda_2,\lambda}=s_{\lambda_2,\lambda}s_{\lambda_2+\lambda,\lambda_1}\] then implies that $s_{\lambda_1+\lambda_2,\lambda}$ is invertible for every $\lambda\in A$.
Hence $A^\perp$ is a subgroup of $A$.
\end{proof}

\begin{df}\label{df:stacky-building-data}
Let $\STDATA_S$ be the (2,1)-category of \emph{stacky building data on} $S$ which consists of the following data:
\begin{enumerate}
  \item its objects consists of pairs $(\A,\Li)$ where
  \begin{itemize}
    \item $\A$ is an \'etale sheaf of abelian groups of finite type on $S$, and
    \item $\Li\colon P_\A\to \DIV_{S_\et}$ is a symmetric monoidal functor,
  \end{itemize}
  and these satisfy the condition that the subsheaf \[\A^\perp=\{\lambda\in \A: \Li_{\lambda,\lambda'}\simeq (\oj_S,1)\,, \forall \lambda'\in \A\}=0\,.\]
  We refer to the pair $(\A,\Li)$ as a \emph{stacky building datum} or just a \emph{building datum}.
  \item for each pair of objects $(\A,\Li)\to (\A',\Li')$ a 1-morphism $(\A,\Li)\to(\A',\Li')$ is a triple $(\varphi,\Hj,\tau)$ consisting of
  \begin{itemize}
    \item a morphism $\varphi\colon \A'\to \A$,
    \item a symmetric monoidal functor $\Hj\colon Q_{\A'}\to \DIV_{S_\et}$, and
    \item an isomorphism $\tau\colon (\Hj\circ \gamma_{\A'})\otimes(\Li\circ\varphi_P)\simeq \Li'$ of symmetric monoidal functors, where $\varphi_P\colon P_{\A'}\to P_\A$ is the morphism induced by $\varphi$.
  \end{itemize}
  \item a 2-morphism
  \[\begin{tikzcd}(\A,\Li)
  \arrow[bend left=50]{r}[name=U]{(\varphi,\Hj,\tau)}
  \arrow[bend right=50]{r}[name=D,below]{(\varphi',\Hj',\tau')}
  & (\A',\Li')
  \arrow[Rightarrow, to path=(U) -- (D), shorten <>=10pt]{}
  \end{tikzcd}\]
  is given by
  \begin{itemize}
    \item an equality $\varphi'=\varphi$, and
    \item a natural isomorphism \[\theta\colon \Hj\simeq \Hj'\] such that
    the induced diagram
    \[\begin{tikzcd} (\Hj\circ\gamma)\otimes (\Li\circ\varphi_P)\ar{rr}{(\theta\circ\gamma)\otimes\id}[swap]{\simeq}\ar{dr}{\tau} & & (\Hj'\circ\gamma)\otimes (\Li\circ\varphi_P)\ar{dl}[swap]{\tau'} \\ & \Li' & \end{tikzcd}\]
    commutes.
  \end{itemize}
  \item The composition bifunctor \[\Map((\A,\Li),(\A',\Li'))\times \Map((\A',\Li'),(\A'',\Li''))\to \Map((\A,\Li),(\A'',\Li''))\] sends a pair of objects $(\varphi_1, \Hj_1, \tau_1), (\varphi_2, \Hj_2, \tau_2)$ to $(\varphi_1\circ \varphi_2, \Hj_2\otimes(\Hj_1\circ (\varphi_2)_Q), \tau_3)$, where $\tau_3$ is the isomorphism 
    \[
        \begin{split}
                  & ((\Hj_2\otimes(\Hj_1\circ (\varphi_2)_Q))\circ \gamma_{\A'})\otimes (\Li\circ(\varphi_1)_P\circ (\varphi_2)_{P})  \\
          \simeq\  & (\Hj_2\circ \gamma_{\A''})\otimes (\Hj_1\circ(\varphi_2)_Q\circ\gamma_{\A''})\otimes (\Li\circ(\varphi_1)_P\circ (\varphi_2)_{P}) \\
          \simeq\  & (\Hj_2\circ \gamma_{\A''})\otimes (\Hj_1\circ\gamma_{\A'}\circ(\varphi_2)_P)\otimes (\Li\circ(\varphi_1)_P\circ (\varphi_2)_{P}) \\
          \simeq\  & (\Hj_2\circ \gamma_{\A''})\otimes (((\Hj_1\circ\gamma_{\A'})\otimes (\Li\circ (\varphi_1)_P))\circ(\varphi_2)_P) \\ 
          \simeq\  & (\Hj_2\circ \gamma_{\A''})\otimes (\Li'\circ(\varphi_2)_P) \\ \simeq\ & \Li'' \,,
        \end{split}
    \]
  where the the first three isomorphisms are canonical, the second to last isomorphism is induced by $\tau$, and the last isomorphism is given by $\tau'$. The composition bifunctor is defined on morphisms as follows: A pair of 2-morphisms  
    \[
      \begin{tikzcd}(\A,\Li)
      \arrow[bend left=50]{r}[name=U]{(\varphi_1,\Hj_1,\tau_1)}
      \arrow[bend right=50]{r}[name=D,below]{(\varphi'_1,\Hj'_1,\tau'_1)}
      & (\A',\Li')
      \arrow[Rightarrow, to path=(U) -- (D), shorten <>=10pt]{}
      \arrow[bend left=50]{r}[name=V]{(\varphi_2,\Hj_2,\tau_2)}
      \arrow[bend right=50]{r}[name=E,below]{(\varphi'_2,\Hj'_2,\tau'_2)}
      & (\A'',\Li'')
      \arrow[Rightarrow, to path=(V) -- (E), shorten <>=10pt]{}
      \end{tikzcd}
    \]
  is sent to the 2-morphism 
  \[
    \begin{tikzcd}(\A,\Li)
    \arrow[bend left=50]{r}[name=U]{(\varphi_1\circ \varphi_2, \Hj_2\otimes(\Hj_1\circ (\varphi_2)_Q), \tau_3)}
    \arrow[bend right=50]{r}[name=D,below]{(\varphi'_1\circ \varphi'_2, \Hj'_2\otimes(\Hj'_1\circ (\varphi'_2)_Q), \tau'_3)}
    & (\A'',\Li'')
    \arrow[Rightarrow, to path=(U) -- (D), shorten <>=10pt]{}
    \end{tikzcd}
  \]
  given by the equality $\varphi_1\circ \varphi_2=\varphi'_1\circ \varphi'_2$ and the natural isomorphism 
    \[
      (\varphi_1\circ \varphi_2, \Hj_2\otimes(\Hj_1\circ (\varphi_2)_Q), \tau_3)\simeq (\varphi'_1\circ \varphi'_2, \Hj'_2\otimes(\Hj'_1\circ (\varphi'_2)_Q), \tau'_3)  
    \]
  induced by the natural isomorphisms $\Hj_1\simeq \Hj'_1$ and $\Hj_2\simeq \Hj'_2$. 

  The definition of the identity $I\to \Map((\A,\Li),(\A,\Li))$ is left to the reader.
\end{enumerate}
\end{df}

\begin{rk}
  The structure of $\STDATA_S$ is motivated by the fact that we want an equivalence of (2,1)-categories $\STCOV_S\simeq \STDATA_S$ (see Theorem \ref{thm:build}).
  Note that there is an alternative definition on $\STDATA_S$ using the language of 2-cocycles in symmetric monoidal stacks developed in Section \ref{sec:2-coc-in-sym-mon}. The objects would then be pairs $(\A, f)$ where $\A$ is an \'etale sheaf of abelian groups of finite type on $S$ and $f\colon \A\times \A\to \DIV_{S_\et}$ is a 2-cocycle (Definition \ref{df:weak-2-cocycle}). For two objects $(\A, \Li)$ and $(\A', \Li')$ corresponding to $(\A, f)$ and $(\A', f')$, the symmetric monoidal functor $\Hj$ present in Definition \ref{df:stacky-building-data} then corresponds to a 2-coboundary $f'\to f$ (Definition \ref{df:weak-2-cocycle}). Similarly, 2-morphisms in $\STDATA_S$ can be thought of as 2-morphisms of 2-coboundaries (Definition \ref{df:weak-2-cocycle}).    
  Defining $\STDATA_S$ using the language of 2-cocycles is somehow conceptually more clear but we still chose to use Borne--Vistoli's language of DF-structures since this was already present and perhaps easier to digest for people interested in this subject. 
\end{rk}

\begin{rk}
  There is also a stack-version of $\STDATA_S$ which is not hard to write down using Definition \ref{df:stacky-building-data}. 
\end{rk}

We write $S_{(\A,\Li)}=S_{(P_\A,Q_\A,\Li)}$ and we denote by $\Pic_{\X/S}^{triv}$ the sheaf obtained from $\PIC_{\X/S}^{triv}$ by identifying isomorphic objects.

\begin{prop}\label{prop:build-st-cov}
Let $(\A,\Li)$ be a stacky building datum and $\pi\colon \X=S_{\A,\Li}\to S$ the associated root stack. Then $\X\to S$ is a stacky cover and we have a canonical isomorphism $\A\simeq \Pic_{\X/S}$.
\end{prop}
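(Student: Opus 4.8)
The plan is to establish both assertions by reducing, étale-locally on $S$, to the quotient-stack description of Section~\ref{sec:DF-from-cov}. Since $\A$ is of finite type as an étale sheaf of abelian groups, after passing to an étale cover I may assume $\A=\underline{A}$ is constant for a finite abelian group $A$, so that $P_\A=\underline{P_A}$ and $Q_\A=\underline{Q_A}$. By Corollary~\ref{cor:comparison} the functor $\Li$ corresponds to a $2$-cocycle $f\colon A\times A\to \DIV_{S_\et}$. After a further refinement of the cover trivializing each of the line bundles underlying $f(\lambda,\lambda')$, the cocycle axioms of Definition~\ref{df:coc} become precisely the associativity, commutativity and unit axioms of Remark~\ref{rk:lin-bun}; hence $f$ is the $2$-cocycle $f_X$ of an honest ramified $D(A)$-cover $X\to S$, namely $X=\spec\bigoplus_{\lambda}\Li_\lambda$ with all $\Li_\lambda$ trivial and multiplication given by the $s_{\lambda,\lambda'}$. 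Combining Proposition~\ref{lem:local} with the equivalences $S_{\A,\Li}=S_{\PA,\QA,\Li_f}\simeq S_{\A,f}$ (Lemma~\ref{lem:quot-equiv} together with the Proposition preceding Section~\ref{sec:DF-from-cov}) then yields $\X\simeq [X/D(A)]$ over this cover.

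Next I would read off the stacky-cover axioms of Definition~\ref{df:st-cov} from this local presentation, each of the relevant properties being étale-local on $S$. The group $D(A)$ is finite diagonalizable, hence linearly reductive, so $[X/D(A)]$ is tame with finite diagonalizable geometric stabilizers (closed subgroups of $D(A)$); its good (equivalently coarse) moduli space is $S$, since the $D(A)$-invariants of $f_*\oj_X=\bigoplus_\lambda\Li_\lambda$ form the degree-zero part $\Li_0=\oj_S$. Flatness, properness and finite presentation of $\pi$ descend from the finite locally free morphism $X\to S$ along the fppf $D(A)$-torsor $X\to\X$ (flatness alternatively follows from the flatness criterion for root stacks of Section~\ref{sec:DF}, since $P_A^{int}\to Q_A^{int}$ is integral by Lemma~\ref{lem:flat}). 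For property~(3), a line bundle $\Ei$ on $\X_T$ acts through a character of $\I_{\X_T}$ corresponding to some $\lambda\in A$ (the lemma of Section~\ref{sec:birat} on strictly henselian bases), whence $\Ei\cong \pi^*(\cdot)\otimes\oj_\X[\lambda]$ and $\pi_*\Ei$ is a line bundle by the projection formula; tameness guarantees this persists after any base change. Thus $\pi\colon\X\to S$ is a stacky cover.

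For the identification $\A\simeq\Pic_{\X/S}$ I would construct a canonical map globally and check it is an isomorphism on geometric stalks. The universal symmetric monoidal functor $\Ei\colon \pi^*Q_\A\to\DIV_\X$ of the root stack sends a local section $\lambda$ of $\A$, lifted arbitrarily to $q\in Q_\A$, to a line bundle $\Ei(q)$ on $\X$; changing the lift by $\gamma_\A(p)$ replaces $\Ei(q)$ by $\Ei(q)\otimes\pi^*\Li(p)$, a twist by a bundle pulled back from $S$, so the class $[\Ei(q)]\in\Pic_{\X/S}$ is independent of the lift, and $\lambda\mapsto[\Ei(q)]$ is a homomorphism $\A\to\Pic_{\X/S}$. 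To see it is an isomorphism I pass to the local model $\X\simeq[X/D(A)]$ and invoke Section~\ref{sec:birat}: Lemmas~\ref{lem:exist} and~\ref{lem:unique} together with Corollary~\ref{cor:D-pic} show that line bundles on $\X$, modulo those pulled back from the base, are classified by their inertial character, so that $\Pic_{\X/S}$ is locally the constant sheaf on the group of such characters. Here the hypothesis $\A^\perp=0$ is essential: a section $\lambda\in\A^\perp$ is exactly one for which the cover is unramified (a torsor factor) in the direction $\lambda$, so that $\oj_\X[\lambda]$ becomes trivial in $\Pic_{\X/S}$; in general one obtains $\Pic_{\X/S}\cong\underline{A/A^\perp}$, and $\A^\perp=0$ makes the canonical map the identity $\underline{A}\xrightarrow{\sim}\underline{A}$.

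The main obstacle, I expect, is twofold and concentrated in the passage through the local model: first, showing that the abstract $2$-cocycle $f$ genuinely arises from a ramified cover, so that Proposition~\ref{lem:local} applies; and second, more delicately, keeping track of the role of $\A^\perp=0$ so as to recover $\A$ on the nose rather than a proper quotient, while simultaneously checking that property~(3) and the identification of $\Pic_{\X/S}$ are compatible with arbitrary base change. The remaining descent verifications (flatness, properness, finite presentation, and the gluing of the local isomorphisms $\A\simeq\Pic_{\X/S}$ into a global one) I expect to be routine.
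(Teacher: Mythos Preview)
Your strategy is correct and essentially the same as the paper's: reduce \'etale locally to a quotient of a ramified $D(A)$-cover, then read off the stacky-cover axioms and the identification $\A\simeq\Pic_{\X/S}$ from that model. The one genuine difference is how the local quotient presentation is obtained. You trivialize the line bundles in the $2$-cocycle and build $X=\spec\bigoplus_\lambda\oj_S$ by hand, then invoke Proposition~\ref{lem:local}. The paper instead uses the chart $P_A\to P_\A|_U$ and the Borne--Vistoli Cartesian square identifying $S_{Q_A/P_A}$ with a pullback of $[\spec\ZZ[Q_A]/D(Q_A^{gp})]\to[\spec\ZZ[P_A]/D(P_A^{gp})]$; after passing along the $D(P_A^{gp})$-torsor one lands directly on the \emph{universal} ramified cover $\spec\ZZ[Q_A]\to\spec\ZZ[P_A]$ of Remark~\ref{rk:mon-ram-cov}. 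The paper's route is a little slicker precisely at the point you flag as an obstacle: it never has to argue that an abstract categorical $2$-cocycle, once trivialized, satisfies the strict equalities of Definition~\ref{df:coc} (which a priori hold only up to the units coming from $\uuu,\sss,\aaa$), because it works with the monoid algebra directly. Your route is more explicit and still works, but the cleanest way to justify your ``trivialization $\Rightarrow$ strict cocycle'' step is exactly the fppf refinement implicit in the paper's torsor argument. One small correction: flatness of $\pi$ follows from Lemma~\ref{lem:free-kummer} (freeness of $Q_A$ over $P_A$, hence freeness of $\ZZ[Q_A]$ over $\ZZ[P_A]$), not from Lemma~\ref{lem:flat} as you cite it. Your treatment of $\A^\perp=0$ and of property~(3) is more explicit than the paper's (which simply says ``hence a stacky cover''), but the arguments match.
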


\begin{proof}
There is an \'etale neighborhood $\bar{s}\in U$, a finite abelian group $A$, and an epimorphism $A\to \A_U$ inducing an isomorphism $A\cong \A_{\bar{s}}$.
This implies that we have a chart
\[
\begin{tikzcd}
P_A\ar{r}\ar{d}{{\gamma_A}} & P_\A|_U\ar{d} \\
Q_A\ar{r} & Q_\A|_U
\end{tikzcd}
\]
and hence $\X_U\simeq S_{Q_A/P_A}$. The stack $S_{Q_A/P_A}$ sits in a Cartesian diagram
\[
\begin{tikzcd}
  S_{Q_A/P_A}\ar{r}\ar{d} & {[\spec\ZZ[Q_A]/D(Q_A^{gp})]}\ar{d} \\
  S\ar{r}{{\Li}} & {[\spec\ZZ[P_A]/D(P_A^{gp})]}
\end{tikzcd}
\]
and if we pull back along the canonical $D(P_A^{gp})$-torsor $\spec\ZZ[P_A]\to [\spec\ZZ[P_A]/D(P_A^{gp})]$, we get a Cartesian diagram
\[
\begin{tikzcd}
  S_{Q_A/P_A}\times_{[\spec\ZZ[P_A]/D(P_A^{gp})]}{\spec\ZZ[P_A]}\ar{r}\ar{d} & {[\spec\ZZ[Q_A]/D(A)]}\ar{d} \\
  S\times_{[\spec\ZZ[P_A]/D(P_A^{gp})]}{\spec\ZZ[P_A]}\ar{r} & {\spec\ZZ[P_A]}\,.
\end{tikzcd}
\]
By Remark \ref{rk:mon-ram-cov}, $\spec\ZZ[Q_A]\to \spec\ZZ[P_A]$ is a $D(A)$-cover. This means that $\X$ is fppf locally the quotient of a ramified cover and hence a stacky cover.

Let $\Ei\colon Q_{\pi^*\A}\to \DIV_{\X_\et}$ be the universal DF-object. We have a canonical set-theoretic section $\iota\colon \A\to Q_{\A}$ sending a local section $\lambda$ to $(0,\lambda)$, and we define a morphisms of sheaves of sets $\beta'\colon \A\to \PIC_{\X/S}^{triv}$ by sending $\lambda$ to the class represented by $(\Ei_\lambda,\pi_*\varepsilon_\lambda)$ and let $\beta\colon \A\to \Pic_{\X/S}$ be the composition of $\beta'$ with the canonical morphism $(\Ei_\lambda,\pi_*\varepsilon_\lambda)\mapsto [\Ei_\lambda]$ in Corollary \ref{cor:D-pic}. 
To see that $\beta$ is an equivalence we may work \'etale locally on the base and hence assume that $\X=[X/D(A)]\simeq S_{(A,\Li)}$.
Then $\beta$ is an epimorphism since every line bundle on $\X$ is of the form $\Li_0\otimes \Ei_\lambda$, where $\Li_0$ is the pullback of a line bundle on $S$ and $\Ei_\lambda$ is the universal line bundle associated to $\lambda\in \A$. We also see that $\beta$ is a homomorphism since $\Ei_\lambda$ and $\oj_\X[\lambda]$ differ by a line bundle coming from $S$ and hence they define the same class in $\Pic_{\X/S}$. 
Furthermore, since $\varepsilon_\lambda\colon \oj_\X\to \Ei_\lambda$ is given by $s_{\lambda',\lambda}$ in degree $\lambda'$, we get that $\varepsilon_\lambda$ is an isomorphism if and only if $\lambda\in \A^\perp=0$. This means that zero is the unique element mapped to the trivial element $(\oj_\X, \pi_* 1)$ by $\beta'$. Hence zero is the unique element mapping to zero by $\beta$ and we conclude that $\beta$ is an isomorphism. 
\end{proof}

\begin{rk}
There is a monoidal structure on $\STDATA_S$ which on objects is defined by
\[(\A,\Li)\otimes (\A',\Li'):=(\A\oplus\A',\Li\otimes \Li')\,.\]
\end{rk}

\begin{thm}\label{thm:build}
There exists an equivalence of (2,1)-categories \[\STCOV_S\simeq \STDATA_S\] between the 2-category of stacky covers over $S$ and the 2-category of stacky building data on $S$.
\end{thm}

\begin{proof}
We define \[\Phi\colon \STDATA_S\to \STCOV_S\] on objects by sending $(\A,\Li)$ to the corresponding root stack $\X=S_{(\A,\Li)}$. Hence it is clear from Theorem \ref{thm:main} that $\Phi$ will be essentially surjective. For building data $D=(\A,\Li)$ and $D'=(\A',\Li')$ the functor
\[\Phi_{D,D'}\colon \Map((\A,\Li),(\A',\Li'))\to \Map(S_{(P_\A,Q_\A,\Li)},S_{(P_{\A'},Q_{\A'},\Li')})\]
is defined as follows. Put $\pi\colon \X=S_{(\A,\Li)}\to S$ and $\pi'\colon\Y=S_{(\A',\Li')}\to S$ and let $(\Ei,\alpha)$ be the universal object on $\X$ and $(\Ei',\alpha')$ the universal object on $\Y$.

Let $(\varphi, \Hj, \tau)\colon D=(\A,\Li)\to D'=(\A',\Li')$ be a 1-morphism. A morphism $f\colon\X\to \Y$ is completely determined by the pullback of the universal diagram on $\Y$ to $\X$ along $f$. That is, we need to define $f^*\Ei'$ and $f^*\alpha'$ in the diagram
  \[
    \xymatrix{\pi^*P_{\A'}\ar[r]^{\pi^*\Li'}\ar[d]_{\gamma_{\A'}} \ar@{}[dr] |(.3){f^*\alpha'}  & \DIV_{\X_{\et}} \\ 
    \pi^*Q_{\A'} \ar[ur]_{f^*\Ei'} &  \,.}
  \]
Put \[f^*\Ei':=\pi^*\Hj\otimes (\Ei\circ \varphi_Q)\] and define $f^*\alpha'$ as the composition \[f^*\Ei'\circ\gamma_{\A'}=(\pi^*\Hj\circ\gamma_{\A'})\otimes(\Ei\circ\varphi_Q\circ\gamma_{\A'})\xrightarrow[\sim]{\alpha} (\pi^*\Hj\circ\gamma_{\A'})\otimes(\pi^*\Li\circ\varphi_P)\xrightarrow{\pi^*\tau}\pi^*\Li'\,.\]
This defines a morphism of stacks $f_{(\varphi, \Hj, \tau)}\colon\X\to \Y$ and we define $\Phi$ on 1-morphisms by
\[\Phi_{D,D'}(\varphi, \Hj, \tau)=f_{(\varphi, \Hj, \tau)}\,.\]

Now assume that we have a 2-morphism $\theta$:
\[\begin{tikzcd}(\A,\Li)
\arrow[bend left=50]{r}[name=U]{(\varphi,\Hj,\tau)}
\arrow[bend right=50]{r}[name=D,below]{(\varphi,\Hj',\tau')}
& (\A',\Li')\,.
\arrow[Rightarrow, to path=(U) -- (D), shorten <>=10pt]{}
\end{tikzcd}\]
Write $f=\Phi_{D,D'}(\varphi,\Hj,\tau)$ and $g=\Phi_{D,D'}(\varphi,\Hj',\tau')$. Then we get an isomorphism
\[f^*\Ei'\simeq \pi^*\Hj\otimes (\Ei\circ\varphi_Q)\xrightarrow{\pi^*\theta\otimes\id}\pi^*\Hj'\otimes(\Ei\circ\varphi_Q)\simeq g^*\Ei'\] such that the induced diagram
\[\xymatrix{ (f^*\Ei'\circ\gamma_{\A'})\ar[rr]^{\simeq}\ar[dr]_{f^*\alpha'} & & (g^*\Ei'\circ\gamma_{\A'})\ar[dl]^{g^*\alpha'} \\ & \pi^*\Li' &}\] commutes.
This means that we get a well-defined natural transformation
\[\begin{tikzcd}\X
\arrow[bend left=50]{r}[name=U]{f}
\arrow[bend right=50]{r}[name=D,below]{g}
& \Y\,.
\arrow[Rightarrow, to path=(U) -- (D), shorten <>=10pt]{}
\end{tikzcd}\]

To show that $\Phi_{D,D'}$ is an equivalence of categories we define a quasi-inverse $\Psi$. Let $f\colon \X\to \Y$ be a morphism of stacks, where $\X$ and $\Y$ are root stacks constructed from building data $(\A,\Li)$ and $(\A',\Li')$ with universal Deligne--Faltings objects
\[\Ei\colon Q_\A\to \DIV_{\X_\et}\,, \quad \Ei'\colon Q_{\A'}\to \DIV_{\Y_\et}\,.\]
By pullback we get a morphism $\Pic_{\Y/S}\to \Pic_{\X/S}$ and by Proposition \ref{prop:build-st-cov} a morphism $\varphi\colon\A'\to \A$.
We may pullback the universal diagram on $\Y$ to get a diagram \[\xymatrix{\pi^*P_{\A'}\ar[r]^{\pi^*\Li'}\ar[d]_{\gamma_{\A'}} \ar@{}[dr] |(.3){f^*\alpha'} & \DIV_{\X_{\et}} \\ \pi^*Q_{\A'} \ar[ur]_{f^*\Ei'} & \,.}\]
There is a unit morphism $\Ei'_\lambda\to f_*f^*\Ei'_\lambda$ and applying $\pi'_*$ we get for $\lambda\in\A'$
\[\oj_S\simeq\pi'_*\Ei'_\lambda\to\pi'_*f_*f^*\Ei'_\lambda\simeq \pi_*f^*\Ei'_\lambda\,.\] Now apply $\pi^*$ to get a global section \[\oj_\X\to \pi^*\pi_*f^*\Ei'_\lambda\,.\]
This implies that we have a canonical morphism
\[(\Ei\circ\varphi_Q)_\lambda\simeq(\pi^*\pi_*f^*\Ei'_\lambda)^\vee\otimes f^*\Ei'_\lambda\to \oj_\X\otimes f^*\Ei'_\lambda\simeq f^*\Ei'_\lambda\,,\] which we may view as a global section $h\in \Gamma(S,\Hj_\lambda)$ where
\begin{equation}\label{eq:H-def}\pi^*\Hj_\lambda:=f^*\Ei'_\lambda\otimes (\Ei\circ\varphi_Q)_\lambda^\vee\,.\end{equation}
Since the inertia acts trivially on each $\pi^*\Hj_\lambda$, the symmetric monoidal functor $\pi^*\Hj=f^*\Ei'\otimes (\Ei\circ\varphi_Q)^\vee$ descends to a symmetric monoidal functor
$\Hj\colon Q_{\A'}\to \DIV_{S_\et}\,.$
We also have an isomorphism \[\tau\colon(\Hj\circ \gamma_{\A'})\otimes(\Li\circ\varphi_P)\simeq \Li'\] by descending the isomorphism
\[(\pi^*\Hj\circ\gamma_{\A'})\otimes (\Ei\circ\varphi_Q\circ\gamma_{\A'})\simeq f^*\Ei'\circ \gamma_{\A'}\xrightarrow{f^*\alpha'}\pi^*\Li'\,.\]
This defines the quasi-inverse on objects by putting $\Psi(f)=(\varphi,\Hj,\tau)$.

To define $\Psi$ on morphisms, suppose that we have a natural transformation $\eta\colon f\to g$ sitting in a diagram
\[\begin{tikzcd}\X
\arrow[bend left=50]{r}[name=U]{f}
\arrow[bend right=50]{r}[name=D,below]{g}
& \Y\,,
\arrow[Rightarrow, to path=(U) -- (D), shorten <>=10pt]{}
\end{tikzcd}\]
where $\X$ and $\Y$ are root stacks constructed from building data $(\A,\Li)$ and $(\A',\Li')$ with universal Deligne--Faltings objects
\[\Ei\colon Q_{\pi^*\A}\to \DIV_{\X_\et}\,, \quad \Ei'\colon Q_{\pi^*\A'}\to \DIV_{\Y_\et}\,,\]
$f=\Phi_{D,D'}(\varphi,\Hj,\tau)$, and $g=\Phi_{D,D'}(\varphi',\Hj',\tau')$. Then $\eta$ corresponds to a natural isomorphism
\[\eta\colon f^*\Ei'\to g^*\Ei'\] and from the construction of $\pi^*\Hj$ and $\pi^*\Hj'$ as in Equation (\ref{eq:H-def}), we see that we get a natural isomorphism $\theta\colon \Hj\to\Hj'$ such that
\[\begin{tikzcd} (\Hj\circ\gamma)\otimes (\Li\circ\varphi_P)\ar{rr}{(\theta\circ\gamma)\otimes\id}[swap]{\simeq}\ar{dr}{\tau} & & (\Hj'\circ\gamma)\otimes (\Li\circ\varphi_P)\ar{dl}[swap]{\tau'} \\ & \Li' & \end{tikzcd}\]
commutes. We define $\Psi(\eta)=\theta$.

Now it is straight forward to show that
\[\Psi\circ\Phi_{D,D'}\simeq \id_{\Map((\A,\Li),(\A,\Li'))}\] and
\[\Phi_{D,D'}\circ \Psi\simeq \id_{\Map(S_{(\A,\Li)},S_{(\A',\Li')})}\,.\]
We leave this to the reader.
\end{proof}

\begin{ex}\label{ex:cy}
The following is an example where the building datum from a stacky cover is not determined on the global sections of $\A$.
Let $\xi$ be a primitive $5$th root of unity and consider the ring of integers $\ZZ[\xi]$ of the cyclotomic extension $\QQ[\xi]$. The Galois group $G=(\ZZ/5\ZZ)^\times$ acts by
\[\begin{split}G\times \ZZ[\xi] & \to \ZZ[\xi]
\\ (g,\xi) & \mapsto \xi^{g^{-1}}
\end{split}\]
and the corresponding coaction is
\[\begin{split}\ZZ[\xi] & \to \ZZ[\xi]\otimes \ZZ[G]
\\ \xi & \mapsto \sum_{n=1}^4\xi^{n^{-1}}e_n
\end{split}\]
where $e_n$ is the standard basis of idempotents.

If we invert 2 and add a square root $i$ of $-1$ we get an \'etale morphism $\spec \ZZ[2^{-1},i]\to\spec\ZZ$ and we have an isomorphism of Hopf algebras
\[\begin{split}
  \ZZ[2^{-1},i][G] & \to \ZZ[2^{-1},i][T]/(T^4-1) \\
  e_n & \mapsto \frac{1}{4}\sum_{j=0}^3i^{-j\varphi(n)}T^j
\end{split}
\]
where $\varphi$ is the inverse of $\ZZ/4\ZZ\to (\ZZ/5\ZZ)^\times ; \ a\mapsto 2^a$.
Via this isomorphism we get a coaction given on the generator $\xi$ by
\[\begin{split}\ZZ[2^{-1},i][\xi] & \to \ZZ[2^{-1},i][\xi]\otimes \ZZ[2^{-1},i][T]/(T^4-1)
\\ \xi & \mapsto \frac{1}{4}\sum_{n=1}^4\sum_{j=0}^3\xi^{n^{-1}}i^{-j\varphi(n)}T^j\,.
\end{split}\]
After some calculations, one concludes that the induced splitting is
\[
\ZZ[2^{-1},i,\xi]=\ZZ[2^{-1},i]\oplus\langle d_1\rangle\oplus\langle d_2\rangle\oplus\langle d_3\rangle\,,
\]
where
\[
\begin{split}
  d_1 & = (\xi-\xi^4)+(\xi^2-\xi^3)i\,, \\
  d_2 & = (\xi+\xi^4)-(\xi^2+\xi^3)=\sqrt{5}\,, \\
  d_3 & = (\xi-\xi^4)-(\xi^2-\xi^3)i\,, \\
\end{split}
\]
and the multiplication is given by the global sections
\[s_{1,1}=-1-2i\,, \quad s_{1,2}=1+2i\,, \quad s_{1,3}=5\,, \quad s_{2,2}=5\,, \quad s_{2,3}=1-2i\,, \quad s_{3,3}=-1+2i\,.\]
Hence we have a building datum over the \'etale chart $\spec \ZZ[2^{-1},i]\to\spec\ZZ$.

Over the open chart $\spec \ZZ[5^{-1}]\to\spec\ZZ$ we have a trivial building datum. Note that all the sections $s_{i,j}$ above become invertible when inverting $5$. One may check that the two building data glue to a global building datum $(\A,\Li)$ such that $[\spec\ZZ[\xi]/(\ZZ/5\ZZ)^\times]\simeq S_{(\A,\Li)}$.
\end{ex}

\section{Parabolic sheaves and an application}\label{sec:application}
In Section \ref{sec:DF-from-cov} we saw that a $D(A)$-ramified cover gives rise to a Deligne--Faltings datum which we may think of as \emph{ramification data}. In \cite{Biswas-Borne} Biswas--Borne consider ramification data $(\overline{D},\overline{r})$ where $\overline{D}=({D_i})_{i\in I}$ is a
simple normal crossings divisor
on a scheme $S$ over a field $k$, and $\overline{r}=({r_i})_{i\in I}$ is a family of positive integers. Then they give a criterion \cite[Theorem 2.8]{Biswas-Borne} for when this birational building datum comes from a
\emph{tamely ramified $G$-torsor} \cite[Definition 2.2]{Biswas-Borne} where $G$ is a finite abelian group scheme over $k$.
Following \cite[Remark 2.9.(2)]{Biswas-Borne}, we extend this result to the more general setting where
\begin{enumerate}
  \item the ramification datum $(\overline{D},\overline{r})$ is replaced with a \emph{birational building datum} $(\A,\Li)$, i.e., all global sections of $\Li$ are regular, and
  \item \emph{tamely ramified torsor} is replaced with a \emph{tamely ramified cover} (Definition \ref{df:ram-cov}).
\end{enumerate}

\subsection*{Parabolic sheaves} Before stating the theorem, we recall the notion of \emph{parabolic sheaf} in \cite{Borne-Vistoli}.
First we need to define the \emph{category of weights} of a monoid.

\begin{df}
Let $P$ be an integral monoid. We denote by $P^{wt}$ the partially ordered set which is the strict symmetric monoidal category with objects that are elements of $P^{gp}$ and whose arrows $p\colon p'\to p''$ are elements $p\in P$ such that $p'+p=p''$. Similarly, when $\PP$ is a fine sheaf of monoids, we denote by $\PP^{wt}$ the corresponding symmetric monoidal stack.
\end{df}

\begin{rk}
Every symmetric monoidal functor $\Li\colon \PP\to \DIV_{S_\et}$ gives rise to a symmetric monoidal functor $\Li^{wt}\colon \PP^{wt}\to \PIC_{S_{\et}}$ and vice versa (see \cite[Proposition 5.4 and Remark 5.5]{Borne-Vistoli}).
\end{rk}

\begin{df}\label{df:par-sheaf}
Let $(\PP,\Q,\Li)$ be a Deligne--Faltings datum on a scheme $S$. Consider the actions \[\begin{split}\Sigma\colon \PP^{wt}\times \Q^{wt} & \to \Q^{wt}\,, \\ T\colon \PIC_{S_\et}\times\QCOH_{S_\et} & \to \QCOH_{S_\et}   \end{split}\]
where the first one is given by addition and the second by taking tensor products (this means that $\Q^{wt}$ and $\QCOH_{S_\et}$ are module categories over $\PP^{wt}$ and $\PIC_{S_\et}$ respectively).
A \emph{parabolic sheaf} $(E,\rho)$ on $(S,\PP,\Q,\Li)$ consists of
\begin{enumerate}
\item a cartesian functor \[E\colon \Q^{wt}\to \QCOH_{S_\et}\,,\] which we write on objects as $q\mapsto E_q$ and $q'\mapsto E(q')$ on arrows, and
\item an isomorphism 
  \[
    \rho\colon E\circ\Sigma\simeq T\circ (\Li^{wt}\times E)
  \]
realizing $E$ as an $\Li^{wt}$-equivariant functor. 
\end{enumerate}

\begin{rk}
For a more explicit definition of a parabolic sheaf, see \cite[Definition 5.6]{Borne-Vistoli}.
\end{rk}

We denote by $\PAR(S,\PP,\Q,\Li)$ the category of parabolic sheaves on $S$ with respect to the Deligne--Faltings datum $(\PP,\Q,\Li)$.
\end{df}

Let $\pi\colon \X\to S$ be the root stack of a Deligne--Faltings datum $(P,Q,\Li)$, with $P$ and $Q$ constant, and let \[\Ei\colon Q^{wt}\to \PIC \X\] be the universal Deligne--Faltings structure on $\X$. Let $F$ be a quasi-coherent sheaf on $\X$. The corresponding parabolic sheaf $(E,\rho)$ is given as follows:
\begin{enumerate}
\item For $q\in Q^{wt}$, put
\begin{equation}\label{eq:para}E_q=\pi_*(F\otimes\Ei_q)\end{equation}
and for $q'\in Q$, we let $E(q')\colon E_q\to E_{q+q'}$ be the pushforward of the morphism \[F\otimes \Ei_q\simeq F\otimes \Ei_q\otimes \oj_X\xrightarrow{\id\otimes \Ei(q')}F\otimes \Ei_q\otimes\Ei_{q'}\simeq F\otimes \Ei_{q+q'}\,.\]
\item For $p\in P^{wt}$, $q\in Q^{wt}$, we let \[\rho_{p,q}\colon E_{q+p}\simeq \Li_p\otimes E_q\] be the isomorphism obtained via the projection formula \[\Li_p\otimes \pi_*(F\otimes \Ei_q)\simeq \pi_*(F\otimes \Ei_q\otimes \pi^*\Li_p)\] and the isomorphism $\Ei_q\otimes \Ei_p\simeq \Ei_{q+p}$.
\end{enumerate}
This construction may be globalized and one has the following theorem:

\begin{thm}[{{\cite[Theorem 6.1]{Borne-Vistoli}}}]\label{thm:parabolic}
Let $(S,\PP,\Q,\Li)$ be a Deligne--Faltings datum where $\Li\colon \PP\to \DIV_{S_\et}$. There is an equivalence of symmetric monoidal categories \[\PAR(S,\PP,\Q,\Li)\simeq \QCOH(S_{\PP,\Q,\Li})\,.\]
\end{thm}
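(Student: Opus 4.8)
The plan is to establish the equivalence through the explicit functor constructed just above the statement, sending a quasi-coherent sheaf $F$ on $\X=S_{\PP,\Q,\Li}$ to the parabolic sheaf $(E,\rho)$ with $E_q=\pi_*(F\otimes\Ei_q)$, together with the transition maps $E(q')$ and the projection-formula isomorphism $\rho_{p,q}\colon E_{q+p}\simeq\Li_p\otimes E_q$, and then to produce a quasi-inverse. Since both sides are categories of sheaf-theoretic data over $S$, I would first reduce to a local situation by étale descent: parabolic sheaves are defined on the small étale site of $S$, quasi-coherent sheaves on an algebraic stack satisfy descent, and because $\X\to S$ is tame the pushforward $\pi_*$ is exact and commutes with flat (in particular étale) base change, so the assignment $F\mapsto(E,\rho)$ is compatible with restriction along an étale cover. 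Hence it suffices to treat the case where $\PP\to\Q$ admits a global chart $P\to Q$ by fine sharp monoids, so that $\X=S_{P,Q,\Li}$.

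In the chart case I would invoke the description of the root stack used in the flatness proposition, namely $S_{P,Q,\Li}\cong S\times_{[\spec\ZZ[P]/D(P)]}[\spec\ZZ[Q]/D(Q)]$, and reduce further by base change to the universal situation $S=[\spec\ZZ[P]/D(P)]$. The heart of the argument is then the standard dictionary identifying $\QCOH([\spec\ZZ[Q]/D(Q^{gp})])$ with the category of $Q^{gp}$-graded $\ZZ[Q]$-modules. Such a graded module is precisely an abelian group $M_q$ for each weight $q\in Q^{gp}$ together with, for each $q'\in Q$, a map $M_q\to M_{q+q'}$ compatible with addition, i.e.\ a cartesian functor $Q^{wt}\to\Modu$, which is exactly the functor $E$ in the definition of a parabolic sheaf. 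Under this dictionary the $P$-equivariance encoded by $\rho_{p,q}$ matches the descent of the $P^{gp}$-weight to line bundles pulled back from $S$, and the unit and associativity axioms for $(E,\rho)$ translate into the module axioms.

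The quasi-inverse then sends $(E,\rho)$ to the quasi-coherent sheaf on $\X$ whose weight-$q$ graded piece is $E_q$; fully faithfulness of $F\mapsto(E,\rho)$ follows from the projection formula together with the fact that, in each weight, $\pi_*(F\otimes\Ei_q)$ recovers the corresponding graded piece of $F$, while essential surjectivity is the reconstruction of a graded module from its pieces. Finally I would check symmetric monoidality: the monoidal structure on $\Ei^{wt}\colon\Q^{wt}\to\PIC_{S_\et}$ and the projection formula yield a natural isomorphism $\pi_*((F\otimes F')\otimes\Ei_q)$ matching the convolution tensor product of parabolic sheaves, so the equivalence respects $\otimes$.

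I expect the main obstacle to be the reconstruction step and the verification of fully faithfulness: one must show that the cartesian functor $E$ together with $\rho$ carries exactly the information of a genuine quasi-coherent sheaf on $\X$, no more and no less. This rests on the Kummer hypothesis, which forces $A=Q/P$ to be finite so that $P^{gp}$ has finite index in $Q^{gp}$ and the weight category $Q^{wt}$ is controlled by finitely many cosets, and on tameness of $\X$, which guarantees that $\pi_*$ is exact and compatible with base change so that the local equivalences glue in the first step.
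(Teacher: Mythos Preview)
The paper does not prove this theorem: it is stated with the attribution \cite[Theorem 6.1]{Borne-Vistoli} and no proof is given. The paper only records, just before the statement, the explicit description of the functor $F\mapsto(E,\rho)$ with $E_q=\pi_*(F\otimes\Ei_q)$ in the constant-chart case, and then invokes Borne--Vistoli for the equivalence. So there is no in-paper proof to compare your proposal against.

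That said, your outline is a fair summary of how the Borne--Vistoli argument actually proceeds: \'etale-local reduction to a chart $P\to Q$, passage to the universal base $[\spec\ZZ[P]/D(P^{gp})]$, and identification of quasi-coherent sheaves on $[\spec\ZZ[Q]/D(Q^{gp})]$ with $Q^{gp}$-graded $\ZZ[Q]$-modules, i.e.\ functors $Q^{wt}\to\Modu$. One point to be careful about: your invocation of tameness and of $\pi_*$ being exact is not part of the hypotheses of the cited theorem (Borne--Vistoli work with general coherent Deligne--Faltings data and Kummer $\PP\to\Q$, without assuming flatness of the root stack), so the gluing step should not rely on exactness of $\pi_*$ but rather on the fact that both sides are stacks in the \'etale topology and the functor is visibly local. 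Otherwise your sketch matches the structure of the original proof.
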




\subsection*{Ramified $G$-covers}

\begin{prop}
Let $k$ be a field and $S$ a $k$-scheme. Let $\pi\colon \X\to S$ be a stacky cover and $X\to \X$ a $G_\X$-torsor where $X$ is a scheme and $G$ is a finite abelian group scheme over $k$.
Then for every point $s\in S$ there is
\begin{enumerate}
  \item a field extension $k'\supseteq k$,
  \item an abelian group $A$, a monomorphism $D_{k'}(A)\to G_{k'}$,
  \item an fppf neighborhood $V\to S$ of $s$,
  \item a $D_V(A)$-cover $Y\to V$, and
  \item a $G$-equivariant isomorphism $Y\times^{D(A)}G\cong V\times_SX$.
\end{enumerate}
\end{prop}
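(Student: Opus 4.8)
The plan is to reduce the structural statement to the local picture already established for stacky covers and to the representability theory of diagonalizable subgroups. First I would fix a point $s \in S$ and pass to the stabilizer. Since $X \to \X$ is a $G_\X$-torsor and $\X \to S$ is a stacky cover, the stabilizer group $\stab(\bar{s})$ at a geometric point lying over $s$ is a finite diagonalizable group scheme $D(A)$ for some finite abelian group $A$ (this is part of the definition of a stacky cover, and was used in Remark \ref{rk:local-structure}). The torsor structure gives, over a geometric point, a monomorphism $\stab(\bar{s}) = D(A) \hookrightarrow G$, since the stabilizer of $\X$ embeds into the structure group of any torsor presenting it. The first task is to spread this monomorphism out: by choosing a field extension $k' \supseteq k$ over which both $A$ and the relevant characters of $G$ are defined, one obtains a closed immersion $D_{k'}(A) \to G_{k'}$ of group schemes over $k'$. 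Here I would use that $D_{k'}(A)$ is diagonalizable and that a homomorphism out of a diagonalizable group is determined by the induced map on character groups, so that the geometric-fiber monomorphism is the base change of an honest monomorphism after enlarging $k$.

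Next I would produce the fppf neighborhood and the $D(A)$-cover. By the local structure theorem for tame stacks (as recalled in Remark \ref{rk:local-structure}), there is an étale neighborhood $U \to S$ of $s$ and a ramified $D(A)$-cover $Z \to U$ with $\X \times_S U \simeq [Z/D(A)]$. After a further fppf base change $V \to U$ (to trivialize the relevant torsor and realize the embedding $D_V(A) \hookrightarrow G_V$ over the base rather than only at a point), I would set $Y = Z \times_U V$, which is a $D_V(A)$-cover of $V$ by the stability of covers under base change implicit in Definition \ref{def:cover}. This gives items (1)--(4) of the statement.

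The heart of the matter is item (5), the $G$-equivariant isomorphism $Y \times^{D(A)} G \cong V \times_S X$. The key idea is that both sides are $G_V$-torsors over $\X_V := \X \times_S V \simeq [Y/D_V(A)]$, and a $G$-torsor is determined by the data of its associated $D(A)$-reduction. Concretely, $V \times_S X$ is the pullback of the given $G_\X$-torsor, and over $[Y/D_V(A)]$ the canonical $D_V(A)$-torsor $Y \to \X_V$ induces, via the embedding $D_V(A) \hookrightarrow G_V$, the associated $G_V$-torsor $Y \times^{D(A)} G$. I would check that the reduction of structure group of $V \times_S X$ along $D_V(A) \hookrightarrow G_V$ recovers exactly $Y$ as a $D_V(A)$-torsor over $\X_V$; this is where the torsor presentation $X \to \X$ must be compatible with the chosen local quotient description, which I can arrange by shrinking $V$ further. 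Pushing the resulting isomorphism of torsors over $\X_V$ down along $\X_V \to V$ and using that extension of structure group commutes with this descent yields the desired $G$-equivariant isomorphism over $V$.

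The main obstacle will be the compatibility in the last step: matching the abstractly-obtained local $D(A)$-cover $Y$ (coming from the tame local structure theorem) with the reduction of the \emph{given} torsor $X \to \X$ along the character-theoretic embedding $D_{k'}(A) \hookrightarrow G_{k'}$. The two a priori unrelated sources of $D(A)$-structure---one intrinsic to $\X$, one coming from $G$---must be identified, and this identification is only available after the fppf base change $V \to S$ and a choice of trivialization, which is precisely why an fppf (rather than étale) neighborhood is needed in the statement. I expect this descent-and-matching argument, rather than the production of the embedding or the cover, to require the most care.
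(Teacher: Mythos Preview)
Your overall strategy matches the paper's: use the local structure theorem to present $\X_U\simeq[Y/D(A)]$, extract the monomorphism $D(A)\hookrightarrow G$ from the representability of $\X\to BG$ (equivalently, from the fact that stabilizers of $[X/G]$ are subgroups of $G$), and then compare the induced torsor $Y\times^{D(A)}G$ with the restriction of $X$. The paper phrases the monomorphism step slightly differently---it notes that $BH\to\X\to BG$ is representable and, after a field extension making this map pointed, is induced by a group homomorphism $H\to G$ which must then be a monomorphism---but this is the same content as your argument.

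The gap is in your treatment of item (5). You frame the comparison as checking that ``the reduction of structure group of $V\times_S X$ along $D_V(A)\hookrightarrow G_V$ recovers exactly $Y$''. But there is no a priori reason the $G$-torsor $X|_{\X_V}$ admits a $D(A)$-reduction at all; what you actually need is to compare two $G$-torsors over $\X_V$, namely $X|_{\X_V}$ and $Y\times^{D(A)}G$, and show they become isomorphic after an fppf base change. Your assertion that this ``can be arranged by shrinking $V$ further'' is the entire difficulty, not a formality: two $G$-torsors agreeing at a residual gerbe need not agree on any Zariski or \'etale neighbourhood, and some genuine input is required. The paper does not argue this directly either; it observes that the two classifying maps $\X_U\to BG$ agree at the residual gerbe $BH_{k'}\hookrightarrow\X_U$ by construction of the homomorphism $H\to G$, and then invokes \cite[Proposition~3.13]{Biswas-Borne} to conclude they agree on an fppf neighbourhood. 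That external result is the missing ingredient in your sketch, and you should either cite it or supply an argument (for finite $G$, this typically goes through the vanishing of infinitesimal obstructions or an explicit descent).
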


The proof will be very much along the same lines as the proof of \cite[Proposition 3.5]{Biswas-Borne}.

\begin{proof}
If $x\in \X$ is a closed point then there is an \'etale neighborhood $U$ of $\pi(x)$ such that $\X_U:=\X\times_SU\simeq [Y/H]$ where $H=D(A)$ (where $A$ is an abstract abelian group) and $D(A)_x$ is the stabilizer at $x$. We replace $\X$ by $\X_U$. Then we have $\mathcal{G}_x\simeq BH_{\kappa(\pi(x))}$ and $BH\to \X$ is a section of the morphism $\X\to BH$
corresponding to the $H$-torsor $Y\to \X$. The torsor $X\to \X$
corresponds to a representable morphism $\X\to BG$. After base change to a field extension of $\kappa(\pi(x))$ we may assume that the composition
$BH\to \X\to BG$ is induced by a group morphism
$H\to G$ which must be a monomorphism since
$BH\to BG$
is representable.
The composition \[\X\xrightarrow{Y}BH\to\X\xrightarrow{X} BG\] is the torsor $Y\times^{H}G$ which we may compare with $X$.
The rest of the proof follows from \cite[Proposition 3.13]{Biswas-Borne}.
\end{proof}

If $G$ is a finite abelian group scheme over $k$, there is a sequence
\[0\to G^0\to G\to G_\et\to 0\] where $G_\et$ is the spectrum of the largest separable subalgebra of $k[G]$ (hence \'etale). Considering the corresponding sequence for the dual $D(G)$ we get
\[0\to D(G)^0\to D(G)\to D(G)_\et\to 0\] and if we dualize we get
\[0\to D_G:=D(D(G)_\et)\to D(D(G))\cong G\to D(D(G)^0)\to 0\,.\]
The subgroup $D_G$ is the maximal diagonalizable subgroup and has the universal property that every morphism $D(A)\to G$ ($A/k$ \'etale) factors uniquely through $D_G\to G$.

Passing to the perfect closure $k\subseteq k'$ (\cite[\href{https://stacks.math.columbia.edu/tag/046W}{Tag 046W}]{stacks-project}) the sequences above splits canonically (see e.g. \cite[6.8]{Waterhouse}). Hence we have the following results:

\begin{prop}\label{prop:insep-cover}
Let $k$ be a field and $S$ a $k$-scheme. Let $\pi\colon \X\to S$ be a stacky cover and $X\to \X$ a $G$-torsor where $X$ is a scheme and $G$ is a finite abelian group scheme over $k$. Then
\begin{enumerate}
  \item there exists a purely inseparable field extension $k\subseteq k'$, a $D_{G}$-cover $X'\to S_{k'}$, and a $G$-equivariant $S_{k'}$-isomorphism
  $X'\times^{D_{G}}G\cong X_{k'}\,,$ and
  \item $\stab(X)\subseteq D_G\times X$ is a closed subgroup scheme.
\end{enumerate}
\end{prop}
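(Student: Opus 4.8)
The plan is to reduce to the diagonalizable situation already handled in the preceding Proposition and then to exploit the canonical splitting of the connected-\'etale type filtration over a perfect field. First I would take $k'$ to be the perfect closure of $k$, which is purely inseparable over $k$ (and equal to $k$ in characteristic $0$). By the cited splitting of the sequence $0\to D_G\to G\to D(D(G)^0)\to 0$ over $k'$ (see \cite[6.8]{Waterhouse}), the inclusion $D_{G,k'}\hookrightarrow G_{k'}$ admits a canonical retraction $\mathrm{pr}\colon G_{k'}\to D_{G,k'}$, and I would set $Q=D(D(G)^0)_{k'}$, so that $G_{k'}\cong D_{G,k'}\times Q$ canonically.

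Next I would construct the $D_G$-cover fppf-locally and glue. By the preceding Proposition, around each point of $S_{k'}$ there is an fppf neighborhood $V$, a finite abelian group $A$, a monomorphism $D(A)\to G$, and a $D_V(A)$-cover $Y\to V$ with a $G$-equivariant identification $Y\times^{D(A)}G\cong X_V$. Since $D(A)$ is diagonalizable, the universal property of the maximal diagonalizable subgroup forces $D(A)\to G$ to factor through $D_G$, so that $X'_V:=Y\times^{D(A)}D_G$ is a $D_G$-cover of $V$ (an induced cover in the sense of Definition \ref{df:ind-covers}) satisfying $X'_V\times^{D_G}G\cong X_V$. Because $X'_V$ is canonically $X_V\times^{G,\mathrm{pr}}D_G$, these local covers carry canonical transition data induced from those of $X$, and hence descend to a global $D_G$-cover $X'\to S_{k'}$; concretely $X'$ is the associated $D_G$-torsor $X_{k'}\times^{G,\mathrm{pr}}D_{G}$ viewed as a finite flat cover of $S_{k'}$, and it is a genuine $D_G$-cover because this may be checked fppf-locally, where it is $X'_V$.

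It remains to identify $X'\times^{D_G}G$ with $X_{k'}$, and this is where I expect the real difficulty to lie. The content is that the part of the torsor complementary to $D_G$ is trivial: the associated $Q$-torsor $X_{k'}\times^{G}Q\to \X_{k'}$ has inertia acting trivially, since the stabilizers of $\X$ are diagonalizable and therefore map into $D_G=\ker(G_{k'}\to Q)$; by tameness of $\X\to S$ together with \cite[Theorem 10.3]{Alper} this $Q$-torsor descends to $S_{k'}$, and inducing $X'$ back up to $G$ recovers $X_{k'}$ precisely when this descended $Q$-torsor is trivial. Establishing this triviality over the perfect closure is the main obstacle, and is exactly the point at which the splitting over $k'$ and the diagonalizability of the stabilizers must be used in tandem.

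Finally, for part (2) I would argue fppf-locally using the explicit description of stabilizers of ramified covers. Over each $V$ the cover $X'_V$ is a ramified $D(A)$-cover, and by Example \ref{ex:ram} its stabilizer group scheme is cut out inside $D_V(A)\times_V X'_V$ by an explicit Hopf ideal, hence is a closed subgroup scheme; the universal property of $D_G$ places these stabilizers inside $D_G\times X'$, and the canonical (hence compatible) transitions from the previous step glue them to a global closed subgroup scheme $\stab(X)\subseteq D_G\times X$.
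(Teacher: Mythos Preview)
Your route to $X'$ is considerably more elaborate than the paper's. The paper does not invoke the preceding Proposition or any fppf-local gluing at all: over the perfect closure it simply observes that the stabilizers of $\X$, being diagonalizable, land in $D_G$ by the universal property, so the complementary factor $G^\perp\cong Q$ acts \emph{freely} on $X_{k'}$, and then sets $X'=X_{k'}/G^\perp$ directly. Your local construction and gluing arrive at the same $X'$ (indeed you note $X'\cong X_{k'}\times^{G,\mathrm{pr}}D_G$, which is exactly $X_{k'}/Q$), but with unnecessary overhead.

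You correctly isolate the delicate step: the isomorphism $X'\times^{D_G}G\cong X_{k'}$ amounts to the triviality of the associated $Q$-torsor, and you do not prove it. The paper's proof is equally terse at this point --- after quotienting by $G^\perp$ it asserts ``Thus $X\to S$ is induced from a $D_G$-cover'' without further justification of the $Q$-part. So the obstacle you flag is genuine and is not addressed in the paper's argument either; your local isomorphisms $X'_V\times^{D_G}G\cong X_V$ come from non-canonical local trivializations of the $Q$-torsor and there is no reason they should glue.

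For part (2) the paper's approach is shorter and cleaner than yours: once (1) gives $\stab(X_{k'})\subseteq D_{G,k'}\times X_{k'}$ over the perfect closure, the containment $\stab(X)\subseteq D_G\times X$ over $k$ follows immediately by faithfully flat descent along $\spec k'\to\spec k$. Your hands-on argument via Example \ref{ex:ram} and gluing is workable but roundabout, and as written it concerns $\stab(X')$ rather than $\stab(X)$.
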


\begin{proof}
(1) By the argument above, after a purely inseparable field extension we may assume that $k$ is perfect and hence $G$ splits into two parts where one is the maximal diagonalizable subgroup $D_G$. Since $\X$ has diagonalizable stabilizers, we conclude that the non-diagonalizable part $G^\perp$ must act freely and hence we may quotient out by $G^\perp$.
 Thus $X\to S$ is induced from a $D_G$-cover.

 (2) From (1) we have that $\stab(X)\subseteq D_G\times X$ after a purely inseparable field extension. By faithfully flat descent we see that $\stab(X)\subseteq D_G\times X$ already over $k$.
\end{proof}


Proposition \ref{prop:insep-cover} motivates the following definition:

\begin{df}\label{df:ram-cov}
Let $G$ be a finite abelian group scheme over a field $k$ and let $S$ be a scheme over $k$. A \emph{tamely ramified $G$-cover} is a finite flat morphism $p\colon X\to S$ of finite presentation together with an action of $G$ on $X$ over $S$ such that
\begin{enumerate}
  \item $\stab(X)\subseteq D_G\times X$, where $D_G\subseteq G$ is the maximal diagonalizable subgroup, and
  \item there exists an fppf cover $\{U_i\to S\}$ and an isomorphism of $\oj_S[G]$-comodules \[(f_*\oj_X)|_{U_i}\cong \oj_{U_i}[G]\,,\] where the comodule structure on the right hand side is the regular representation.
\end{enumerate}
\end{df}



\begin{df}\label{df:ramification-datum}
A stacky building datum $(\A,\Li)$ on $S$ is called a \emph{birational building datum} if the image of $\Li$ is contained in the full subcategory of effective Cartier divisors, i.e., for every pair $(\Li_p,s_p)$ where $p\in P_\A(U)$ for some \'etale $U\to S$, we have that $s_p$ is a regular section.
\end{df}

\begin{rk}
Let $S$ be a reduced scheme. If $\pi\colon \X\to S$ is the root stack associated to a birational building datum, then the complement $j\colon U\subset \X$ of the ramification locus is \emph{scheme theoretically dense} (and topologically dense), that is, $\oj_\X\to j_*\oj_U$ is injective. In this cases $\pi|_U\colon \X_U\to U$ is an isomorphism and we say that $\X\to S$ is \emph{birational}.
\end{rk}

Let $p\colon X\to S$ be a tamely ramified $G$-cover which is generically a torsor. Then $[X/G]\to S$ is a birational stacky cover and hence a flat root stack corresponding to some birational building datum $(\A,\Li)$ where $\A\cong \Pic_{\X/S}$. This means that we may talk about the \emph{birational building datum associated to }$X$.

\begin{df}
We say that a tamely ramified $G$-cover $X\to S$ \emph{has a birational building datum} $(\A,\Li)$ if $[X/G]\cong S_{(\A,\Li)}$.
\end{df}

\begin{rk}
If $p\colon X\to S$ is a ramified $G$-cover with birational building datum $(\A,\Li)$ then $\A\cong \Pic_{[X/G]/S}$.
\end{rk}

\subsection*{Existence of ramified $G$-covers}

\begin{lem}\label{lem:inflex}
Let $S$ be a scheme proper over a field $k$. If $\pi\colon \X\to S$ is a birational stacky cover, then $\X$ is geometrically reduced and geometrically connected if and only if $S$ is geometrically reduced and geometrically connected.
\end{lem}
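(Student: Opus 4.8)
The plan is to reduce the whole statement to the algebraically closed field $\bar{k}$ and then to treat connectedness and reducedness by separate, fairly soft arguments. First I would fix an algebraic closure $\bar{k}$ of $k$ and set $\bar{\pi}\colon \X_{\bar{k}}\to S_{\bar{k}}$ to be the base change of $\pi$ along $\spec\bar{k}\to \spec k$. Since all the defining conditions of a stacky cover (flat, proper, of finite presentation; coarse moduli space; tameness; and condition $(3)$) are stable under the base change to $\bar{k}$, the morphism $\bar{\pi}$ is again a stacky cover; in particular it is flat and proper, it is a coarse moduli space, and the canonical map $\oj_{S_{\bar{k}}}\to \bar{\pi}_*\oj_{\X_{\bar{k}}}$ is an isomorphism. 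Here I would invoke tameness: $\pi_*$ is exact \cite{Alper}, so the formation of $\pi_*\oj_\X$ commutes with the flat base change $\spec\bar{k}\to \spec k$, and combined with the coarse-space identity $\oj_S\xrightarrow{\sim}\pi_*\oj_\X$ this gives $\oj_{S_{\bar{k}}}\xrightarrow{\sim}\bar{\pi}_*\oj_{\X_{\bar{k}}}$. By definition $S$ (resp.\ $\X$) is geometrically reduced and geometrically connected if and only if $S_{\bar{k}}$ (resp.\ $\X_{\bar{k}}$) is reduced and connected, so it suffices to show $\X_{\bar{k}}$ is reduced, resp.\ connected, if and only if $S_{\bar{k}}$ is.

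For connectedness I would use that for a tame stack the coarse moduli morphism is a homeomorphism on underlying topological spaces and that its formation commutes with base change \cite{Abramovich--Olsson--Vistoli}; hence $|\X_{\bar{k}}|\to|S_{\bar{k}}|$ is a homeomorphism and $\X_{\bar{k}}$ is connected exactly when $S_{\bar{k}}$ is. (Alternatively one argues with global functions, since $\Gamma(\X_{\bar{k}},\oj)=\Gamma(S_{\bar{k}},\bar{\pi}_*\oj)=\Gamma(S_{\bar{k}},\oj)$ forces the two to have the same idempotents.) For the ``only if'' half of reducedness I would argue: if $\X_{\bar{k}}$ is reduced then $\oj_{\X_{\bar{k}}}$ has no nilpotents, and for every open $W\subseteq S_{\bar{k}}$ the ring $\Gamma(W,\bar{\pi}_*\oj_{\X_{\bar{k}}})=\Gamma(\bar{\pi}^{-1}(W),\oj_{\X_{\bar{k}}})$ is reduced; thus $\oj_{S_{\bar{k}}}=\bar{\pi}_*\oj_{\X_{\bar{k}}}$ is a sheaf of reduced rings and $S_{\bar{k}}$ is reduced.

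The remaining implication, that $S_{\bar{k}}$ reduced forces $\X_{\bar{k}}$ reduced, is where flatness enters fibrewise. I would pick a smooth surjective atlas $V\to\X_{\bar{k}}$ with $V$ a scheme, so $\X_{\bar{k}}$ is reduced iff $V$ is, and note that $V\to\X_{\bar{k}}\xrightarrow{\bar{\pi}}S_{\bar{k}}$ is flat. Its fibre over $s\in S_{\bar{k}}$ is smooth over the residual gerbe $(\X_{\bar{k}})_s$, which — $S_{\bar{k}}$ being the coarse space — is a gerbe over $\kappa(s)$ banded by a finite diagonalizable group $D(A)$, becoming $BD(A)$ after base change to $\overline{\kappa(s)}$. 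The crucial observation is that $BD(A)$ is \emph{geometrically reduced} even when $\operatorname{char}k$ divides $|A|$ and $D(A)$ is non-reduced as a scheme: writing $A$ as a quotient $\ZZ^r\twoheadrightarrow A$ with full-rank kernel $L$ yields an exact sequence $0\to D(A)\to \Gm_m^r\xrightarrow{\phi}\Gm_m^r\to 0$, whence $BD(A)\simeq[\Gm_m^r/\Gm_m^r]$ with the action through $\phi$, and the tautological map $\Gm_m^r\to BD(A)$ is a smooth atlas by the geometrically reduced scheme $\Gm_m^r$. Consequently the fibres of $V\to S_{\bar{k}}$ are geometrically reduced, and the fibrewise criterion for reducedness along a flat morphism \cite{stacks-project} lets me conclude that $V$, hence $\X_{\bar{k}}$, is reduced.

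The hard part will be precisely this last geometric-reducedness check for $BD(A)$ in bad characteristic, since $D(A)$ itself can be non-reduced (e.g.\ $\mu_p$); the presentation $BD(A)\simeq[\Gm_m^r/\Gm_m^r]$ is what supplies the needed smooth reduced atlas. The only other points requiring care are the base-change compatibilities of the coarse moduli space and of $\pi_*\oj_\X$, both of which I would derive from tameness. I should also remark that birationality of the stacky cover is in fact not used anywhere in this argument; it is kept in the hypotheses only because this is the generality in which the lemma is subsequently applied.
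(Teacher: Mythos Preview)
Your argument for geometric connectedness and for the direction ``$\X$ reduced $\Rightarrow$ $S$ reduced'' is fine and matches the paper. The gap is in the converse for reducedness, and it is fatal: the fibre $(\X_{\bar k})_s=\X_{\bar k}\times_{S_{\bar k}}\spec\kappa(s)$ is \emph{not} the residual gerbe and is \emph{not} in general a gerbe at all. Concretely, take the $2$nd root stack of the divisor $\{x=0\}$ on $S=\spec k[x]$ (char $k\neq 2$): then $\X=[\spec k[t]/\mu_2]$ with $x=t^2$, and the fibre over $x=0$ is $[\spec k[t]/(t^2)\,/\,\mu_2]$, whose smooth atlas $\spec k[t]/(t^2)$ is non-reduced. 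Thus the fibres of $V\to S_{\bar k}$ are not geometrically reduced and the fibrewise criterion does not apply. Your presentation $BD(A)\simeq[\Gm_m^r/\Gm_m^r]$ is correct but irrelevant, since what sits over $s$ is not $BD(A)$ but a possibly thickened quotient $[X_s/D(A)]$.

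More tellingly, your closing remark that birationality is unused should have been a warning sign: the lemma is \emph{false} without it. Take $S=\spec k$ and the $\mu_2$-cover $X=\spec k[t]/(t^2)$ with $s_{1,1}=0$; then $[X/\mu_2]\to\spec k$ is a stacky cover (one checks directly that $\pi_*$ sends both line bundles $\oj_\X$ and $\oj_\X[1]$ to rank-one modules), $S$ is reduced, but $\X$ is not. The paper's proof uses birationality in an essential way: the regular sections make the complement $U$ of the branch locus dense in $S$, flatness of $\pi$ makes $\pi^{-1}(U)$ \emph{scheme-theoretically} dense in $\X$, and since $\pi|_U$ is an isomorphism, reducedness of $S$ gives reducedness of $\pi^{-1}(U)$ and hence of $\X$. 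You should replace your fibrewise argument with this density argument.
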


\begin{proof}
Since $\pi$ is a universal homeomorphism, $\X$ is geometrically connected if and only if $S$ is geometrically connected.
Furthermore, since $\pi$ is flat, we have that $\pi^{-1}(U)$ is scheme-theoretically dense in $\X$ and since $\pi|_U$ is an isomorphism we get that $\pi^{-1}(U)$ is reduced. The same argument work after any field extension. This completes the proof.
\end{proof}

\begin{df}[Borne--Vistoli]An object of a $k$-linear rigid tensor category $V$ is called
  \begin{itemize}
  \item \emph{finite} if there are two distinct polynomials $f,g\in \N[x]$ such that $f(V)\cong g(V)$ where we interpret sum as direct sum and power as tensor power, and
  \item \emph{essentially finite} if it is the kernel of a morphism of finite vector bundles.
  \end{itemize}
  \end{df}

The following is a direct consequence of \cite[Proposition 3.19]{Biswas-Borne} and Lemma \ref{lem:inflex}:

\begin{lem}
Let $S$ be a geometrically connected and geometrically reduced scheme proper over a field $k$. If $\X\to S$ is a birational stacky cover then there exists a finite abelian group scheme $G$ and a $G$-torsor $X\to \X$, where $X$ is a scheme, if and only if for any closed point $x$ in $\X$, every representation of the residual gerbe $\mathcal{G}_x$ is a quotient of a subbundle of the restriction of an essentially finite, basic vector bundle on $\X$ along $\mathcal{G}_x\to \X$.
\end{lem}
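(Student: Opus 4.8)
The plan is to deduce the equivalence directly from \cite[Proposition 3.19]{Biswas-Borne} applied to the stack $\X$ in place of the base, once I have checked that $\X$ inherits from $S$ exactly the hypotheses that proposition requires.

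First I would verify that $\X$ is a proper, geometrically connected, and geometrically reduced algebraic stack over $k$. Properness is immediate: the structure morphism $\pi\colon \X\to S$ is proper, being a stacky cover (Definition \ref{df:st-cov}(1)), and $S\to \spec k$ is proper by assumption, so the composite $\X\to \spec k$ is proper. For the two geometric conditions I would invoke Lemma \ref{lem:inflex}: since $\pi$ is a \emph{birational} stacky cover and $S$ is geometrically connected and geometrically reduced, the lemma gives that $\X$ is geometrically connected and geometrically reduced as well. Finally, $\X$ is tame with finite diagonalizable stabilizers by Definition \ref{df:st-cov}, so each residual gerbe $\mathcal{G}_x$ is a gerbe banded by a diagonalizable group scheme and the essentially finite vector bundle formalism of \cite{Borne-Vistoli} applies to $\X$ just as to a scheme.

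With all of these hypotheses in force, I would apply \cite[Proposition 3.19]{Biswas-Borne} with $\X$ as the ambient object. That proposition asserts precisely that a finite abelian group scheme $G$ together with a $G$-torsor $X\to \X$ where $X$ is a scheme exists if and only if, at every closed point $x\in \X$, each representation of the residual gerbe $\mathcal{G}_x$ arises as a quotient of a subbundle of the restriction along $\mathcal{G}_x\to \X$ of an essentially finite, basic vector bundle on $\X$. This is verbatim the stated equivalence, so no further argument is required beyond the translation of terms.

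The only genuine obstacle is bookkeeping: \cite[Proposition 3.19]{Biswas-Borne} must be read as a statement about the stack $\X$ rather than about a scheme, so one must be certain that each of its hypotheses has been checked for $\X$ itself and not merely for $S$. Properness and the two geometric conditions are handled above via Lemma \ref{lem:inflex}, and the tameness of $\X$ guarantees that the Tannakian and essentially finite vector bundle machinery underlying the proposition carries over unchanged; once this translation is made the lemma follows immediately.
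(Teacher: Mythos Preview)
Your proposal is correct and matches the paper's approach exactly: the paper simply states that the lemma is a direct consequence of \cite[Proposition 3.19]{Biswas-Borne} together with Lemma \ref{lem:inflex}, and your write-up spells out precisely this reduction (properness of $\X$ over $k$, plus Lemma \ref{lem:inflex} for the geometric conditions, then invoke the cited proposition).
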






When $S$ is geometrically connected and geometrically reduced, \cite[Theorem 7.9]{Borne--Vistoli-Fund} and Lemma \ref{lem:inflex} implies that the category $\EFVect(\X)$ of essentially finite vector bundles on $\X$ is tannakian. When $V$ is an object of a tannakian category we write $\langle V\rangle$ for the tannakian subcategory generated by $V$, i.e., every object of $\langle V\rangle$ is a subquotient of $p(V,V^\vee)$ for some $p(x,y)\in \N[x,y]$.

\begin{df}[{\cite[D\'efinition 6.1]{Deligne-Le-Groupe}}]
Let $\C$ be a tannakian category. The \emph{fundamental group} $\pi(\C)$ of $\C$ is an ind Hopf algebra in $\C$ (considered as an object in Ind$(\C)^{\op}$) such that for any fiber functor $\omega\colon \C\to \Vect k'$ over an extension $k'\supseteq k$, we have \[\spec \omega(\pi(\C))\cong \Aut(\omega)\] in a functorial way. If $V\in \C$ then $\pi(V):=\pi(\langle V\rangle)$.
\end{df}

\begin{df}[{\cite[Definition A.12, A.14]{Biswas-Borne}}]
A tannakian category is called \emph{basic} if there is an affine group scheme $G$ over $k$ such that $G\otimes_k\mathbbm{1}\cong \pi(\C)$. We say that $V\in \C$ is basic if $\langle V\rangle$ is basic (see also \cite[A.15]{Biswas-Borne}).
\end{df}

Now we are ready to state the main theorem of this section.

\begin{thm}\label{thm:appl-main}
Let $S$ be a scheme proper over a field $k$ and assume that $S$ is geometrically connected and geometrically reduced. Let $(\A,\Li)$ be a birational building datum and $(P_\A,Q_\A,\Li)$ the associated Deligne--Faltings datum.
Then the following are equivalent:
\begin{enumerate}
  \item There exists a finite abelian group scheme $G$ over $k$ and a ramified $G$-cover $X\to S$ with birational building datum $(\A,\Li)$;
  \item For every geometric point $\bar{s}$ in the branch locus, we have that
  \begin{enumerate}
    \item[(i)] the map $\Gamma(S,\A)\to \A_{\bar{s}}$ is surjective, and
    \item[(ii)] for every $\lambda\in \A_{\bar{s}}$, there exists an essentially finite, basic, parabolic vector bundle $(E,\rho)$ on $(S,P_\A,Q_\A,\Li)$ such that the morphism
    \[\bigoplus_{\lambda'}E_{e_\lambda-e_{\lambda'}}|_{\bar{s}}\xrightarrow{(E(e_{\lambda'})|_{\bar{s}})_{\lambda'}} E_{e_\lambda}|_{\bar{s}}\] is not surjective, where the direct sum is over all $\lambda'\in \Gamma(S,\A)$ such that $\lambda'_{\bar{s}}\neq 0$.
  \end{enumerate}
\end{enumerate}
\end{thm}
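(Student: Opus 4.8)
The plan is to pass through the root-stack description and reduce the statement to the tannakian criterion on residual gerbes recalled above. By Theorem \ref{thm:main} and Theorem \ref{thm:build}, giving a ramified $G$-cover $X\to S$ with birational building datum $(\A,\Li)$ is the same as giving a $G$-torsor $X\to\X$ with $X$ a scheme, where $\pi\colon\X=S_{(\A,\Li)}\to S$ is the associated root stack; since $(\A,\Li)$ is birational, $\X\to S$ is a birational stacky cover and $\A\cong\Pic_{\X/S}$ by Proposition \ref{prop:build-st-cov}. Hence, by the criterion recalled above (a consequence of \cite[Proposition 3.19]{Biswas-Borne} and Lemma \ref{lem:inflex}), existence of such a $G$ and $X$ is equivalent to the condition that for every closed point $x\in\X$ every representation of the residual gerbe $\mathcal{G}_x$ is a quotient of a subbundle of the restriction along $\mathcal{G}_x\to\X$ of an essentially finite, basic vector bundle on $\X$. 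The whole proof then consists of translating this gerbe-theoretic condition into conditions (i) and (ii).

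The translation rests on two dictionaries. First, since $\X$ is a stacky cover, Remark \ref{rk:local-structure} gives $\mathcal{G}_x\cong BD(A)_\kappa$ with $A=\A_{\bar{s}}$ for $\bar{s}=\pi(x)$, so the irreducible representations of $\mathcal{G}_x$ are exactly the characters $\lambda\in\A_{\bar{s}}$, and a bundle $V$ on $\X$ restricts on $\mathcal{G}_x$ to the isotypic decomposition of the fibre $V|_x$ as a $D(A)$-representation; because $BD(A)$ is semisimple, $\lambda$ is a quotient of a subbundle of $V|_{\mathcal{G}_x}$ if and only if $\lambda$ occurs in $V|_x$. Second, Theorem \ref{thm:parabolic} identifies essentially finite basic vector bundles on $\X$ with essentially finite basic parabolic bundles $(E,\rho)$ on $(S,P_\A,Q_\A,\Li)$ (the equivalence is symmetric monoidal, so it respects essential finiteness and basicness); under it, by the formula $E_q=\pi_*(F\otimes\Ei_q)$, the fibre $V|_x$ and its isotypic pieces are read off from the values $E_{e_\lambda}|_{\bar{s}}$ and the weight-raising maps $E(e_{\lambda'})$. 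The content of condition (ii) is that the displayed map is non-surjective precisely when the character $\lambda$ occurs in $V|_x$ as a genuinely new constituent, i.e. one not already accounted for by the images coming from the other weights $\lambda'$ visible in $\Gamma(S,\A)$. I would establish this by a fibrewise computation, working in a strict henselization where $\X\cong[X/D(A)]$ and the parabolic bundle is an honest $A$-graded module, following \cite[Theorem 2.8]{Biswas-Borne} but keeping track of which weights are global.

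For the implication (1)$\Rightarrow$(2), the $G$-torsor $X\to\X$ furnishes a map $\X\to BG$: the regular representation of $G$ pulls back to an essentially finite basic bundle on $\X$, from which every character $\lambda\in\A_{\bar{s}}$ is cut out, giving (ii) via the fibrewise computation, while the characters of $G$ pull back to global line bundles whose classes in $\Pic_{\X/S}=\A$ surject onto $\A_{\bar{s}}$, giving (i). Conversely, for (2)$\Rightarrow$(1) I would, for each $x$ and each $\lambda$, take the bundle $V$ corresponding to the parabolic bundle provided by (ii) and use the non-surjectivity together with (i) to conclude that every character of $\mathcal{G}_x$ is obtained as a quotient of a subbundle of the restriction of some essentially finite basic bundle, which is exactly the gerbe-theoretic criterion; that criterion then produces $G$ and $X$.

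The main obstacle is the fibrewise dictionary of the second paragraph: making precise, in the non-constant setting where $\A$ is only an étale sheaf, that non-surjectivity of $\bigoplus_{\lambda'}E_{e_\lambda-e_{\lambda'}}|_{\bar{s}}\to E_{e_\lambda}|_{\bar{s}}$ corresponds to $\lambda$ appearing as a new constituent of $V|_x$. This is where condition (i) is genuinely needed and is the essential point separating this result from \cite{Biswas-Borne}: the direct sum in (ii) ranges only over $\lambda'$ admitting a global representative, so surjectivity of $\Gamma(S,\A)\to\A_{\bar{s}}$ is what guarantees that enough weights are available for the comparison to detect the correct characters, as illustrated by Example \ref{ex:cy} where the building datum is not determined on global sections.
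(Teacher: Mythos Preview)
Your outline is correct and follows essentially the same route as the paper: reduce to the tannakian criterion on residual gerbes (via \cite[Proposition 3.18/3.19]{Biswas-Borne} and Lemma~\ref{lem:inflex}), identify the irreducible representations of $\mathcal{G}_{\bar{s}}\cong BD(\A_{\bar{s}})$ with characters $\lambda\in\A_{\bar{s}}$, and then use Theorem~\ref{thm:parabolic} to translate the condition ``$\lambda$ occurs in $\F|_{\mathcal{G}_{\bar{s}}}$'' into the parabolic language of (ii).

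The one place where the paper is more direct than your sketch is the ``fibrewise dictionary'' you flag as the main obstacle. Rather than passing to a strict henselization, the paper works globally: it forms the closed substack $V(\J_{\bar{s}})\hookrightarrow\X$ cut out by the sections $\varepsilon_{\lambda'}$ for $\lambda'\in\Gamma(S,\A)$ with $\lambda'_{\bar{s}}\neq 0$, obtaining an exact sequence $\bigoplus_{\lambda'}\Ei_{\lambda'}^\vee\to\oj_\X\to h_*\oj_{V(\J_{\bar{s}})}\to 0$. Tensoring with $\F^\vee\otimes\Ei_\lambda$, applying the exact functor $\pi_*$ and the right-exact functor $\bar{s}^*$, and using the good-moduli-space base change $i^*p_*\simeq\hat\pi_*j^*$, one gets directly that $\Hom_{\mathcal{G}_{\bar{s}}}(\F|_{\mathcal{G}_{\bar{s}}},\Ei_\lambda|_{\mathcal{G}_{\bar{s}}})$ is the cokernel of the map displayed in (ii). Condition (i) is precisely what makes the square with $\mathcal{G}_{\bar{s}}$ and $V(\J_{\bar{s}})$ Cartesian over $\bar{s}$, so that this cokernel really computes the right $\Hom$; your diagnosis of the role of (i) is accurate. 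Your henselization approach would also work but is less clean here, since the global sections of $\A$ appearing in the index set of (ii) are not naturally visible after localization.
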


This proof is analogous to that in \cite{Biswas-Borne}.

\begin{proof}
Let $\pi\colon \X=S_{(\A,\Li)}\to S$ be the associated root stack and $\Ei\colon \pi^*Q_{\A}\to \DIV_{\X_\et}$ be the universal Deligne--Faltings object. Any vector bundle on $\mathcal{G}_{\bar{s}}$ is a direct sum of $\Ei_\lambda|_{\mathcal{G}_{\bar{s}}}$ for $\lambda\in \A_{\bar{s}}$. By Proposition \cite[Proposition 3.18]{Biswas-Borne}, (1) holds if and only if every vector bundle on $\mathcal{G}_{\bar{s}}\cong B\A_{\bar{s}}$ is a quotient of a subbundle of $\F|_{\mathcal{G}_{\bar{s}}}$,
for some essentially finite vector bundle $\F$ on $\X$. This implies that (1) holds if and only if, for every point $\bar{s}$ and every $\lambda\in \A_{\bar{s}}$, there exists an essentially finite vector bundle $\F$ on $\X$, such that $\Hom_{\mathcal{G}_{\bar{s}}}(\F|_{\mathcal{G}_{\bar{s}}},\Ei_\lambda|_{\mathcal{G}_{\bar{s}}})\neq 0$.
Let $\F$ be an essentially finite vector bundle on $\X$. We have a commutative diagram
\[\begin{tikzcd}\mathcal{G}_{\bar{s}}\ar{r}{g}\ar{d}{\hat{\pi}} & \X\ar{d}{\pi} \\ \spec k\ar{r}{\bar{s}} & S\end{tikzcd}\]
and isomorphisms
\[\begin{split}\Hom_{\oj_{\mathcal{G}_{\bar{s}}}}(\F|_{\mathcal{G}_{\bar{s}}},\Ei_\lambda|_{\mathcal{G}_{\bar{s}}}) & \simeq \hat{\pi}_*\HOM_{\oj_{\mathcal{G}_{\bar{s}}}}(\F|_{\mathcal{G}_{\bar{s}}},\Ei_\lambda|_{\mathcal{G}_{\bar{s}}}) \\ & \simeq \hat{\pi}_*g^*(\F^\vee\otimes \Ei_\lambda)\,.\end{split}\]
Let $E_{\F^\vee}$ be the parabolic vector bundle corresponding to $\F^\vee$ under the equivalence of Theorem \ref{thm:parabolic}.

Consider the ideals $J_{\bar{s}}\subseteq \oj_S$ and $\J_{\bar{s}}\subseteq \oj_\X$ generated by the image of
\[\bigoplus_{\lambda,\lambda'}\Li^\vee_{\lambda',\lambda}\xrightarrow{\sum_{\lambda,\lambda'}s^\vee_{\lambda,\lambda'}}\oj_S\,,\]
and
\[\bigoplus_{\lambda}\Ei^\vee_{\lambda}\xrightarrow{\sum_{\lambda}\varepsilon^\vee_{\lambda}}\oj_\X\]
respectively, where the first direct sum is over all $\lambda,\lambda'\in \Gamma(S,\A)$ such that $\lambda'_{\bar{s}}\,,\lambda_{\bar{s}}\neq 0$ and the second direct sum is over all $\lambda\in \Gamma(S,\A)$ such that $\lambda_{\bar{s}}\neq 0$.
We have an exact sequence
\[\bigoplus_{\lambda}\Ei^\vee_{\lambda}\to \oj_\X\to h_*\oj_{V(\J_{\bar{s}})}\to 0\]
and a commutative diagram \[\begin{tikzcd}\mathcal{G}_{\bar{s}}\ar{r}{j}\ar{d}{\hat{\pi}} & V(\J_{\bar{s}})\ar{r}{h}\ar{d}{p} & \X\ar{d}{\pi} \\ \spec k\ar{r}{i} & V(J_{\bar{s}})\ar{r} & S\,,\end{tikzcd}\]
where $p$ is a good moduli space. Note that the left diagram is Cartesian. This implies that we have a natural isomorphism
\[i^*p_*\simeq \hat{\pi}_*j^*\,,\] since $p$ is a good moduli space.

Since $\pi_*$ is exact, $\bar{s}^*$ is right exact, and $\F^\vee\otimes \Ei_\lambda\cong \HOM_{\oj_S}(\F, \Ei_\lambda)$ is a vector bundle, we get an exact sequence \[\bigoplus_{\lambda'}\pi_*(\Ei^\vee_{\lambda'}\otimes\F^\vee\otimes \Ei_\lambda)|_{\bar{s}}\to E_{\F^\vee,\lambda}|_{\bar{s}}\to \Hom_{\oj_{\mathcal{G}_{\bar{s}}}}(\F|_{\mathcal{G}_{\bar{s}}},\Ei_\lambda|_{\mathcal{G}_{\bar{s}}})\to 0\,.\]
It remains to find a criterion for when the morphism $\bigoplus_{\lambda'}\pi_*(\Ei^\vee_{\lambda'}\otimes\F^\vee\otimes \Ei_\lambda)|_{\bar{s}}\to E_{\F^\vee,\lambda}|_{\bar{s}}$ is not surjective.
We have that $\pi_*(\Ei_{\lambda'}^\vee\otimes\F^\vee\otimes\Ei_\lambda)=E_{\F^\vee,{e_\lambda}-e_{\lambda'}}$
and hence $\bigoplus_{\lambda'}\pi_*(\Ei^\vee_{\lambda'}\otimes\F^\vee\otimes \Ei_\lambda)|_{\bar{s}}\to E_{\F^\vee,\lambda}|_{\bar{s}}$ is surjective if and only if \[\bigoplus_{\lambda'}E_{e_\lambda-e_{\lambda'}}|_{\bar{s}}\xrightarrow{\sum_{\lambda'}E(e_{\lambda'})|_{\bar{s}}} E_{e_\lambda}|_{\bar{s}}\] is surjective. This completes the proof.
\end{proof}

\appendix

\section{Closed subgroups of groups of multiplicative type}\label{sec:groups}
In the following section we investigate the Cartier dual of a non-necessarily flat closed subgroup $H$ of a diagonalizable group $G=D(A)$. 

\begin{df}
If $K$ is a group scheme over $S$ the \emph{Cartier dual of K} is the functor \[D_S(K)=\HOM_{grp}(K,\GmS)\,.\]
Hence $D_S(K)$ is isomorphic to the functor \[T\mapsto \Gamma(K_T,\oj_{K_T})^{gr}=\{g\in \Gamma(K_T,\oj_{K_T}): g\mbox{ is group-like}\}\]
(see Remark \ref{rk:gp-like}).
We often ignore the subscript $S$ and write $D(K)=D_S(K)$.
We call a group homomorphism $\lambda\colon K\to \Gm_{m}$ a \emph{character}.
\end{df}

\begin{rk}
Note that $H\hookrightarrow G$ is a closed subgroup scheme if and only if
\begin{enumerate}
\item it is a closed subscheme and
\item $I=\ker(R[G]\to R[H])$ is a Hopf ideal, i.e., if $\varepsilon, \sigma, \Delta$ denotes the counit, coinverse and comultiplication respectively, we require that \[\begin{split}\varepsilon(I)&=0\,, \\ \sigma(I)&\subseteq I\,, \mbox{ and}\\ \Delta(I)&\subseteq I\otimes R[G]+R[G]\otimes I\,.\end{split}\]
\end{enumerate}
\end{rk}

\begin{rk}\label{rk:gp-like}
Recall that an element $g$ of a Hopf algebra is \emph{group-like} if $\Delta(g)=g\otimes g$ and $\varepsilon(g)=1$. We denote the subset of group-like elements of a Hopf algebra $E$ by $E^{gr}$. The set $E^{gr}$ forms a group under multiplication. Note that when $A$ is an abstract abelian group then $R[A]^{gr}=A$.
\end{rk}

\begin{rk}\label{rk:act-char}
Let $G$ be a group scheme over $S$ acting on a line bundle $\Li$ on $S$ (this also works when $S$ is a stack). This corresponds to a character $\lambda\colon G\to \AUT_{\oj_S}(\Li)\cong\Gm_{m,S}$ and we say that $G$ \emph{acts on $\Li$ via the character $\lambda$}.

To understand what happens, choose an \'etale covering $\{U_i\to S\}$ such that $\Li|_{U_i}$ is trivial for every $i$. The coaction
\[c_i\colon \oj_{U_i}\cong\Li|_{U_i}\to \Li_{U_i}\otimes \oj[G_{U_i}]\cong \oj_{U_i}\otimes \oj[G_{U_i}]\cong \oj[G_{U_i}]\] is completely determined by $c_i(1)$.
Using the axioms of a coaction we get that $\varepsilon(c_i(1))=1$ and $\Delta(c_i(1))=c_i(1)\otimes c_i(1)$. This means that $c_i(1)$ is group-like for every $i$. Since these $c_i$'s are restrictions of a global coaction
\[c\colon \Li\to \Li\otimes_{\oj_S}\oj[G]\]
we get that $c_i$ and $c_j$ agree over the intersection $U_i\times_SU_j$ and hence the $c_i(1)$'s glues to a group-like element in $\Gamma(G,\oj_G)$ corresponding to a character $\lambda\colon G\to \Gm_m$. It follows that the global coaction is given by $c(x)=x\otimes\lambda$ for every local section $x$ of $\Li$.
\end{rk}

\begin{ex}\label{ex:ram}
Let $(R,\m)$ be a local ring and $A$ a finite abelian group. We also let $\oj_X$ be an $R$-algebra with a coaction of $R[A]$ such that $X\to S$ is a ramified $D(A)$-cover, where $S=\spec R$ and $X=\spec \oj_X$.
Hence we may write $\oj_X\cong R[\{x_\lambda\}_{\lambda\in A}]/(\{x_\lambda x_{\lambda'}-s_{\lambda,\lambda'}x_{\lambda+\lambda'}\})$ where each $x_\lambda$ is a generator for the line bundle with character $\lambda$ (see Remark \ref{rk:lin-bun}).
Let $I$ be the Hopf ideal cutting out the stabilizer group $H=X\times_{X\times_SX}D_X(A)\hookrightarrow D_X(A)$ with respect to the action on $X$. Then $I$ is generated by $\{x_\lambda(\lambda-1)\}_{\lambda\in A}$.
\end{ex}


Now let $(R,\m)$ be a local ring, let $S=\spec R$, let $A$ be a finite abelian group, and let $H\hookrightarrow G:=D_S(A)$ be a closed subgroup cut out by a Hopf ideal $I$. 

\begin{lem}\label{lem:imp-lem}
If the order $d$ of $A$ is invertible in $S$, then
    \[
        R[H]^{gr}\cap (1+\m R[H])=\{1\}\,.
    \]
\end{lem}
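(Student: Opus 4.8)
The plan is to reduce the statement to the assertion that every group-like element of $R[H]$ is $d$-torsion, and then to exploit that $d$ is invertible.

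First I would record that every group-like element $g\in R[H]^{gr}$ satisfies $g^d=1$. Recall (Remark \ref{rk:gp-like}) that the group-like elements form a group under multiplication with unit $1$, and that $R[H]^{gr}=D(H)(R)$ is the group of characters $\chi\colon H\to\Gm_m$. Since $H\hookrightarrow D(A)$ is a closed subgroup, for every $R$-algebra $T$ the group $H(T)$ is a subgroup of $D(A)(T)=\HOM_{grp}(A,\Gamma(T,\oj_T)^\times)$, and this latter group is annihilated by $d=|A|$ because $A$ has exponent dividing $d$. Hence $H$ is killed by $d$, so for any character $\chi$ and any point $h\in H(T)$ we get $\chi(h)^d=\chi(h^d)=\chi(1)=1$; by Yoneda $\chi^d=1$, that is $g^d=1$. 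I would emphasize that this step uses nothing about flatness of $H$—only that $H$ embeds into the $d$-torsion group scheme $D(A)$—which is exactly what makes it available in the non-flat setting of this section.

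Next I would take a group-like $g\in R[H]^{gr}\cap(1+\m R[H])$ and write $g=1+m$ with $m\in\m R[H]$. Expanding the relation $g^d=1$ and factoring out $m$ yields $m\bigl(d+\tbinom{d}{2}m+\dots+m^{d-1}\bigr)=0$. The key observation is that $R[H]$ is module-finite over the local ring $(R,\m)$, being a quotient of the free module $R[A]$; therefore $\m R[H]$ lies in the Jacobson radical of $R[H]$. Combined with the hypothesis that $d$ is invertible in $R$, the cofactor $d+\tbinom{d}{2}m+\dots+m^{d-1}=d\bigl(1+(\text{element of }\operatorname{rad}R[H])\bigr)$ is a unit, so $m=0$ and $g=1$.

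The main obstacle is really the first step: because $H$ is possibly non-flat (even non-reduced) one cannot classify it as some $D(A/K)$, so the usual identification of the group-likes of $R[H]$ with restrictions of characters of $D(A)$ is not available; the functor-of-points argument that $H$ is $d$-torsion bypasses this difficulty entirely. The second step is then routine commutative algebra, the only point deserving care being the inclusion $\m R[H]\subseteq\operatorname{rad}R[H]$, which holds because a module-finite algebra over a local ring is integral over it, so each of its maximal ideals contracts to the maximal ideal $\m$.
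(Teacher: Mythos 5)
Your proof is correct and follows essentially the same route as the paper's: both reduce to showing $g^d=1$ and then factor $g^d-1$ as $(g-1)$ times an element congruent to $d$ modulo $\m R[H]\subseteq\operatorname{rad}R[H]$, hence a unit when $d$ is invertible. The only cosmetic difference is that you obtain $g^d=1$ from the functor-of-points observation that $H\subseteq D(A)$ is killed by $d$, whereas the paper gets it by iterating $\Delta(g)\equiv g\otimes g$ modulo $I$; these are two phrasings of the same fact.
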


\begin{proof}
Let $g$ be an element of $R[A]$ representing an element in $R[H]^{gr}\cap (1+\m R[H])$. The goal is to show that $g$ represents the element $1$ in $R[H]$, i.e., $g=1$ modulo $I$. Since $I\subseteq \ker \varepsilon$ and the ideal $\ker \varepsilon$ is generated by elements of the form $\lambda-1$, we get that $g$ must be of the form $g=1+\sum_{\lambda\in A\setminus\{0\}} a_\lambda(\lambda-1)\in R[A]$ where $a_\lambda\in \m$ for all $\lambda\in A\setminus\{0\}$.
Using that $\Delta(g)=g\otimes g$ modulo $I$ iteratively, we get that $g^d=1$ modulo $I$ and hence $(g-1)(1+g+g^2+\dots+g^{d-1})\in I$. But $1+g+g^2+\dots+g^{d-1}$ is a unit if $d$ is a unit and hence $g=1$ modulo $I$.
\end{proof}

\begin{prop}\label{prop:gp-like-iso}
If the order of $A$ is invertible in $S$,
then the quotient $\pi\colon R[H]\to R[H]\otimes_{R}k$ is an isomorphism on group-like elements.
\end{prop}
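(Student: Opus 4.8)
The map $\pi$ is the reduction $-\otimes_R k$, a surjective homomorphism of Hopf algebras over $R\to k$. Since a homomorphism of Hopf algebras carries group-like elements to group-like elements, and since the group-like elements of a commutative Hopf algebra form a group under multiplication with inverse given by the coinverse $\sigma$ (Remark \ref{rk:gp-like}), the map $\pi$ restricts to a homomorphism of abelian groups
\[
  \pi^{gr}\colon R[H]^{gr}\longrightarrow (R[H]\otimes_R k)^{gr}=k[H]^{gr}\,.
\]
The plan is to prove that $\pi^{gr}$ is injective and surjective separately: injectivity will use the hypothesis that $d=|A|$ is invertible, through Lemma \ref{lem:imp-lem}, while surjectivity will follow from the structure of diagonalizable groups over the residue field and does not need this hypothesis.

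For injectivity, suppose $g_1,g_2\in R[H]^{gr}$ satisfy $\pi(g_1)=\pi(g_2)$. Then $g:=g_1\sigma(g_2)$ is again group-like and $\pi(g)=\pi(g_1)\sigma(\pi(g_2))=1$, so $g-1\in\ker\pi=\m R[H]$, that is, $g\in R[H]^{gr}\cap(1+\m R[H])$. By Lemma \ref{lem:imp-lem} this intersection is trivial, hence $g=1$ and $g_1=g_2$. This is exactly where the invertibility of $d$ is essential.

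For surjectivity, the plan is to show that every group-like element of $k[H]$ already comes from $A$. Base changing to $k$, the closed immersion $H\hookrightarrow D_S(A)$ becomes a closed subgroup scheme $H_k\hookrightarrow D_k(A)$ of a diagonalizable group scheme over the \emph{field} $k$. Such a subgroup is again diagonalizable, with character group a quotient $A/L$ of $A$, so the reduction $k[A]\to k[A]/\bar I\cong k[A/L]\cong k[H]$ identifies $k[H]^{gr}$ precisely with the images $\bar\lambda$ of the elements $\lambda\in A$. Each such $\bar\lambda$ is the value of $\pi^{gr}$ on the group-like element $\lambda|_H\in R[H]^{gr}$, namely the class of $\lambda\in A\subseteq R[A]$ in $R[H]=R[A]/I$. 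Hence $\pi^{gr}$ is surjective, and together with injectivity it is an isomorphism.

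I expect the main obstacle to be this surjectivity step, or more precisely the claim that the quotient Hopf algebra $k[H]=k[A]/\bar I$ acquires no \emph{new} group-like elements beyond the images of $A$. This is the statement that a closed subgroup scheme of a diagonalizable group over a field is again diagonalizable; I would either cite this standard fact or verify it directly by passing to a separable closure, where $H_{\bar k}$ is the constant group scheme on a subgroup of $A$, and then descending using that the relevant character group carries the trivial Galois action, being a quotient of the trivial module $A$. By contrast, injectivity is immediate from Lemma \ref{lem:imp-lem}.
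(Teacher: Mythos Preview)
Your proof is correct and follows essentially the same approach as the paper: injectivity via Lemma \ref{lem:imp-lem}, and surjectivity by observing that $H_k$ is a closed subgroup of the diagonalizable group $D_k(A)$ over a field, hence $k[H]\cong k[A']$ for a quotient $A'$ of $A$, so every group-like element of $k[H]$ lifts through the surjection $A\subseteq R[A]^{gr}\to R[H]^{gr}\to k[H]^{gr}$. The paper phrases the surjectivity step via the commutative square $R[A]\to R[H]$, $R[A]\to k[A]\to k[A']$, but the content is identical to your element-by-element trace.
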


\begin{proof}
Injectivity follows immediately from Lemma \ref{lem:imp-lem}. To prove surjectivity, note that $R[H]\otimes_{R}k\cong k[A']$ where $A'$ is a quotient of $A$. We have a commutative diagram
  \[\begin{tikzcd}
    R[A]\ar{r}\ar{d} & R[H]\ar{d} \\
    k[A]\ar{r} & k[A']
  \end{tikzcd}\]
where all arrows are surjections. The map $R[A]\to k[A]$ is an isomorphism on group-like elements and $k[A]\to k[A']$ is a surjection on group-like elements. Thus $R[H]\to k[A']$ is a surjection on group-like elements and hence an isomorphism on group-like elements.
\end{proof}

\begin{rk}\label{rk:important-example}
If $|A|$ is not invertible, then Lemma \ref{lem:imp-lem} does not hold in general. Not even for ramified covers as in Example \ref{ex:ram}. For instance, consider the following example:
Let $S$ be the spectrum of $\oj_S=\FF_2[a,b,c]/J$ with $J=(a,b,c)^3+(ab+ac-bc)$. Then put $s = ab, t = ac$, and define $X$ to be the spectrum of 
    \[
        \oj_X=\oj_S[x_{10},x_{01},x_{11}]/(x_{10}^2-s, x_{01}^2-t, x_{11}^2-(s+t), x_{10}x_{01}-ax_{11}, x_{10}x_{11}-bx_{01}, x_{01}x_{11}-cx_{10})\,.
    \] 
This is a ramified $D(A)$-cover with $A=(\ZZ/2\ZZ)^2$, with weights $x_{10}\ :\ (1,0)$, $x_{01}\ :\ (0,1)$, $x_{11}\ :\ (1,1)$. The stabilizer group is the spectrum of $(\oj_X[T_{10},T_{01}]/(T_{10}^2-1, T_{01}^2-1))/I$ where 
    \[
        I=(x_{10}(T_{10}-1), x_{01}(T_{01}-1), x_{11}(T_{10}T_{01}-1))\,.
    \]
The element $g=1+t(T_{10}-1)$ is not equal to 1 (we used Macaulay 2 to check this) but group-like modulo $I$. Indeed, since 
    \[
        T_{\lambda}-1=(T_{\lambda-\lambda'}-1)(T_{\lambda'}-1)+(T_{\lambda-\lambda'}-1)+(T_{\lambda'}-1)\,,
    \] 
we have $x_{\lambda'}(T_\lambda-1)=x_{\lambda'}(T_{\lambda-\lambda'}-1)$ modulo $I$.
This implies that, modulo $I\otimes \oj_X[A]+\oj_X[A]\otimes I$, we have
    \[
        \begin{split}
            \Delta(g)-g\otimes g 
            & = \Delta(1+t(T_{10}-1))-(1+t(T_{10}-1))\otimes (1+t(T_{10}-1)) \\
            & = t(T_{10}-1)\otimes (T_{10}-1) \\
            & = (s+t)(T_{10}-1)\otimes (T_{10}-1) \\
            & = (s+t)(T_{01}-1)\otimes (T_{10}-1) \\
            & = t(T_{01}-1)\otimes (T_{10}-1)+(T_{01}-1)\otimes s(T_{10}-1) \\ 
            & = 0\,.
        \end{split}    
    \]
Hence $g$ is group-like. 
This says that the morphism $\oj_X[A]\to \oj_X[A]/I=\oj_X[H]$ is \emph{not surjective} on group-like elements. 
\end{rk}

\bibliographystyle{dary.bst}
\bibliography{references}

\end{document}